\title{\vspace{-2.5em}Unimodular Hyperbolic Triangulations: \\
   Circle Packing and Random Walk}
\author{Omer Angel \quad
   Tom Hutchcroft \quad
   Asaf Nachmias \quad
   Gourab Ray}
\date{}%{\small \today}
\crefname{theorem}{Theorem}{Theorems}
\crefname{thm}{Theorem}{Theorems}
\crefname{lemma}{Lemma}{Lemmas}
\crefname{lem}{Lemma}{Lemmas}
\crefname{remark}{Remark}{Remarks}
\crefname{prop}{Proposition}{Propositions}
\crefname{defn}{Definition}{Definitions}
\crefname{corollary}{Corollary}{Corollaries}
\crefname{conjecture}{Conjecture}{Conjectures}
\crefname{question}{Question}{Questions}
\crefname{chapter}{Chapter}{Chapters}
\crefname{section}{Section}{Sections}
\crefname{figure}{Figure}{Figures}
\theoremstyle{plain}
\newtheorem{thm}{Theorem}[section]
\newtheorem{lemma}[thm]{Lemma}
\newtheorem{lem}[thm]{Lemma}
\newtheorem{corollary}[thm]{Corollary}
\newtheorem{prop}[thm]{Proposition}
\newtheorem{conjecture}[thm]{Conjecture}
\theoremstyle{definition}
\newtheorem{problem}[thm]{Problem}
\theoremstyle{remark}
\newtheorem{remark}[thm]{Remark}
\numberwithin{equation}{section}
\renewcommand{\P}{\mathbb P}
\newcommand{\Z}{\mathbb Z}
\newcommand{\E}{\mathbb E}
\newcommand{\R}{\mathbb R}
\newcommand{\N}{\mathbb N}
\newcommand{\eps}{\varepsilon}
\newcommand{\core}{\operatorname{core}}
\newcommand{\VEL}{\operatorname{VEL}}
\newcommand{\D}{\mathbb D}
\renewcommand{\H}{\mathbb H}
\newcommand{\eqd} {\overset{d}{=}}
\newcommand{\cC}{\mathcal C}
\newcommand{\cI}{\mathcal I}
\newcommand{\C}{\mathbb C}
\renewcommand{\bar}{\overline}
\begin{document}
\maketitle
\vspace{-2em}
\begin{abstract}
  We show that the circle packing type of a unimodular random plane
  triangulation is parabolic if and only if the expected degree of the root
  is six, if and only if the triangulation is amenable in the sense of
  Aldous and Lyons \cite{AL07}.  As a part of this, we obtain an
  alternative proof of the Benjamini-Schramm Recurrence Theorem
  \cite{BeSc}.

  Secondly, in the hyperbolic case, we prove that the random walk almost
  surely converges to a point in the unit circle, that the law of this
  limiting point has full support and no atoms, and that the unit circle is
  a realisation of the Poisson boundary.  Finally, we show that the simple
  random walk has positive speed in the hyperbolic metric.
\end{abstract}

\begin{figure}[h!]
  \centering
  \includegraphics[width=0.7\textwidth]{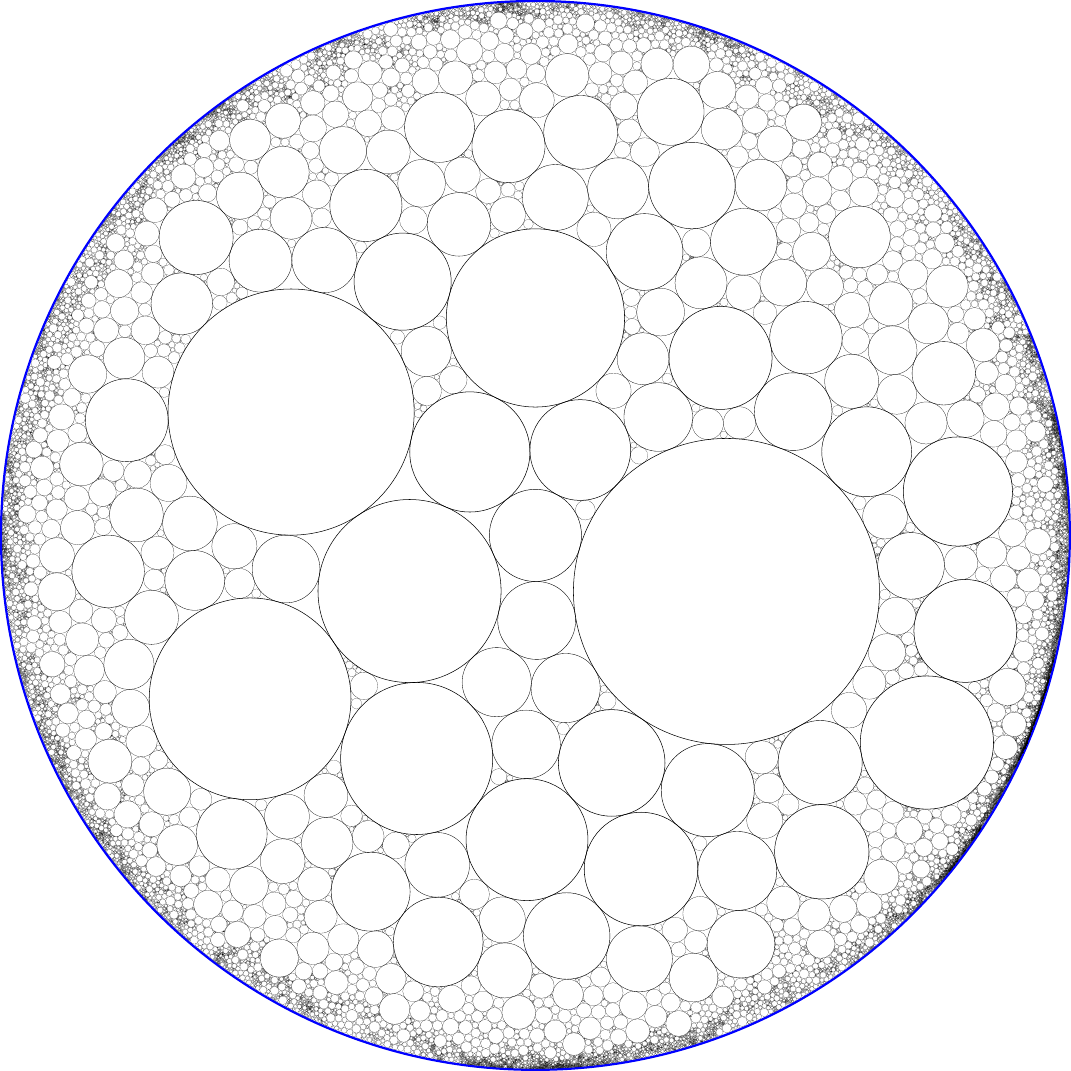}
  \caption{A circle packing of a random hyperbolic triangulation.}
\end{figure}

\thispagestyle{empty}

%%%%%%%%%%%%%%%%%%%%%%%%%%%%%%%%%%%%%%%%%%%%%%%%%%%%%%%%%%%%%%%%%%
\section{Introduction}\label{sec:intro}

A \textbf{circle packing} of a planar graph $G$ is a set of circles with
disjoint interiors in the plane, one for each vertex of $G$, such that two
circles are tangent if and only if their corresponding vertices are
adjacent in $G$.  The Koebe-Andreev-Thurston Circle Packing Theorem
\cite{K36,Th78} states that every finite simple planar graph has a circle
packing; if the graph is a triangulation (i.e.\ every face has three
sides), the packing is unique up to M\"obius transformations and
reflections.  He and Schramm \cite{HS93,HeSc} extended this theorem to infinite,
one-ended, simple triangulations, showing that each such triangulation
admits a locally finite circle packing either in the Euclidean plane or in
the hyperbolic plane (identified with the interior of the unit disc), but
not both. See \cref{subsec:CP_background} for precise details.
This result is a discrete analogue of the Uniformization Theorem,
which states that every simply connected, non-compact Riemann surface is
conformally equivalent to either the plane or the disc (indeed, there are deep
connections between circle packing and conformal maps, see
\cite{Rohde11, St05} and references therein).  Accordingly, a triangulation
is called \textbf{CP parabolic} if it can be circle packed in the plane and
\textbf{CP hyperbolic} otherwise.

Circle packing has proven instrumental in the study of random walks on
planar graphs \cite{BS96a,BeSc,HeSc,GGN13}.  For graphs with bounded
degrees, a rich theory has been established connecting the geometry of the
circle packing and the behaviour of the random walk. Most notably, a
one-ended, bounded degree triangulation is CP hyperbolic if and only if
random walk on it is transient \cite{HeSc} and in this case it is also
non-Liouville, i.e.\ admits non-constant bounded harmonic functions
\cite{BS96a}.

The goal of this work is to develop a similar, parallel theory for
\emph{random} triangulations.  Particular motivations come from the
Markovian hyperbolic triangulations constructed recently in \cite{AR13} and
\cite{PSHIT}.  These are hyperbolic variants of the UIPT \cite{UIPT1} and
are conjectured to be the local limits of uniform triangulations in high
genus.  Another example is the Poisson-Delaunay triangulation in the
hyperbolic plane, studied in \cite{BS00} and \cite{BPP14}.  All these
triangulations have unbounded degrees, rendering existing methods
ineffective (for example methods used in \cite{ABGN14,BS96a,HeSc}).

Indeed, in the absence of bounded degree the existing theory fails in many
ways.  For example, in a circle packing of a triangulation with bounded
degrees, radii of adjacent circles have uniformly bounded ratios (a fact
known as the Ring Lemma \cite{RS87}).  The absence of such a uniform
bound invalidates important resistance estimates. This is not a mere
technicality: one can add extra circles in the interstices of the circle
packing of the triangular lattice to give the random walk drift in
arbitrary directions. This does not change the circle packing type, but
allows construction of a graph that is CP parabolic but transient or even
non-Liouville. Indeed, the main effort in \cite{GGN13} was to overcome this
sole obstacle in order to prove that the UIPT is recurrent.

The hyperbolic random triangulations of \cite{PSHIT} and \cite{BPP14} make
up for having unbounded degrees by a different useful property:
\emph{unimodularity} (essentially equivalent to \emph{reversibility},
 see \cref{subsec:unimodularity,subsec:reversibility}).  This
allows us to apply probabilistic and ergodic arguments in place of the
analytic arguments appropriate to the bounded degree case.  Our first main
theorem establishes a probabilistic characterisation of the CP type for
unimodular random rooted triangulations, and connects it to the geometric
property of \emph{invariant (non-)amenability}, which we define in
\cref{subsec:invariant_expansion}.

\begin{thm}\label{thm:classification}
  Let $(G,\rho)$ be an infinite, simple, one-ended, ergodic unimodular
  random rooted planar triangulation.  Then either
  \begin{quote}
    $\E[\deg(\rho)]=6$, in which case $(G,\rho)$ is invariantly amenable
    and almost surely CP parabolic,
  \end{quote}
  or else
  \begin{quote}
    $\E[\deg(\rho)]>6$, in which case $(G,\rho)$ is invariantly
    non-amenable and almost surely CP hyperbolic.
  \end{quote}
\end{thm}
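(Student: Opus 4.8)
The plan is to use the invariant amenable/non-amenable dichotomy as the backbone of the proof. Since $(G,\rho)$ is ergodic it is either invariantly amenable or invariantly non-amenable, and I will show that the amenable case forces $\E[\deg(\rho)]=6$ together with CP parabolicity, while the non-amenable case forces $\E[\deg(\rho)]>6$ together with CP hyperbolicity. Two tools organize everything. The first is the combinatorial curvature $\kappa(v)=1-\deg(v)/6$, for which a discrete Gauss--Bonnet identity controls the total curvature $\sum_{v\in C}\kappa(v)$ of a finite simply connected region $C$ by its boundary length $|\partial C|$. The second is He--Schramm's characterization of CP type via vertex extremal length (VEL): the triangulation is CP parabolic precisely when the VEL from $\rho$ to infinity is infinite, a degree-free substitute for effective resistance. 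A preliminary mass-transport computation encoding Gauss--Bonnet gives the universal bound $\E[\deg(\rho)]\ge 6$, so that only the values $6$ and $>6$ are possible.

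For the amenable case, invariant amenability supplies, for each $\varepsilon>0$, an invariant percolation whose clusters are finite with expected boundary-to-volume ratio below $\varepsilon$. Gauss--Bonnet bounds $\bigl|\sum_{v\in C}(6-\deg(v))\bigr|$ on each cluster $C$ by a constant times $|\partial C|$; taking expectations and using the mass-transport principle to identify the cluster average of $6-\deg$ with $6-\E[\deg(\rho)]$, then letting $\varepsilon\to 0$, forces $\E[\deg(\rho)]=6$. Feeding the same F{\o}lner sequence into the metric given by the packing radii should make the VEL from $\rho$ to infinity infinite, hence CP parabolic. In the non-amenable case I would argue conversely: vanishing average curvature $\E[\deg(\rho)]=6$ together with unimodularity would let one construct invariant F{\o}lner sets, contradicting non-amenability, so with $\E[\deg(\rho)]\ge 6$ we obtain $\E[\deg(\rho)]>6$; and the positive invariant isoperimetric constant furnishes a finite-energy flow to infinity in the packing metric, giving finite VEL and hence CP hyperbolicity.

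The main obstacle is exactly the difficulty the paper advertises: controlling vertex extremal length in the absence of a Ring Lemma. With unbounded degrees the radii of adjacent circles need not be comparable, so the pointwise comparison between VEL and the F{\o}lner or isoperimetric data breaks down. The resolution should again come from unimodularity: rather than comparing radii vertex by vertex, one controls the relevant sums of squared radii and degree-weighted quantities in expectation through the mass-transport principle, so that the extremal-length estimates hold on average even across vertices of large degree. Carrying out this averaged comparison rigorously---in both the parabolic direction (showing VEL is infinite) and the hyperbolic direction (exhibiting a finite-energy flow)---is the heart of the matter; once these degree-free estimates are available, the Gauss--Bonnet bookkeeping and the He--Schramm reduction are comparatively routine.
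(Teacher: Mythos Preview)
Your Gauss--Bonnet/Euler bookkeeping for the relation between amenability and expected degree is in the right spirit and is close to one step of the paper's proof: the paper observes that every finite simple planar graph has average degree at most $6$, so $\alpha(G)\le 6$ and hence $\mathbf{i}^{inv}(G)=\E[\deg(\rho)]-\alpha(G)\ge \E[\deg(\rho)]-6$. This single line already gives ``$\E[\deg(\rho)]>6\Rightarrow$ invariantly non-amenable'' and, contrapositively, ``invariantly amenable $\Rightarrow \E[\deg(\rho)]\le 6$''. So your F{\o}lner/cluster argument for the degree part is fine but can be replaced by a one-liner.

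Where your plan diverges from the paper, and where it has a genuine gap, is in linking the CP type to everything else. You propose to attack VEL directly: in the amenable case, feed F{\o}lner sets into the packing-radius metric to make VEL infinite; in the non-amenable case, build a finite-energy flow from the invariant isoperimetric constant. You correctly identify that without a Ring Lemma these pointwise comparisons fail, and you suggest salvaging them ``on average'' via the mass-transport principle --- but you give no mechanism for this, and it is not clear one exists. In particular, the invariant Cheeger constant is a statement about \emph{expected} boundary ratios over percolations, not about any single exhaustion, so it does not hand you a flow; and on the parabolic side there is no obvious way to turn an invariant F{\o}lner sequence into a VEL-admissible metric.

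The paper bypasses both obstacles. For the link between CP type and expected degree it never touches VEL at all: it uses Schramm's rigidity to get a canonical drawing with hyperbolic (resp.\ Euclidean) geodesic edges, and runs a mass transport of the \emph{actual angles} in this drawing. Each vertex sends out exactly $6\pi$; in the hyperbolic case angle sums of triangles are strictly less than $\pi$, giving $\E[\deg(\rho)]>6$, while in the Euclidean case they equal $\pi$, giving $\E[\deg(\rho)]=6$. For ``invariantly non-amenable $\Rightarrow$ CP hyperbolic'' the paper invokes the Benjamini--Lyons--Schramm/Aldous--Lyons theorem (\cref{thm:BLS}): there is an ergodic percolation $\omega$ with $\mathbf{i}_E(\omega)>0$ and \emph{bounded degrees in $G$}. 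This $\omega$ is transient with bounded degree, hence VEL hyperbolic by classical He--Schramm, and VEL monotonicity pushes this up to $G$. The implication ``invariantly amenable $\Rightarrow$ CP parabolic'' is then obtained by contrapositive --- the paper never argues it directly. So the two ideas you are missing are the angle transport (which replaces your combinatorial Gauss--Bonnet with geometric Gauss--Bonnet and ties it directly to the CP type) and the use of \cref{thm:BLS} to reduce the VEL question to the bounded-degree setting.
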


This theorem can be viewed as a local-to-global principle for unimodular
triangulations. That is, it allows us to identify the circle packing type
and invariant amenability, both global properties, by calculating the
expected degree, a very local quantity.  For example, if $(G,\rho)$ is a
simple, one-ended triangulation that is obtained as a local limit of planar
graphs, then by Euler's formula and Fatou's lemma its average degree is at
most $6$, so that \cref{thm:classification} implies it is almost surely CP
parabolic. If in addition $(G,\rho)$ has bounded degrees, then it is
recurrent by He-Schramm \cite{HeSc}.  In particular, this gives an
alternative proof of the Benjamini-Schramm Recurrence Theorem \cite{BeSc}
in the primary case of a one-ended limit. We handle the remaining cases in
\cref{sec:BSproof}.  Unlike the
proof of \cite{BeSc}, whose main ingredient is a quantitative estimate for
finite circle packings \cite[Lemma 2.3]{BeSc},
our method works with infinite triangulations directly and implies the
following generalisation:

\begin{prop}\label{P:one_ended}
  Any unimodular, simple, one-ended random rooted planar triangulation
  $(G,\rho)$ with bounded degrees and $\E[\deg(\rho)]=6$ is almost surely
  recurrent.
\end{prop}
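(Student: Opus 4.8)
The plan is to deduce the proposition directly from \cref{thm:classification} together with the theorem of He and Schramm \cite{HeSc}, which asserts that a one-ended, simple, bounded-degree triangulation is recurrent if and only if it is CP parabolic. The only discrepancy between the hypotheses of \cref{thm:classification} and those of the proposition is the assumption of ergodicity, so the first step is to remove it by passing to the ergodic decomposition of $(G,\rho)$.

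I would therefore write the law of $(G,\rho)$ as a mixture $\int \mu \, d\nu(\mu)$ of ergodic unimodular random rooted triangulations. Simplicity, one-endedness and the uniform degree bound are all isomorphism-invariant properties, hence are inherited by $\nu$-almost every component $\mu$; moreover, since a finite graph is not one-ended, almost every $\mu$ is supported on infinite triangulations. Thus $\nu$-almost every $\mu$ satisfies all the hypotheses of \cref{thm:classification}. The crucial observation is that the dichotomy there forces $\E_\mu[\deg(\rho)] \ge 6$ for every such component, with equality exactly in the parabolic case. Since the expected degree averages over the decomposition,
\begin{equation}
  6 = \E[\deg(\rho)] = \int \E_\mu[\deg(\rho)] \, d\nu(\mu) \ge 6,
\end{equation}
and the integrand is everywhere at least $6$, it follows that $\E_\mu[\deg(\rho)] = 6$ for $\nu$-almost every $\mu$.

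With ergodicity now available, \cref{thm:classification} applies to each of these components and shows that it is almost surely CP parabolic. Consequently CP parabolicity, one-endedness, simplicity and the degree bound all hold for $\P$-almost every realisation of $(G,\rho)$, and since the He--Schramm characterisation \cite{HeSc} is a deterministic statement about a fixed triangulation, it may be applied pointwise to conclude that $(G,\rho)$ is almost surely recurrent. I expect the one substantive step to be the averaging argument above: it is precisely the ``$\E[\deg] \ge 6$'' content of \cref{thm:classification} that lets the global hypothesis $\E[\deg(\rho)] = 6$ be transferred to almost every ergodic component, after which recurrence follows immediately from the bounded-degree theory.
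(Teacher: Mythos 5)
Your proposal is correct and follows essentially the same route as the paper: apply \cref{thm:classification} to conclude CP parabolicity from $\E[\deg(\rho)]=6$, then invoke the He--Schramm theorem to pass from CP parabolicity to recurrence in the bounded-degree, one-ended, simple setting. The only difference is that you explicitly carry out the ergodic decomposition and the averaging argument showing $\E_\mu[\deg(\rho)]=6$ for almost every component (using that the dichotomy forces $\E_\mu[\deg(\rho)]\ge 6$), a routine step the paper leaves implicit; this is a correct and welcome piece of bookkeeping rather than a different proof.
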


This trivially extends the Benjamini-Schramm result, since any local limit
of finite planar graphs is unimodular.  An important open question is
whether every unimodular random graph is a Benjamini-Schramm limit of
finite graphs.  In a forthcoming paper \cite{unimodular2}, we show that any
unimodular planar graph $G$ is a limit of some sequence of finite graphs
$G_n$, and that if $G$ is a triangulation with $\E[\deg(\rho)]=6$ then
$G_n$ can also be taken to be planar.  In particular, any graph to which
\cref{P:one_ended} applies is also a local limit of finite planar graphs
with bounded degrees.  Consequently there are no graphs to which this
result applies and the Benjamini-Schramm Theorem does not.  Note however,
that for a given unimodular planar triangulation, it may not be obvious how
to find this sequence of graphs.  We remark that the dichotomy of
\cref{thm:classification} has many extensions, applying to more general
maps and holding further properties equivalent. We address these in
\cite{unimodular2}. See \cite{BeSt91} and \cite[Theorem 10.2]{HeSc} for
earlier connections between the CP type and degree distributions in the
deterministic setting.

Our method of proof relies on the deep theorem of Schramm \cite{Schramm91}
that the circle packing of a triangulation in the disc or the plane is
unique up to M\"obius transformations fixing the disc or the plane as
appropriate. We use this fact throughout the paper in an essential way: it
implies that any quantity derived from the circle packing in the disc or
the plane that is invariant to M\"obius transformations is determined by
the graph $G$ and not by our choice of circle packing.  Key examples of
such quantities are
angles between adjacent edges in the associated drawings with hyperbolic or
Euclidean geodesics (see \cref{sec:type}), hyperbolic radii of circles in
the hyperbolic case, and ratios of Euclidean radii in the parabolic case.

\paragraph{Boundary Theory.}
Throughout, we realize the hyperbolic plane as the Poincar\'e disc
$\{|z|<1\}$ with metric $d_{hyp}$.  The unit circle $\{|z|=1\}$ is the
boundary of the hyperbolic plane in several geometric and probabilistic
senses.  For a general graph embedded in the hyperbolic plane, the unit
circle may or may not coincide with probabilistic notions of the graph's
boundary.

When a bounded degree triangulation is circle packed in the disc, Benjamini
and Schramm \cite{BS96a} showed that the random walk converges to a point
in the circle almost surely and that the law of the limit point has full
support and no atoms.  More recently, it was shown by the first and third
authors together with Barlow and Gurel-Gurevich \cite{ABGN14} that the unit
circle is a realisation of both the Poisson and Martin boundaries of the
triangulation. Similar results regarding square tiling were obtained in
\cite{BS96b} and \cite{G13}.

Again, these theorems fail for some triangulations with unbounded degrees.
Starting with any CP hyperbolic triangulation, one can add circles in the
interstices of the packing so as to create drifts along arbitrary paths.
  In this way, one can force the random walk to spiral in the unit
disc and not converge to any point in the boundary. One can also create a
graph for which the walk can converge to a single boundary point from two or
more different angles each with positive probability, so that the exit
measure is atomic and the unit circle is no longer a realisation of the
Poisson boundary.  Our next result recovers the boundary theory in the
unimodular setting.

When $\cC$ is a circle packing of a graph $G$ in the disc $\D$, we write
$\cC=(z,r)$ where $z(v)$ is the (Euclidean) centre of the circle
corresponding to $v$, and $r(v)$ is its Euclidean radius.  Recall that the
hyperbolic metric on the unit disc is defined by
\[
|d_{hyp}(z)| = \frac{2|dz|}{1-|z|^2},
\]
and that circles in the Euclidean metric are also hyperbolic circles (with
different centres and radii).  We write $z_h(v)$ and $r_h(v)$ for the
hyperbolic centre and radius of the circle corresponding to $v$.  We use
$P_v^G$ and $E^G_v$ to
denote the probability and expectation (conditioned on $G$) with respect to
random walk $(X_n)_{n \geq 0}$ on $G$ started from a vertex $v$.

\begin{thm}\label{thm:boundary}
  Let $(G,\rho)$ be a simple, one-ended, CP hyperbolic unimodular random
  planar triangulation with $\E[\deg^2(\rho)]<\infty$. Let $\cC$ be a
  circle packing of $G$ in the unit disc, and let $(X_n)$ be a simple
  random walk on $G$.  The following hold conditional on $(G,\rho)$ almost
  surely:
  \begin{enumerate}
  \item\label{thm:convergence} $z(X_n)$ and $z_h(X_n)$ both converge to a
    (random) point denoted $\Xi \in\partial \D$,
  \item\label{thm:exitmeasure} The law of $\Xi$ has full support
    $\partial \D$ and no atoms.
  \item\label{thm:Poisson} $\partial \D$ is a realisation of the Poisson
    boundary of $G$.  That is, for every bounded harmonic function $h$ on
    $G$ there exists a bounded measurable function $g: \partial \D \to \R$
    such that
    \[
    h(v) = E^G_v [ g(\Xi)].
    \]
  \end{enumerate}
\end{thm}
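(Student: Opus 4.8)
The plan is to fix a circle packing $\cC=(z,r)$ of $G$ in $\D$ and to exploit the \emph{stationarity of the packing seen from the walk}. By reversibility of the random walk together with Schramm's rigidity theorem, the packing recentred by a Möbius automorphism of $\D$ carrying the circle of $X_n$ to the origin forms a stationary ergodic sequence; consequently every Möbius-invariant local observable of the packing is stationary, in particular the hyperbolic radius $r_h(X_n)$ and the hyperbolic increment $d_{hyp}(z_h(X_n),z_h(X_{n+1}))=r_h(X_n)+r_h(X_{n+1})$ across each traversed tangency. The moment hypothesis $\E[\deg^2(\rho)]<\infty$ enters by guaranteeing the integrability (such as $\E[r_h(\rho)]<\infty$) needed to run ergodic-theorem arguments on these sequences.

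For part~(\ref{thm:convergence}), \cref{thm:classification} provides invariant non-amenability, hence transience and $|z(X_n)|\to 1$; the issue is to exclude spiralling. I would do this through a positive-speed estimate in the hyperbolic metric, which I would isolate as a lemma: the increments $r_h(X_n)+r_h(X_{n+1})$ are stationary and integrable, and invariant non-amenability precludes the degenerate zero-speed regime, giving $d_{hyp}(z_h(X_0),z_h(X_n))\ge \ell n$ for some $\ell>0$. After normalising so that the root circle is centred at the origin, this makes $1-|z(X_n)|$ decay exponentially in $n$. Since any circle inside $\D$ satisfies $r(X_n)\le 1-|z(X_n)|$ and tangency gives $|z(X_{n+1})-z(X_n)|\le r(X_n)+r(X_{n+1})$, we obtain $\sum_n|z(X_{n+1})-z(X_n)|<\infty$ almost surely, so $z(X_n)$ is Cauchy and converges to some $\Xi\in\partial\D$; the same estimate forces $z_h(X_n)\to\Xi$.

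For part~(\ref{thm:exitmeasure}), I would prove non-atomicity using two independent walks $(X_n),(X_n')$ from $\rho$ with limits $\Xi,\Xi'$: an atom of the exit measure $\mu_\rho$ (the law of $\Xi$ under $P_\rho^G$) would force $\P(\Xi=\Xi')=\E\big[\sum_{\xi}\mu_\rho(\{\xi\})^2\big]>0$, which I would contradict by an ergodic and mass-transport argument showing the two walks almost surely separate — once both are deep near $\partial\D$, the finite-length estimate of part~(\ref{thm:convergence}) confines them to disjoint boundary neighbourhoods. Full support I would obtain from ergodicity together with the rotational symmetry (in law) of the recentred packing: this reduces the claim to showing that each boundary arc is charged with positive probability, which holds because the He--Schramm packing accumulates on all of $\partial\D$ and, by transience, the walk converges into a neighbourhood of any accumulation point with positive probability.

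For part~(\ref{thm:Poisson}), the point $\Xi$ is tail-measurable, so $\partial\D$ is a $\mu$-boundary; to identify it as the full Poisson boundary I would invoke the criterion that a boundary to which the walk converges is maximal once the exit measures converge to point masses, i.e.\ $\mu_{X_n}\Rightarrow\delta_\Xi$ almost surely, where $\mu_{X_n}$ is the law of $\Xi$ for a fresh walk from $X_n$. The martingale $(\mu_{X_n})$ converges to some $\mu_\infty$, and it remains to show $\mu_\infty=\delta_\Xi$: as $z(X_n)\to\Xi$, any arc $A$ omitting $\Xi$ is separated from $z(X_n)$ by a hyperbolic annulus of growing width, so a fresh walk from $X_n$ must cross the corresponding cutset of circles to exit in $A$, whence $\mu_{X_n}(A)\to 0$. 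The representation $h(v)=E_v^G[g(\Xi)]$ for bounded harmonic $h$ then follows from the criterion. I expect the principal difficulties to be part~(\ref{thm:convergence}) — converting the stationary \emph{hyperbolic} picture into summable \emph{Euclidean} displacements in the absence of the Ring Lemma, the positive-speed lemma being the real work — and the harmonic-measure concentration estimate $\mu_{X_n}(A)\to 0$ of part~(\ref{thm:Poisson}); both must be driven by unimodularity and the second-moment degree bound rather than by any uniform geometric control.
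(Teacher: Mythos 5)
Your proposal has genuine gaps in all three parts, and in each case the missing step is precisely where the paper's real work lies. For part~(1), you reduce convergence to a ``positive-speed lemma'' asserting that invariant non-amenability forces $d_{hyp}(z_h(X_0),z_h(X_n))\geq \ell n$, but you give no mechanism for this: Kingman's theorem gives existence of the limit, not its positivity, and stationarity of the hyperbolic increments $r_h(X_n)+r_h(X_{n+1})$ says nothing about cancellation from backtracking. Worse, the logical order is reversed relative to the paper: positive hyperbolic speed is \cref{thm:exp_decay}, whose proof \emph{uses} the boundary theorem (the half-plane packing normalised by the two distinct limit points $\Xi_\pm$, which requires non-atomicity). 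The paper instead proves convergence via \cref{lem:exp_decay}: \cref{thm:BLS} supplies a bounded-degree non-amenable percolation $\omega$, the induced walk on $\omega$ has spectral radius $<1$, and the bound ``total area of all circles $\leq\pi$'' converts the heat-kernel decay into exponential decay of the \emph{Euclidean} radii $r(X_n)$, whence the path of centres has finite length. Nothing in your sketch substitutes for this. (Your assumed integrability $\E[r_h(\rho)]<\infty$ is also unjustified; the paper only ever controls \emph{ratios} of radii via the Sharp Ring Lemma.)

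For part~(2), the two-walk argument is circular: if the exit measure had an atom of mass $p$, two independent walks would converge to the same point with probability at least $p^2$, so ``the two walks almost surely separate'' is equivalent to what you are trying to prove, not a tool for proving it. The paper's route is different: \cref{lem:atom} shows by stationarity of the maximal atom weight $M(G,X_n)$ and L\'evy's $0$--$1$ law that the exit measure is either non-atomic or a single point mass, and the single-atom case is killed by mapping the atom to $\infty$, obtaining a half-plane packing canonical up to $z\mapsto az+b$, and running the angle mass transport of \cref{thm:classification} to force $\E[\deg(\rho)]=6$, contradicting hyperbolicity. Your full-support argument via ``rotational symmetry in law of the recentred packing'' also does not work: the rotation is a gauge freedom of the packing, not a symmetry of the conditional law of $\Xi$ given $(G,\rho)$, and averaging over it cannot yield almost-sure full support; the paper instead transports mass across the hyperbolic geodesics spanning the complementary arcs of the support and derives a vertex receiving infinite mass. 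Finally, for part~(3) the criterion you invoke is vacuous: $\mu_{X_n}(A)=P^G_{X_n}(\Xi\in A)=P(\Xi\in A\mid \mathcal F_n)\to\mathbbm{1}(\Xi\in A)$ holds by L\'evy's $0$--$1$ law for \emph{any} compactification to which the walk converges (e.g.\ the one-point compactification of a transient non-Liouville graph), so $\mu_{X_n}\Rightarrow\delta_\Xi$ cannot certify maximality. The paper's proof of part~(3) genuinely needs the doubly-infinite walk, \cref{lem:Poissonlem} ($P^G_{X_n}(\text{hit }\{X_{-1},X_{-2},\dots\})\to 0$, itself a consequence of non-atomicity), and the argument that a fresh walk from $X_n$ hits a prescribed path approaching $\Xi_+\in B$ with probability $\geq 1/4$ infinitely often, so that the Optional Stopping Theorem and L\'evy's law force $h(X_n)\to 1$.
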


We refer to the law of $\Xi$ conditional on $(G,\rho)$ as the \textbf{exit
  measure} from $v$.  In \cref{sec:extensions} we extend this result to
weighted and non-simple triangulations, with the obvious changes.  One
ingredient in the proof of the absence of atoms is a more general
observation, \cref{lem:atom}, which states roughly that exit measures on
boundaries of stationary graphs are either non-atomic or trivial almost
surely.

Our final result relates exponential decay of the Euclidean radii along the
random walk to speed in the hyperbolic metric.

\begin{thm}\label{thm:exp_decay}
  Let $(G,\rho)$ be a simple, one-ended, CP hyperbolic unimodular random
  rooted planar triangulation with $\E[\deg^2(\rho)]<\infty$ and let $\cC$
  be a circle packing of $G$ in the unit disc.  Then almost surely
  \[
  \lim_{n \to \infty} \frac{d_{hyp}(z_h(\rho),z_h(X_n))}{n}
  = \lim_{n \to \infty} \frac{-\log r(X_n)}{n}
  > 0.
  \]
  In particular, both limits exist.  Moreover, the limits do not depend on
  the choice of packing, and if $(G,\rho)$ is ergodic then this limit is an
  almost sure constant.
\end{thm}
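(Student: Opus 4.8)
The plan is to reduce both limits to the single quantity $d_{hyp}(0,z_h(X_n))$ (where $0$ is the centre of $\D$), prove existence by subadditivity, and finally prove positivity using the boundary theory of \cref{thm:boundary}. Passing to the stationary (degree-biased, reversible) version of the walk, the sequence of rooted graphs $(G,X_n)$ is stationary; since the hyperbolic radius $r_h(v)$ is a function of $(G,v)$ alone (by Schramm's rigidity theorem, it is M\"obius invariant), $(r_h(X_n))_n$ is a stationary sequence. A direct hyperbolic computation for a circle of hyperbolic radius $r_h$ whose hyperbolic centre lies at distance $D=d_{hyp}(0,z_h)$ from the origin gives
\[
r=\frac{\sinh r_h}{\cosh D+\cosh r_h},
\qquad\text{hence}\qquad
-\log r=\max(D,r_h)-\log\sinh r_h+O(1).
\]
As $z_h(\rho)$ is a fixed point, $d_{hyp}(z_h(\rho),z_h(X_n))=d_{hyp}(0,z_h(X_n))+O(1)$, so
\[
\frac{-\log r(X_n)}{n}
=\frac{d_{hyp}(z_h(\rho),z_h(X_n))}{n}
+\frac{\max(D_n,r_h(X_n))-D_n}{n}
-\frac{\log\sinh r_h(X_n)}{n}+o(1).
\]
Because $(r_h(X_n))$ is stationary with $\E[r_h(\rho)]<\infty$ and $\E|\log\sinh r_h(\rho)|<\infty$ — integrability that follows from $\E[\deg^2(\rho)]<\infty$ via the degree–radius estimates already used for \cref{thm:boundary} — a Borel–Cantelli argument gives $\tfrac1n r_h(X_n)\to0$ and $\tfrac1n\log\sinh r_h(X_n)\to0$ almost surely. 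Thus the two correction terms vanish and the two limits coincide as soon as one exists.

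To show the limit exists, note that adjacent circles are tangent, so the geodesic between their centres passes through the tangency point and $d_{hyp}(z_h(u),z_h(v))=r_h(u)+r_h(v)$ for $u\sim v$; in particular $\E[d_{hyp}(z_h(X_0),z_h(X_1))]=2\E[r_h(\rho)]<\infty$. The array $a_{m,n}:=d_{hyp}(z_h(X_m),z_h(X_n))$ is subadditive by the triangle inequality and stationary by stationarity of the walk, so Kingman's subadditive ergodic theorem yields the almost sure existence of $\ell:=\lim_n a_{0,n}/n$. Since $d_{hyp}(z_h(\rho),z_h(X_n))$ is a M\"obius-invariant functional of $(G,\rho)$, the value $\ell$ does not depend on the choice of packing, and it is an invariant random variable, hence an almost sure constant when $(G,\rho)$ is ergodic.

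It remains to prove $\ell>0$, which is the crux. By \cref{thm:convergence} the walk converges to $\Xi\in\partial\D$, by \cref{thm:exitmeasure} the exit measure $\nu$ is non-atomic, and by \cref{thm:Poisson} the circle $\partial\D$ realises the Poisson boundary; in particular the boundary is non-trivial. Since the one-step entropy is finite ($\E[\log\deg(\rho)]<\infty$ from the moment hypothesis), the entropy criterion for stationary walks forces the Avez entropy to be strictly positive, $h>0$. I relate $h$ to $\ell$ through the geometry of shadows. The circle at $X_n$ casts a boundary shadow $S_n\subset\partial\D$, an interval of length $w_n\asymp r(X_n)$ containing $\Xi$ for all large $n$; by the first stage $-\tfrac1n\log w_n\to\ell$. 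A Shannon–McMillan–Breiman statement for the walk and its boundary gives $-\tfrac1n\log\nu(S_n)\to h$. Comparing $\nu(S_n)\approx e^{-hn}$ with the scale $w_n\approx e^{-\ell n}$ and interpolating over $\eps\in(w_{n+1},w_n]$ shows the lower local dimension of $\nu$ at $\Xi$ equals $h/\ell$. But a measure on $\partial\D$ has lower local dimension at most $1$ at almost every point: by the mass distribution principle a set on which $\nu(B(\cdot,\eps))\le\eps^{s}$ for all small $\eps$ has Hausdorff dimension at least $s$, which is impossible on the circle when $s>1$ and $\nu$ charges the set. Since $\Xi$ has law $\nu$, we conclude $h/\ell\le1$, i.e.\ $\ell\ge h>0$.

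The main obstacle is this positivity step, and within it the Shannon–McMillan–Breiman estimate $-\tfrac1n\log\nu(S_n)\to h$ for the exit measure of the \emph{graph} random walk — not the hyperbolic harmonic measure, to which $\nu$ need not be comparable. Establishing it requires the entropy theory of the walk on the unimodular triangulation together with the Poisson boundary identification of \cref{thm:boundary}, the geometric comparison $w_n\asymp r(X_n)$, and the containment $\Xi\in S_n$. An alternative route to $\ell>0$ is to invoke \cref{thm:classification}, by which CP hyperbolicity gives invariant non-amenability and hence positive speed in the graph metric; but converting graph-metric speed to hyperbolic speed again requires controlling the distribution of $r_h$, so one still relies on the stationarity and degree–radius estimates used above.
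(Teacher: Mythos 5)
Your outline contains two genuine gaps, one of which is fatal as written. The decisive one is the positivity step. You reduce $\ell>0$ to a Shannon--McMillan--Breiman statement $-\tfrac1n\log\nu(S_n)\to h$ for the exit measure evaluated on geometric shadows of the circles, and you correctly flag this as the main obstacle --- but you do not prove it, and it is not a citable off-the-shelf fact: it is in effect a dimension--entropy--speed formula for the harmonic measure of a circle-packed unimodular triangulation, a result of comparable depth to the theorem itself. (Positive Avez entropy for stationary random graphs is available, but transferring entropy to $\nu$-measures of concrete boundary intervals at the correct scale is exactly the hard part.) The paper gets positivity essentially for free: \cref{lem:exp_decay}, proved earlier from invariant non-amenability (\cref{thm:classification} and \cref{thm:BLS}) via an induced walk with spectral radius less than one and a total-area argument, already gives $\limsup_n \tfrac1n\log r(X_n)<0$ almost surely. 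Once the limit of $-\tfrac1n\log r(X_n)$ is known to exist, that lemma immediately makes it positive; the entire entropy apparatus is unnecessary.

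The second gap is the integrability of the hyperbolic radii. Both your identification of the two limits (through $-\log r=\max(D,r_h)-\log\sinh r_h+O(1)$, which is a correct formula) and your appeal to Kingman (which needs $\E[d_{hyp}(z_h(X_0),z_h(X_1))]=2\E[r_h(\rho)]<\infty$) require $\E[r_h(\rho)]<\infty$ and, worse, $\E\big[|\log\sinh r_h(\rho)|\big]<\infty$. You attribute these to ``degree--radius estimates already used for \cref{thm:boundary}'', but no such estimates appear there: \cref{lem:sharp_ring} controls only ratios of radii of adjacent circles. The upper bound $r_h(v)\le\log\deg(v)+O(1)$ can be extracted from the flower-angle condition, but the lower tail of $r_h(\rho)$ --- needed because $-\log\sinh r_h$ blows up for tiny circles --- is not controlled by any local quantity and is left unproved. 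The paper's route avoids absolute radius bounds entirely: for existence it normalises a half-plane packing by the two walk limit points $\Xi_\pm$ so that the ratios $\hat r(X_n)/\hat r(X_{n-1})$ form a stationary ergodic sequence (Birkhoff then applies, with integrability coming from the Ring Lemma alone), and for the comparison with hyperbolic distance it sandwiches $1-|z_h(X_n)|$ between $r(X_n)e^{-C\deg(X_n)}$ and $\sum_{i\ge n}2r(X_i)$. Your Kingman argument for existence is an attractive, genuinely different alternative, but it only becomes a proof once the $r_h$ integrability is supplied and the positivity step is replaced by (or reduced to) \cref{lem:exp_decay}.
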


Thus the random walk $(X_n)$ has positive asymptotic speed in the
hyperbolic metric, the Euclidean radii along the walk decay exponentially,
and the two rates agree.

%\tableofcontents

\paragraph{Organization of the paper.}
In \cref{sec:examples} we review the motivating examples of unimodular
hyperbolic random triangulations to which our results apply.
In \cref{subsec:unimodularity} and \cref{subsec:reversibility} we give
background on unimodularity, reversibility and related topics.
In \cref{subsec:invariant_expansion} we recall Aldous and Lyons's notion of
invariant amenability \cite{AL07} and prove one of its important consequences.
In \cref{subsec:CP_background} we recall the required results on circle
packing and discuss measurability.
\cref{sec:type} contains the proof of \cref{thm:classification} as well as
a discussion of how to handle the remaining (easier) cases of the
Benjamini-Schramm Theorem.
\cref{thm:boundary} is proved in \cref{sec:boundarytheory} and
\cref{thm:exp_decay} is proved in \cref{sec:exp_decay}.
Background on the Poisson boundary is provided before the proof of
\cref{thm:boundary}(3) in \cref{subsec:poisson}.
In \cref{sec:extensions} we discuss extensions of our results to non-simple
and weighted triangulations.
We end with some open problems in \cref{sec:problems}.

%%%%%%%%%%%%%%%%%%%%%%%%%%%%%%%%%%%%%%%%%%%%%%%%%%%%%%%%%%%%%%%%%%%%%%%%
\section{Examples}
\label{sec:examples}

Benjamini-Schramm limits of random maps have been objects of great interest
in recent years, serving as discrete models of $2$-dimensional quantum
gravity.  Roughly, the idea is to consider a uniformly random map from some
class of rooted maps (e.g.\ all triangulations or quadrangulations of the
sphere of size $n$) and take a local limit as the size of the maps tends to
infinity.  The first such construction was the UIPT \cite{UIPT1}; see
also \cite{UIPT2,BC11,CM12,AC13,Unilocal13}.

\paragraph{Curien's PSHT.}
Recently, hyperbolic versions of the UIPT and related maps have been
constructed: half-plane versions in \cite{AR13} and full-plane versions in
\cite{PSHIT}.  These are constructed directly, and are believed but not yet
known to be the limits of finite maps (see below).  The full plane
triangulations form a one (continuous) parameter family $\{T_\kappa
\}_{\kappa\in (0,2/27)}$ (known as the PSHT, for \emph{Planar Stochastic
  Hyperbolic Triangulation}).  They are reversible and ergodic, have
anchored expansion and are therefore invariantly non-amenable.  The degree
of the root in $T_\kappa$ is known to have an exponential tail, so that
all of its moments are finite.
% Applying \cref{thm:coreboundary}, we obtain a characterisation of the
% Poisson boundary of $T_\kappa$ as the geometric boundary of the circle
% packing of $\overline{\core}(T_\kappa)$ in $\D$ (see
% \cref{sec:extensions}).
These triangulations are not simple, so our main results do not apply to
them directly, but by considering their \emph{simple cores} we are still
able to obtain a geometric representation of their Poisson boundary
(see \cref{sec:extensions}).

\paragraph{Benjamini-Schramm limits of maps in high genus.}
It is conjectured that the PSHT $T_\kappa$ is the Benjamini-Schramm limit
of the uniform triangulation with $n$ vertices of a surface of genus
$\lfloor \theta n\rfloor$, for some $\theta=\theta(\kappa)$ (see e.g.\
\cite{Rayphd14} for precise definitions of maps on general surfaces). In
our upcoming paper \cite{unimodular2} we prove that all one-ended
unimodular random rooted planar triangulations are also Benjamini-Schramm
limits of finite triangulations. If the triangulation has expected degree
greater than 6, then the finite approximating triangulations necessarily
have genus linear in their size.

In the context of circle packing, it may be particularly interesting to
take the Benjamini-Schramm limit $(T,\rho)$ of the uniform \emph{simple}
triangulation with $n$ vertices of the $\lfloor \theta n \rfloor$-holed
torus $T_n$.  This limit (which we conjecture exists) should be a simple
variant of the PSHT.  Letting $\rho_n$ be a uniformly chosen root of $T_n$,
it should also be the case that $\E[\deg(\rho_n)] \to \E[\deg(\rho)] > 6$
and $\E[\deg(\rho)^2]<\infty$, so that our results would be applicable to
the circle packing of $(T,\rho)$.

\paragraph{Delaunay triangulations of the hyperbolic plane.}
Start with a Poisson point process in the hyperbolic plane with intensity
$\lambda$ times the hyperbolic area measure, and add a root point at the
origin.  Consider now the \textbf{Delaunay triangulation} with this point
process as its vertex set, where three vertices $u,v,w$ form a triangle if
the circle through $u,v,w$ contains no other points of the process.  This
triangulation, known as the Poisson-Delaunay triangulation, is naturally embedded in the hyperbolic plane with hyperbolic
geodesic edges.  These triangulations, studied in \cite{BS00,BPP14}, are
unimodular when rooted at the point at the origin.  They are known to have
anchored expansion \cite{BPP14} and are therefore invariantly non-amenable.
(We also get a new
proof of non-amenability from \cref{thm:classification}, as one can show
the expected degree to be greater than six by transporting angles as in
the proof of \cref{thm:classification}.)  The Poisson-Delaunay
triangulations are also simple and one-ended, and the degree of the root has
finite second moment, so that our results apply directly to their circle
packings.

%%%%%%%%%%%%%%%%%%%%%%%%%%%%%%%%%%%%%%%%%%%%%%%%%%%%%%%%%%%%%%%%%%%%%%%
\section{Background and Definitions}

\subsection{Unimodular random graphs and maps}
\label{subsec:unimodularity}

Unimodularity of graphs (both fixed and random) has proven to be a useful
and natural property in a number of settings.  We give here the required
definitions and some of their consequences, and refer the reader to
\cite{AL07,LP:book} for further background.

A \textbf{rooted graph} $(G,\rho)$ is a graph $G=(V,E)$ with a
distinguished vertex $\rho$ called the \textbf{root}. We will allow our
graphs to contain self-loops and multiple edges, and refer to graphs
without either as \textbf{simple}. A graph is said to be \textbf{one-ended} if
the removal of any finite set of vertices leaves precisely one infinite
connected component. A graph isomorphism between two rooted graphs is a
rooted graph isomorphism if it preserves the root.

A \textbf{map} is a proper (that is, with non-intersecting edges) embedding
of a connected graph into a surface, viewed up to orientation preserving
homeomorphisms of the surface, so that all connected components of the
complement (called \textbf{faces}) are topological discs.\footnote{There is
  an additional constraint regarding boundaries of faces of infinite
  degree.  However, this condition is automatically satisfied for
  triangulations and for simply connected maps, so that we need not worry
  about it in this paper.} The map is \textbf{planar} if the surface is
homeomorphic to an open subset of the sphere, and is \textbf{simply
  connected} if the surface is homeomorphic to the sphere or the plane.  A
map is a \textbf{triangulation} if every face is incident to exactly three
edges.  Note that an infinite planar triangulation is simply connected if and only if
it is one-ended.

Every connected graph $G$ can be made into a metric space by endowing it with
the shortest path metric $d_G$.  By abuse of notation, we use the ball
$B_n(G,u)$ to refer both to the set of vertices $\{v \in V: d_G(u,v) \leq
n\}$ and the induced subgraph on this set, rooted at $u$. The balls in a
map inherit a map structure from the full map.

The \textbf{local topology} on the space of rooted connected graphs
(introduced in \cite{BeSc}) is the topology induced by the metric
\[
d_{\mathrm{loc}}\big((G,\rho),(G',\rho')\big) = e^{-R}
\quad \text{ where } \quad
R = \sup \big\{ n\geq 0 : B_n(G,\rho) \cong B_n(G',\rho')\big\}.
\]
The local topology on rooted maps is defined similarly by requiring the
isomorphism of the balls to be an isomorphism of rooted maps.  We denote by
$\mathcal{G}_\bullet$ and $\mathcal{M}_\bullet$ the spaces of isomorphism
classes of rooted connected graphs and of maps with their respective local
topologies.  \textbf{Random rooted graphs and maps} are Borel random
variables taking values in these spaces.

Several variants of these spaces will also be of use.  A (countably)
\textbf{marked} graph is a graph together with a mark function $m:V \cup E
\to M$ which gives every edge and vertex a mark in some countable set
$M$. A graph isomorphism between marked graphs is an isomorphism of
marked graphs if it preserves the marks.  The local topologies on rooted
marked graphs and maps is defined in the obvious way.  These spaces are
denoted $\mathcal{G}^M_\bullet$ and $\mathcal{M}^M_\bullet$.  Sometimes we
will consider maps with marks only on vertices or only on edges; these fit
easily into our framework. Marked graphs are special cases of what Aldous
and Lyons \cite{AL07} call \textbf{networks}, for which the marks may take
values in any separable complete metric space.

Similarly, we define $\mathcal{G}_{\bullet\bullet}$ (resp.\
$\mathcal{M}_{\bullet\bullet}$) to be the spaces of doubly rooted (that is,
with a distinguished ordered pair of vertices) connected graphs (resp.\ maps)
$(G,u,v)$.  These spaces, along with their marked versions, are equipped
with natural variants of the local topology.  All such spaces we consider
are Polish.

A \textbf{mass transport} is a non-negative Borel function $f :
\mathcal{G}_{\bullet \bullet} \to \R_+$.  A random rooted graph
$(G,\rho)$ is said to be \textbf{unimodular} if it satisfies the
\textbf{mass transport principle}: for any mass transport $f$,
\[
\E\Bigg[\sum_{v \in V(G)} f(G,\rho,v)\Bigg] =  \E\Bigg[\sum_{v \in V(G)}
  f(G,v,\rho)\Bigg].
\]
In other words,
\begin{center}
  `Expected mass out equals expected mass in.'
\end{center}
This definition generalises naturally to define unimodular marked graphs
and maps. Importantly, any finite graph $G$ with a uniformly chosen root
vertex $\rho$ satisfies the mass transport principle.

The laws of unimodular random rooted graphs form a weakly closed, convex
subset of the space of probability measures on $\mathcal{G}_\bullet$, so
that weak limits of unimodular random graphs are unimodular. In particular,
a weak limit of finite graphs with uniformly chosen roots is unimodular:
such a limit of finite graphs is referred to as a \textbf{Benjamini-Schramm
  limit}.  It is a major open problem to determine whether all unimodular
random rooted graphs arise as Benjamini-Schramm limits of finite graphs
\cite[\S 10]{AL07}. As mentioned in \cref{sec:examples}, we provide a
positive solution to this problem in the planar case in the upcoming work
\cite{unimodular2}, proving that every simply connected unimodular random
rooted planar map is a Benjamini-Schramm limit of finite maps.

A common use of the mass transport principle to obtain proofs by
contradiction is the following. If $(G,\rho)$ is a unimodular random rooted
graph and $f$ is a mass transport such that the mass sent out from each
vertex $\sum_vf(G,u,v) \leq M$ is uniformly bounded almost surely, then
almost surely there are no vertices that receive infinite mass: if vertices
receiving infinite mass were to exist with positive probability, the root
would be such a vertex with positive probability \cite[Lemma 2.3]{AL07},
contradicting the mass transport principle.

\subsection{Random walk, reversibility and ergodicity}
\label{subsec:reversibility}

Recall that the \textbf{simple random walk} on a graph is the Markov chain
that chooses $X_{n+1}$ from among the neighbours of $X_n$ weighted by the number of shared edges.
Define $\mathcal{G}_\leftrightarrow$ (resp.\ $\mathcal{M}_\leftrightarrow$)
to be spaces of isomorphism classes of graphs (resp.\ maps) equipped with a
bi-infinite path $(G,(x_n)_{n \in \Z})$, which we endow with a natural variant of the local topology.  When $(G,\rho)$ is a random graph
or map, we let $(X_n)_{n\geq 0}$ and $(X_{-n})_{n\geq 0}$ be two
independent simple random walks started from $\rho$ and consider
$(G,(X_n)_{n \in \Z})$ to be a random element of
$\mathcal{G}_\leftrightarrow$ or $\mathcal{M}_\leftrightarrow$ as
appropriate.

A random rooted graph $(G,\rho)$ is \textbf{stationary} if $(G,\rho) \eqd (G,X_1)$ and \textbf{reversible} if $(G,\rho,X_1)
\eqd (G,X_1,\rho)$ as doubly rooted graphs.  Equivalently, $(G,\rho)$ is
reversible if and only if $(G,(X_n)_{n \in \Z})$ is stationary with respect
to the shift:
\[
(G,(X_n)_{n \in \Z}) \eqd (G,(X_{n+k})_{n \in \Z}) \text{ for every $k \in
  \Z$.}
\]
To see this, it suffices to prove
that if $(G,\rho)$ is a reversible random graph then
$(X_1,\rho,X_{-1},X_{-2},\ldots)$ has the law of a simple random walk
started from $X_1$. But $(\rho,X_{-1},\ldots)$ is a simple random walk
started from $\rho$ independent of $X_1$ and, conditional on $(G,X_1)$,
reversibility implies that $\rho$ is uniformly distributed among the
neighbours of $X_1$, so $(X_1,\rho,X_{-1},X_{-2},\ldots)$ has the law of a
simple random walk as desired.

We remark that if $(G,\rho)$ is stationary but not necessarily reversible,
it is still possible to extend the walk to a doubly infinite path
$(X_n)_{n\in\Z}$ so that $G$ is stationary along the path.  The difference
is that in the reversible case the past $(X_n)_{n\leq 0}$ is itself a
simple random walk with the same law as the future.

Reversibility is related to unimodularity via the following bijection, which is implicit in \cite{AL07} and proven explicitly in \cite{BC2011}: if
$(G,\rho)$ is reversible, then biasing by $\deg(\rho)^{-1}$ (i.e.\
reweighing the law of $(G,\rho)$ by the Radon-Nikodym derivative
$\deg(\rho)^{-1}/\E[\deg(\rho)^{-1}]$) gives an equivalent unimodular
random rooted graph, and conversely if $(G,\rho)$ is a unimodular random rooted graph
with finite expected degree, then biasing by $\deg(\rho)$ gives an
equivalent reversible random rooted graph.  Thus, the laws of reversible  random rooted graphs are in bijection with the laws of unimodular random rooted graphs
 for which the root degree has finite expectation.

An event $A\subset \mathcal{G}_\leftrightarrow$ is said to be
\textbf{invariant} if $(G,(X_{n})_{n \in \Z})\in A$ implies
$(G,(X_{n+k})_{n \in \Z}) \in A$ for each $k \in \Z$.  A reversible or unimodular random
graph is said to be \textbf{ergodic} if the law of $(G,(X_{n})_{n \in \Z})$
gives each invariant event probability either zero or one. An event $A \subseteq \mathcal{G}_\bullet$ is \textbf{rerooting-invariant} if $(G,\rho)\in A$ implies $(G,v)\in A$ for every vertex $v$ of $G$.

\begin{thm}[Characterisation of ergodicity {\cite[\S 4]{AL07}}]
  Let $(G,\rho)$ be a unimodular  random rooted graph with $\E[\deg(\rho)]<\infty$ (resp. a reversible random rooted graph). The following are
  equivalent.
  \begin{enumerate}\itemsep0em
  \item $(G,\rho)$ is ergodic.
  \item Every rerooting-invariant event $A\subseteq \mathcal{G}_\bullet$ has probability in $\{0,1\}$.
  \item The law of $(G,\rho)$ is an extreme point of the weakly closed
    convex set of laws of unimodular (resp. reversible) random rooted graphs.
\end{enumerate}
  %Additionally, when $(G,\rho)$ is reversible, these are equivalent to $(G,\rho)$ being ergodic.
\end{thm}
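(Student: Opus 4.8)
The plan is to work throughout with the shift dynamics on $\mathcal{G}_\leftrightarrow$. Writing $S$ for the shift $(G,(x_n)_{n\in\Z})\mapsto(G,(x_{n+1})_{n\in\Z})$, reversibility of $(G,\rho)$ is precisely the statement that the law $\mu$ of $(G,(X_n)_{n\in\Z})$ is $S$-invariant, and (1) asserts that the $S$-invariant $\sigma$-field $\mathcal{I}$ is $\mu$-trivial. The equivalence (1)$\Leftrightarrow$(2) will follow once we identify $\mathcal{I}$, modulo $\mu$-null sets, with the $\sigma$-field of the unrooted graph (i.e.\ the events invariant to changing the root); the equivalence (1)$\Leftrightarrow$(3) is the abstract fact that ergodic measures are exactly the extreme points of the simplex of invariant measures, specialised to the convex set of reversible laws.

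For (1)$\Rightarrow$(2), given a root-invariant event $A\subset\mathcal{G}_\bullet$ I would pull it back along the map $\pi:(G,(x_n))\mapsto(G,x_0)$: root-invariance makes $\pi^{-1}(A)$ an $S$-invariant event with $\mu(\pi^{-1}(A))=\P((G,\rho)\in A)$, so (1) forces this probability into $\{0,1\}$. The substantive direction is (2)$\Rightarrow$(1). Given an $S$-invariant event $B$, set $h(G,v)=\P_v(B)$, the probability of $B$ for the two-sided walk re-centred at $v$. Using the fact recorded before the statement --- that under reversibility $(X_1,\rho,X_{-1},\dots)$ is again a simple random walk from $X_1$ --- the walk restarted from a uniform neighbour of $v$ is again a two-sided walk, and $S$-invariance of $B$ then gives harmonicity,
\[
h(G,v)=\frac{1}{\deg(v)}\sum_{w\sim v}h(G,w).
\]
Consequently $h(G,X_n)$ is a bounded martingale along the walk which, being also stationary, has orthogonal increments of equal variance; hence $\E[(h(G,X_{n+1})-h(G,X_n))^2]=0$ and $h$ is almost surely constant along the trajectory, equal to $h(G,\rho)$.

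It remains to upgrade this constancy to the conclusion that $\mathbf{1}_B$ agrees $\mu$-almost surely with a root-invariant event. Here I would argue that $h(G,\rho)\in\{0,1\}$ almost surely: since $h(G,X_n)$ is constant along the two-sided walk and $B$ lies in the two-sided tail, a reverse-martingale/Lévy argument identifies $\lim_n h(G,X_n)$ with $\mathbf{1}_B$, forcing $h\in\{0,1\}$ and exhibiting $B$ (mod null) as a function of the unrooted graph. Triviality of root-invariant events then yields $\mu(B)\in\{0,1\}$. For (1)$\Leftrightarrow$(3) I would use the standard Choquet picture: if the law of $(G,\rho)$ were a nontrivial convex combination $t\mu_1+(1-t)\mu_2$ of distinct reversible laws, then an event separating $\mu_1$ from $\mu_2$ produces, via Birkhoff's ergodic theorem, an $S$-invariant function taking different values on the two pieces, contradicting ergodicity; conversely, if $B$ is $S$-invariant with $0<\mu(B)<1$, then conditioning $\mu$ on $B$ and on $B^c$ splits it as a nontrivial combination of two laws which --- because conditioning on an invariant event preserves both $S$-stationarity and the simple-random-walk structure --- are again reversible, contradicting extremality.

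I expect the main obstacle to be exactly the upgrade step in (2)$\Rightarrow$(1): passing from \emph{constancy of $h$ along the walk} to \emph{measurability of $B$ with respect to the unrooted graph}. The difficulty is twofold: the event $B$ is two-sided, so one cannot simply invoke one-sided tail triviality, and when the walk is transient its trajectory is a negligible part of $G$, so constancy along the walk is a priori much weaker than constancy on all of $G$. The resolution is that the relevant limit is itself invariant under re-rooting --- moving the root along an edge does not change the long-run behaviour seen by the walk --- so by reversibility it descends to a genuine function of the isomorphism class of $G$; making this descent precise, rather than the harmonic-function computation, is where the real work lies.
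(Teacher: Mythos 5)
The paper does not actually prove this theorem --- it is quoted from \cite[\S 4]{AL07} --- so there is no internal proof to compare against; I will judge your outline on its own terms. The architecture is the standard one (identify the shift-invariant $\sigma$-field with the root-invariant $\sigma$-field mod null, then invoke the usual ergodicity/extremality dictionary), and the easy directions are fine. But the central step of (2)$\Rightarrow$(1) contains a genuine error: the function $h(G,v)=\P_v(B)$, defined by launching a \emph{fresh} two-sided walk from $v$, is not harmonic for a general shift-invariant $B$. Shift-invariant events need not be tail events: $B=\{\exists n:\deg(x_n)=5\}$ is shift-invariant, yet $h(G,\cdot)\equiv 1$ at every degree-$5$ vertex while $h<1$ at some neighbour on a transient graph with sparse degree-$5$ vertices, so the mean-value property fails. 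The fact you cite from the paper --- that $(X_1,\rho,X_{-1},\dots)$ is again a simple random walk from $X_1$ --- is a distributional statement about the \emph{random} root under the reversible law; it does not say that, on a fixed graph, the two-sided walk re-centred at a neighbour $w$ of a fixed $v$ is a fresh two-sided walk from $w$ (the re-centred past takes a deterministic first step back to $v$). The object that genuinely is a stationary martingale is $M_n=\P(B\mid G,(X_k)_{k\le n})$, which conditions on the whole past rather than the current position; the standard argument shows the law of $M_n$ is independent of $n$ while $M_n\to\mathbf{1}_B$, forcing $M_n\in\{0,1\}$ a.s., and then uses conditional independence of past and future given $(G,X_0)$ to place $B$ in $\sigma(G,X_0)$ mod null. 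Your $h(G,X_n)$ is not that martingale, and L\'evy's 0--1 law does not apply to it, so both the ``orthogonal increments'' step and the subsequent identification of $\lim_n h(G,X_n)$ with $\mathbf{1}_B$ are unsupported.

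The ``upgrade'' you flag at the end --- passing from an event of $(G,X_0)$ that is a.s.\ preserved by one step of the walk to a genuinely root-invariant event in $\mathcal{G}_\bullet$ --- is indeed where the remaining work lies, and your proposal only gestures at it. Note also that your argument for (3)$\Rightarrow$(1) quietly assumes the same missing fact: splitting $\mu$ as $\mu(B)\,\mu(\cdot\mid B)+\mu(B^c)\,\mu(\cdot\mid B^c)$ stays inside the convex set of \emph{reversible random rooted graph} laws only once you know $B$ agrees mod null with a root-invariant event, since conditioning on a general shift-invariant event biases the conditional law of the trajectory given $(G,X_0)$ away from that of two independent simple random walks. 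So both nontrivial implications in your outline rest on the identification of the invariant $\sigma$-field that has not actually been established.
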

(The equivalence of items $2$ and $3$ holds for unimodular random rooted graphs without the assumption of finite expected degree.)
A consequence of the extremal characterisation is that every unimodular
random rooted graph is a \emph{mixture} of ergodic unimodular random rooted
graphs, meaning that it may be sampled by first sampling a random law of an
ergodic unimodular random rooted graph, and then sampling from this
randomly chosen law - this is known as an ergodic decomposition and its
existence is a consequence of Choquet's Theorem. In particular, whenever we want to prove that a unimodular random rooted
graph with some almost sure property also has some other almost sure
property, it suffices to consider the ergodic case. The same comment applies for reversible random rooted graphs.

\subsection{Invariant amenability}\label{subsec:invariant_expansion}

We begin with a brief review of general amenability, before combining it
with unimodularity for the notion of invariant amenability. We refer the
reader to \cite[\S 6]{LP:book} for further details on amenability in
general, and \cite[\S 8]{AL07} for invariant amenability.

A \textbf{weighted} graph is a graph together with a weight function $w : E
\rightarrow \R_+$.  Unweighted multigraphs may always be considered as
weighted graphs by setting $w\equiv 1$.  The weight function is extended to
vertices by $w(x) = \sum_{e\ni x} w(e)$, and (with a slight abuse of
notation) to sets of edges or vertices by additivity.  The simple random
walk $X=(X_n)_{n \geq0 }$ on a weighted graph is the Markov chain on $V$
with transition probabilities $p(x,y) = w(x,y)/w(x)$.  Here, our graphs are
allowed to have infinite degree provided $w(v)$ is finite for every vertex.

The (edge) \textbf{Cheeger constant} of an infinite weighted graph is
defined to be
\[
\mathbf{i}_E(G) = \inf \left\{\frac{w(\partial_E W)}{w(W)} : \emptyset
  \neq W \subset V \text{ finite} \right\}
\]
where $\partial_E W$ denotes the set of edges with exactly one end in $W$.
A graph is said to be \textbf{amenable} if its Cheeger constant is zero and
\textbf{non-amenable} if it is positive.

The Markov operator associated to simple random walk on $G$ is the bounded,
self-adjoint operator from $L^2(V,w)$ to itself defined by $(Pf)(u) = \sum
p(u,v)f(v)$. The norm of this operator is commonly known as the
\textbf{spectral radius} of the graph.  If $u,v \in V$ then the transition
probabilities are given by $p_n(u,v) = \left\langle P^n \mathbbm{1}_v,
  \mathbbm{1}_u/w(u) \right\rangle_w$, so that, by Cauchy-Schwarz,
\begin{equation}\label{eq:CS}
  p_n(u,v) \leq \sqrt{\frac{w(v)}{w(u)}} \|P\|_w^n
\end{equation}
and in fact $\|P\|_w = \limsup_{n \to \infty}p_{n}(u,v)^{1/n}$.  A
fundamental result, originally proved for Cayley graphs by Kesten
\cite{Kes59}, is that the spectral radius of a weighted graph is less than
one if and only if the graph is non-amenable (see
\cite[Theorem~6.7]{LP:book} for a modern account).
%This was made quantitative by Cheeger who proved the inequality
%\begin{equation}
%  \|P\|_\omega \leq 1-\frac{\mathbf{i}_E(G)^2}{2}. \label{eq:Cheeger}
%\end{equation}
As an immediate consequence, non-amenable graphs are transient for simple
random walk.

\subsubsection{Invariant amenability}

There are natural notions of amenability and expansion for unimodular
random networks due to Aldous and Lyons \cite{AL07}. A
\textbf{percolation} on a unimodular random rooted graph $(G,\rho)$ is a
random assignment of $\omega : E \cup V \to \{0,1\}$ such that the marked graph
$(G,\rho,\omega)$ is unimodular.  We think of $\omega$ as a random subgraph
of $G$ consisting of the `open' edges and vertices $\omega(e)=1$,
$\omega(v)=1$, and may assume without loss of generality that if an edge is
open then so are both of its endpoints.
%A \textbf{bond percolation} is a
%percolation for which all vertices have $\omega(v)=1$ and only bonds
%(edges) may be missing.

%A \textbf{percolation} on a unimodular random rooted graph $(G,\rho)$ is a
%random assignment of $\omega : E \to \{0,1\}$ such that the marked graph
%$(G,\rho,\omega)$ is unimodular.  We think of $\omega$ as a random subgraph
%of $G$ consisting of the ``open'' edges $\omega(e)=1$.

%  A \textbf{bond percolation} is a
%percolation for which all vertices have $\omega(v)=1$ and only bonds
%(edges) may be missing.

The \textbf{cluster} $K_\omega(v)$ at a vertex $v$ is the connected
component of $v$ in $\omega$, i.e.\ the set of vertices for which there is
a path of open edges to $v$ (by convention, if $\omega(v)=0$ or if there are no open edges touching $v$ we put $K_\omega(v) = \{v\}$). A percolation is said to be \textbf{finitary} if all of its clusters are finite almost surely.  The \textbf{invariant Cheeger constant}
of an ergodic unimodular random rooted graph $(G,\rho)$ is defined to be
\begin{equation}
  \mathbf{i}^{\mathrm{inv}}((G,\rho))= \inf \left\{ \E\left[
      \frac{|\partial_E K_\omega(\rho)|}{|K_\omega(\rho)|}\right] : \omega
    \text{ a finitary percolation on $(G,\rho)$} \right\}.
\end{equation}
%(If $\omega(\rho)=0$ we use the convention that $K_\omega(\rho)=\{\rho\}$.)

The invariant Cheeger constant is closely related to another quantity: mean
degrees in finitary percolations. Let $\deg_\omega(\rho)$ denote the degree
of $\rho$ in $\omega$ (seen as a subgraph; if $\rho \not \in \omega$ we set
$\deg_\omega(\rho)=0$) and let
\[
\alpha((G,\rho)) = \sup \left\{ \E\left[\deg_\omega(\rho)\right] :
  \omega \text{ a finitary percolation on $(G,\rho)$}
\right\}.
\]
An easy application of the mass transport principle \cite[Lemma 8.2]{AL07}
shows that, for any finitary percolation $\omega$,
\[
  \E[\deg_\omega(\rho)] = \E\left[\frac{\sum_{v \in K_\omega(\rho)}
      \deg_\omega(v)}{|K_\omega(\rho)|}\right].
\]
It follows that
\[
   \E[\deg(\rho)] = \mathbf{i}^{\mathrm{inv}}((G,\rho)) + \alpha((G,\rho))
\]
so that if $\E[\deg(\rho)]<\infty$ then $\mathbf{i}^{\mathrm{inv}}((G,\rho))$ is positive if and only if
$\alpha((G,\rho))$ is strictly smaller than $\E[\deg(\rho)]$.

We say that an ergodic unimodular random rooted graph $(G,\rho)$ is
\textbf{invariantly amenable} if $\mathbf{i}^{\mathrm{inv}}((G,\rho))=0$ and
\textbf{invariantly non-amenable} otherwise. Note that this is a property
of the \emph{law} of $(G,\rho)$ and not of an individual graph.  We remark
that what we are calling invariant amenability was called amenability when
it was introduced by Aldous and Lyons \cite[\S 8]{AL07}.  We qualify it as invariant to
distinguish it from the more classical notion, which we also use below.
While any invariantly amenable graph is trivially amenable, the converse is
generally false.  An example is a $3$-regular tree where each edge is
replaced by a path of independent length with unbounded distribution; see
\cite{AL07} for a more detailed discussion.

An important property of invariantly non-amenable graphs was first proved
for Cayley graphs by Benjamini, Lyons and Schramm \cite{BLS99}.  Aldous and
Lyons \cite{AL07} noted that the proof carried through with minor
modifications to the case of invariantly non-amenable unimodular random
rooted graphs, but did not provide a proof. As this property is crucial to
our arguments, we provide a proof for completeness, which the reader may
wish to skip. When $(G,\rho)$ is an ergodic unimodular random rooted graph,
we say that a percolation $\omega$ on $G$ is \textbf{ergodic} if
$(G,\rho,\omega)$ is ergodic as a unimodular random rooted marked graph.
The following is stated slightly differently from both Theorem~3.2 in
\cite{BLS99} and Theorem~8.13 in \cite{AL07}.

\begin{thm}\label{thm:BLS}
  Let $(G,\rho)$ be an invariantly non-amenable ergodic unimodular random
  rooted graph with $\E[\deg(\rho)]<\infty$.  Then $G$ admits an ergodic
  percolation $\omega$ so that $\mathbf{i}_{E}(\omega) > 0$ and
  vertices in $\omega$ have uniformly bounded degrees in $G$.
\end{thm}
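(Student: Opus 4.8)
The plan is to prove Theorem~\ref{thm:BLS} by combining the invariant non-amenability hypothesis with a truncation of high-degree vertices, and then extracting a subgraph with a positive (classical) Cheeger constant. The key point is that invariant non-amenability, via the relation $\mathbf{i}^{inv}(G) = \E[\deg(\rho)] - \alpha(G) > 0$ recorded in \cref{subsec:invariant_expansion}, guarantees that \emph{no} finite percolation can capture too much of the degree; equivalently, every finite percolation leaves behind a definite fraction of edge-boundary. I would aim to follow the Benjamini--Lyons--Schramm strategy \cite{BLS99}, adapting each step to the unimodular random setting and taking care of the unbounded-degree issue that is absent in the Cayley-graph case.

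First I would handle the degree truncation. Since $\E[\deg(\rho)] < \infty$, for any $\eps > 0$ I can choose $M$ large enough that the expected degree contributed by vertices of degree exceeding $M$ is less than $\eps$; deleting all vertices $v$ with $\deg_G(v) > M$ produces a percolation $\omega_M$ (the indicator of $\{\deg_G(v)\le M\}$ is a factor of $(G,\rho)$, hence unimodular) in which all remaining vertices have degree at most $M$ in $G$. The worry is that this deletion could itself destroy non-amenability by fragmenting $G$ into finite pieces. This is where invariant non-amenability does the work: because $\mathbf{i}^{inv}(G) > 0$, the subgraph $\omega_M$ cannot consist only of small clusters once $\eps$ is small relative to $\mathbf{i}^{inv}(G)$. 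Quantitatively, I would show that the infinite clusters of $\omega_M$ carry positive invariant Cheeger constant by bounding, via the mass transport principle, the edge boundary of any finite set inside $\omega_M$ below in terms of $\mathbf{i}^{inv}(G) - C\eps$.

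Next I would upgrade from a positive \emph{invariant} Cheeger constant on the truncated graph to a positive \emph{classical} Cheeger constant $\mathbf{i}_E(\omega) > 0$ on a further percolation subgraph. This is the heart of \cite{BLS99}: one cannot hope for $\mathbf{i}_E > 0$ on all of $\omega_M$, since the invariant constant only controls \emph{averaged} expansion and there may be rare sparse regions. The remedy is to delete the vertices lying in finite clusters, and more importantly to iterate a ``thinning'' that removes the locally worst-expanding regions; one constructs $\omega$ as a decreasing limit of percolations, at each stage removing a small-density set that improves the worst-case isoperimetric ratio, and uses that the invariant Cheeger constant stays positive throughout to guarantee the limit is nonempty with infinite clusters. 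Ergodicity of the resulting $\omega$ is arranged by passing to an ergodic component (using the ergodic decomposition discussed in \cref{subsec:reversibility}) or by making each step a deterministic factor of $(G,\rho)$ so that $(G,\rho,\omega)$ inherits ergodicity.

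\textbf{The main obstacle} I expect is precisely this last passage from an $L^1$/averaged expansion bound to a uniform pointwise bound $\mathbf{i}_E(\omega) > 0$, while simultaneously keeping the degrees bounded by $M$ and preserving infiniteness of the clusters. In the bounded-degree Cayley setting of \cite{BLS99} the two-step argument (first bound expected boundary, then remove sparse regions by a mass-transport ``reallocation'' of surplus boundary) is clean because volume and degree are comparable; here I must verify that the reallocation argument survives when degrees are only bounded after truncation, checking carefully that the mass transport used to spread out boundary surplus sends bounded mass out of each vertex so that \cite[Lemma 2.3]{AL07} applies. The remaining steps are essentially bookkeeping with the mass transport principle, but this uniformization step requires genuine care.
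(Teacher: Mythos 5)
Your proposal is correct and follows essentially the same route as the paper: truncate to the vertices of degree at most $M$ with $M$ chosen so that $\E[\deg_{\omega_0}(\rho)]>\alpha(G)$, then iteratively delete finite subsets with edge-boundary below a $\delta$-fraction of their size (the paper uses auxiliary i.i.d.\ Bernoulli site percolations $\eta_n$ to select the candidate clusters), and finally show the decreasing limit is non-empty by a mass-transport accounting that the total deleted set is a finite percolation and hence has expected degree at most $\alpha(G)$. The obstacle you flag --- upgrading the averaged (invariant) expansion to a pointwise bound $\mathbf{i}_E(\omega)\ge \delta/M$ while keeping $\omega$ non-empty --- is exactly where the paper's proof does its work.
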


Let us stress that the condition of uniformly bounded degrees is for the
degrees in the full graph $G$, and not the degrees in the percolation.

\begin{remark}
  This theorem plays the same role for invariant non-amenability as
  Vir\'ag's oceans and islands construction \cite{V00} does for anchored
  expansion \cite{V00,ANR14,BPP14}. In particular, it gives us a
  percolation $\omega$ such that the induced network $\bar \omega$ is
  non-amenable (see the proof of \cref{lem:exp_decay}).
\end{remark}

\begin{proof}
  Let $\omega_0$ be the percolation induced by vertices of $G$ of degree at
  most $M$ and the edges connecting any two such vertices.  By monotone convergence, and since $\alpha((G,\rho)) < \E
  \deg(\rho)$, we can take $M$ to be large enough that $\E
  \deg_{\omega_0}(\rho) > \alpha((G,\rho))$.  This gives a percolation with
  bounded degrees. We shall modify it further to get non-amenability as
  follows.  Fix $\delta>0$ by
  \[
  3\delta = \E[\deg_{\omega_0}(\rho)] - \alpha((G,\rho)).
  \]

  Construct inductively a decreasing sequence of site percolations
  $\omega_n$ as follows. Given $\omega_n$, let $\eta_n$ be independent
  Bernoulli(1/2)
  site percolations on $\omega_n$, and for each set of vertices $W$ let
  $\partial^{\omega_n}_E W$ denote the set of edges of $\omega_n$ in the
  boundary of $W$.  If $K$ is a finite connected cluster of
  $\eta_n$, with small boundary in $\omega_n$, we remove it to construct
  $\omega_{n+1}$.  More precisely, let $\omega_{n+1} = \omega_n \setminus
  \gamma_n$, where $\gamma_n$ is the subgraph of $\omega_n$ induced by the
  vertex set
  \[
  \bigcup \big\{ K : \text{ $K$ a finite cluster in $\eta_n$
    with $|\partial_E^{\omega_n}(K)| < \delta|K|$} \big\} .
  \]

  Let $\omega = \cap \omega_n$ be the limit percolation, which is clearly
  ergodic.  We shall show below that $\omega\neq\emptyset$.  Any finite
  connected set in $\omega$ appears as a connected cluster in $\eta_n$ for
  infinitely many $n$.  If such a set $S$ has $|\partial_E^{\omega} S| <
  \delta|S|$ then it would have been removed at some step, and so $\omega$
  has $|\partial_E^{\omega} S| \geq \delta|S|$ for all finite connected
  $S$. Since degrees are bounded by $M$, this implies
  $\mathbf{i}_E(\omega)\geq \delta/M>0$.

  It remains to show that $\omega\neq\emptyset$.  For some $n$, and any
  vertex $u$, let $K(u)$ be its cluster in $\eta_n$.  Consider the mass
  transport
  \[
  f_n(u,v) = \begin{cases}
    \deg_{\omega_n}(v)/|K(u)| & u \in \gamma_n \text{ and } v\in K(u), \\
    E(v,K(u))/|K(u)| & u \in \gamma_n \text{ and } v\in\omega_n \setminus
    K(u), \\
    0 & u \notin \gamma_n.
  \end{cases}
  \]
  Here $E(v,K(u))$ is the number of edges between $v$ and $K(u)$.  We have
  that the total mass into $v$ is the difference $\deg_{\omega_n}(v) -
  \deg_{\omega_{n+1}}(v)$ (where the degree is $0$ for vertices not in the
  percolation) while the mass sent from a vertex $v \in \gamma_n$ is twice
  the number of edges with either end in $K(v)$, divided by $|K(v)|$.
  Applying the mass transport principle we get
  \begin{equation}
    \label{eq:MTP1}
    \E[ \deg_{\omega_n}(\rho) - \deg_{\omega_{n+1}}(\rho)] =
    \E\left[ \frac{\sum_{v\in K(\rho)} \deg_{\gamma_n}(v) +
      2|\partial_E^{\omega_n}(K(\rho))|} {|K(\rho)|}
    \mathbbm{1}_{\rho \in \gamma_n} \right].
  \end{equation}

  By a second transport, of $\deg_{\gamma_n}(u)/|K(u)|$ from every $u \in
  \gamma_n$ to each $v \in K(u)$, we see that
  \begin{equation}
    \label{eq:MTP2}
    \E\left[ \frac{\sum_{v\in K(\rho)} \deg_{\gamma_n}(v)} {|K(\rho)|}
    \mathbbm{1}_{\rho \in \gamma_n} \right] = \E[\deg_{\gamma_n}(\rho)].
  \end{equation}
  Additionally, on the event $\{\rho \in \gamma_n\}$, we have by definition
  that
  \[
  |\partial_E^{\omega_n}(K(\rho))|/|K(\rho)| \leq \delta.
  \]
  Plugging these two in \eqref{eq:MTP1} gives
  \[
  \E [ \deg_{\omega_n}(\rho)-\deg_{\omega_{n+1}}(\rho)]
  \leq \E[\deg_{\gamma_n}(\rho)] + 2\delta \P(\rho \in \gamma_n).
  \]

  Let $\gamma = \cup_{n \geq 1} \gamma_n$, which is a percolation since it
  is defined as a measurable, automorphism invariant function of $(G,\rho)$
  and the i.i.d.\ sequence of Bernoulli percolations $(\eta_n)$.  Note the
  percolations $\gamma_n$ are disjoint, so that the event $\rho \in
  \gamma_n$ can occur for at most one $n$ and that $\gamma$ is a finitary
  percolation.  Thus $\E[\deg_\gamma(\rho)] = \sum_n
  \E[\deg_{\gamma_n}(\rho)] \leq
  \alpha((G,\rho))$.  Also, $\sum_n \P(\rho \in \gamma_n) \leq 1$. Summing over
  $n$ gives
  \[
  \E[ \deg_{\omega_0}(\rho)-\deg_{\omega}(\rho)] \leq \alpha((G,\rho)) + 2\delta.
  \]
  The definition of $\delta$ leaves $\E[\deg_{\omega}(\rho)] \geq \delta$.
  Thus $\omega$ is indeed non-empty as claimed, completing the proof.
\end{proof}

\subsection{Circle packings and vertex extremal length}
\label{subsec:CP_background}

Recall that a \textbf{circle packing} $\cC$ is a collection of discs of
disjoint interior in the plane $\C$.  Given a circle packing $\cC$, we define
its \textbf{tangency map} as the map whose embedded vertex set $V$
corresponds to the centres of circles in $\cC$ and whose edges are given by
straight lines between the centres of tangent circles.  If $\cC$ is a
packing whose tangency map is isomorphic to $G$, we call $\cC$ a packing of
$G$.

\begin{thm}[Koebe-Andreev-Thurston Circle Packing Theorem \cite{K36,Th78}]
  Every finite simple planar map arises as the tangency map of a circle
  packing. If the map is a triangulation, the packing is unique up to
  M\"{o}bius transformations of the sphere.
\end{thm}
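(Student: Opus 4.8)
The plan is to treat the two assertions separately: existence of a packing for every finite simple planar map, and rigidity (uniqueness up to M\"obius transformations) in the triangulation case. Throughout I work with triangulations of the sphere, reducing the general existence statement to this case by a standard augmentation: embed the given simple planar map $G$ into a triangulation $T$ of the sphere by adding vertices in the non-triangular faces together with edges, arranged so that no two original vertices are joined by a new edge unless they were already adjacent. A circle packing of $T$ then restricts, on the circles indexed by $V(G)$, to a packing of $G$, so it suffices to circle pack triangulations of the sphere.

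For existence I would parametrise candidate packings by a radius function $r : V \to (0,\infty)$. Three mutually tangent circles of radii $r_1,r_2,r_3$ have centres forming a Euclidean triangle with side lengths $r_i+r_j$, and the angle $\theta(r_1;r_2,r_3)$ at the first centre is given by the law of cosines. For a configuration to close up around each interior vertex $v$, the angles contributed by the incident triangles must sum to $2\pi$; I fix one face as an outer boundary triangle and impose this angle condition at every other vertex. The central analytic input is the monotonicity lemma: $\theta(r_1;r_2,r_3)$ is strictly decreasing in $r_1$ and strictly increasing in $r_2$ and $r_3$, and moreover the Jacobian of the flower-angle map is symmetric. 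Symmetry lets me realise the vector of angle defects $2\pi-\sum\theta$ as the gradient of a strictly convex functional of the logarithmic radii (this is Colin de Verdi\`ere's variational principle), whose unique critical point furnishes radii solving all the angle equations. Given such radii I then develop the triangles one at a time; the angle conditions guarantee that the development is consistent and, by a covering-space argument using simple connectivity of the sphere, that the resulting map is an embedding, producing the packing.

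The rigidity statement is where the real work lies. The key reduction is to prove rigidity of radius labels: if two packings of $T$ induce radius functions $r$ and $r'$, I want $r' = \lambda r$ after a normalising M\"obius transformation. Because three mutually tangent circles can be sent to any other three by a M\"obius transformation, I first apply such a map to the second packing so that the three circles of a fixed face coincide with those of the first; the corresponding three vertices then have radius ratio exactly $1$. I then run a maximum principle on the ratios $r'_v/r_v$: at a vertex attaining the maximal ratio, scale-invariance of angles shows the angle sum of the correspondingly scaled flower equals $2\pi$, while the monotonicity lemma forces the true angle sum under $r'$ to be at most this, with equality only if every neighbour also attains the maximum. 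Propagating along the connected graph down to the matched outer vertices of ratio $1$ forces the maximal ratio to be $1$, hence $r' \le r$; the symmetric argument applied to the minimum gives $r' \ge r$, so $r' = r$. Equal labels together with the matched outer triangle determine the development uniquely, so the two packings coincide, proving uniqueness up to the M\"obius transformation applied at the start.

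I expect the rigidity half to be the main obstacle, and within it the decisive ingredient is the monotonicity lemma for flower angles together with the maximum-principle propagation. The symmetry of the angle Jacobian (needed for the convex functional in the existence half) and the verification that the developed map is genuinely injective rather than merely locally injective are the other technical points that require care.
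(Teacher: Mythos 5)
The paper does not prove this statement at all: it is quoted as classical background (with citations to Koebe and Thurston) in the subsection on circle packings, and the only related result the paper leans on later is Schramm's rigidity theorem for \emph{infinite} packings, also cited rather than proved. So there is nothing in the paper to compare your argument against line by line; what you have written is a sketch of the standard modern proof of the finite theorem, and as a sketch it is essentially correct. Your existence half is the Thurston--Colin de Verdi\`ere variational argument (monotonicity of the flower angles, symmetry of the angle Jacobian, a convex functional of the log-radii, then development of the triangles), and your rigidity half is the usual maximum principle on the ratio of radius labels after a M\"obius normalisation matching one face. Two points deserve more care than your sketch gives them. First, in the existence step you cannot impose the angle-sum-$2\pi$ condition at \emph{every} vertex of a sphere triangulation (Euler's formula forbids it, which is exactly why you remove a face); you must say what condition replaces it at the three outer vertices --- e.g.\ a Dirichlet condition fixing their radii --- and then check that the developed complex closes up into a Euclidean triangle whose complement realises the deleted face. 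Second, in the rigidity step, after applying the M\"obius map matching three circles you must ensure the transformed configuration is still a genuine Euclidean packing with finite positive radii (no circle has become a line, and the unbounded face is still the deleted one), since the maximum principle is an argument about radius labels. Neither issue is a flaw in the strategy, only in the level of detail.
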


The \textbf{carrier} of a circle packing is the union of all the discs in
the packing together with the curved triangular regions enclosed between
each triplet of circles corresponding to a face (the \emph{interstices}).
Given some planar domain $D$, we say that a circle packing is \textbf{in
  $D$} if its carrier is $D$.

\begin{thm}[Rigidity for Infinite Packings, Schramm \cite{Schramm91}]
  \label{thm:rigidity}
  Let $G$ be a triangulation, circle packed in either $\C$ or $\D$. Then
  the packing is unique up to M\"{o}bius transformations preserving of $\C$
  or $\D$ respectively.
\end{thm}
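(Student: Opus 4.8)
The plan is to compare two packings of the same triangulation through a discrete maximum principle for radii, the only genuine subtlety being the behaviour at infinity. Suppose $G$ is packed as $\cC=(z,r)$ and $\tilde\cC=(\tilde z,\tilde r)$, both with carrier $\D$ (the $\C$ case is entirely analogous, working with Euclidean radii in place of hyperbolic ones). Since M\"obius transformations preserving $\D$ are hyperbolic isometries, they preserve hyperbolic radii; hence it suffices to show that the hyperbolic radii $r_h(\cdot)$ are determined by $G$ alone, for once $r_h\equiv\tilde r_h$ a single disc automorphism carries one packing onto the other. The whole problem thus reduces to showing that a suitable comparison of the two radius functions is constant: the ratio $r(v)/\tilde r(v)$ of Euclidean radii in the $\C$ case, and the difference $s(v)=r_h(v)-\tilde r_h(v)$ of hyperbolic radii in the $\D$ case.

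First I would record the local geometry. Around an interior vertex $v$, the angles subtended at the centre of the circle of $v$ by the triangular faces meeting at $v$ sum to $2\pi$. Writing this angle sum as a function $\Theta$ of the radius at $v$ and of the radii at its neighbours, there are two monotonicities that drive everything: $\Theta$ strictly decreases as the radius at $v$ grows and strictly increases as any neighbour's radius grows, and in the Euclidean case $\Theta$ is invariant under a common rescaling of all radii (the source of the ``up to similarity'' ambiguity). These are elementary trigonometric facts about three mutually tangent circles in the relevant metric.

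From the monotonicities one extracts the key maximum principle: if $s$ attains a maximum at a vertex $v$ over $v$ together with all its neighbours, then $s$ is in fact constant on that neighbourhood. Indeed, comparing the angle sums at $v$ in the two packings and using that the neighbour radii are uniformly more favourable in one packing than in the other forces both sums, each equal to $2\pi$, to be matched term by term; strict monotonicity then yields equality of $s$ at every neighbour. By connectivity of $G$, if $s$ attains its supremum anywhere, it is globally constant, and we are done.

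The hard part, and the entire content of the theorem, is that an infinite triangulation has no boundary, so there is no reason for $s$ to attain its supremum: the extremal behaviour may escape to the ideal boundary $\partial\D$ (or to $\infty$ in the plane). For bounded degrees one rescues the argument through compactness supplied by the Ring Lemma, which bounds ratios of adjacent radii and allows extraction of a limiting configuration where the supremum is attained; but \cref{thm:rigidity} asserts rigidity with \emph{no} degree hypothesis, precisely the regime in which the Ring Lemma fails. Overcoming this is the crux, and I would follow Schramm here: instead of a pointwise maximum principle, one argues globally, studying the correspondence carrying the tangency pattern of $\cC$ to that of $\tilde\cC$ and using a winding-number / argument-principle analysis around faces to show that any genuine discrepancy between the packings would force circles either to overlap or to open a gap, contradicting that both realise the same triangulation. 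I expect this topological control of the discrepancy at the ideal boundary, in the absence of any uniform bound on radius ratios, to be the main obstacle, and it is exactly where Schramm's argument does its essential work.
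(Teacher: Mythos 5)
The paper does not prove this statement at all: it is quoted from Schramm's work \cite{Schramm91} and used as a black box throughout (the surrounding text makes clear that rigidity is an \emph{input} to the paper's arguments, not something it establishes). So there is no proof in the paper to compare yours against; your proposal has to stand on its own.

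On its own terms it has a genuine gap, and you have located it yourself. The local part --- monotonicity of the angle-sum function in the central and neighbouring radii, and the resulting discrete maximum principle for a comparison quantity (ratio of Euclidean radii in $\C$, a suitable comparison of hyperbolic radii in $\D$) --- is standard and essentially right, though even there the hyperbolic case needs a specific lemma identifying which functional of the two radius functions actually obeys a maximum principle; the naive difference $r_h(v)-\tilde r_h(v)$ is not obviously the correct choice and requires justification. More importantly, everything you actually argue reduces the theorem to the assertion that this comparison function attains its supremum, and for an infinite triangulation with unbounded degrees that is exactly what fails to be automatic. Your final paragraph does not close this: it names the obstacle, correctly observes that the Ring Lemma is unavailable, and then defers to ``a winding-number / argument-principle analysis'' following Schramm without formulating, let alone proving, any such statement. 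That topological step --- in Schramm's paper it is the Incompatibility Theorem, a genuinely nontrivial result about two packings realizing the same combinatorics --- is the entire content of the theorem in precisely the unbounded-degree regime this paper needs it for. As written, the proposal is an accurate road map to where the proof must go, not a proof.
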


It is often fruitful to think of packings in $\D$ as being circle packings
in (the Poincar\'{e} disc model of) the hyperbolic plane.  The uniqueness of
the packing in $\D$ up to M\"{o}bius transformations may then be stated
as uniqueness of the packing in the hyperbolic plane up to isometries of
the hyperbolic plane.

\medskip

The \textbf{vertex extremal length}, defined in \cite{HeSc}, from a vertex
to infinity on an infinite graph $G$ is defined to be
\begin{equation}
  \label{eq:VELdef}
  \VEL_G(v,\infty) = \sup_m \frac{\inf_{\gamma:v\to\infty}
    m(\gamma)^2}{\|m\|^2},
\end{equation}
where the supremum is over measures $m$ on $V(G)$ such that $\|m\|^2 = \sum
m(u)^2 < \infty$, and the infimum is over paths from $v$ to $\infty$ in $G$.
% (Clearly it suffices to consider simple paths.)
A connected graph is said to be \textbf{VEL parabolic} if $\VEL(v \to
\infty) = \infty$ for some vertex $v$ (and hence for any vertex) and
\textbf{VEL hyperbolic} otherwise.  The VEL type is monotone in the sense
that subgraphs of VEL parabolic graphs are also VEL parabolic.  A simple
random walk on any VEL hyperbolic graph is transient.  For graphs with
bounded degrees the converse also holds: Transient graphs with bounded
degrees are VEL hyperbolic~\cite{HeSc}.

\begin{thm}[He-Schramm \cite{HS93,HeSc}]
  Let $G$ be a one-ended, infinite, simple planar triangulation. Then $G$ may be
  circle packed in either the plane $\C$ or the unit disc $\D$, according
  to whether it is VEL parabolic or hyperbolic respectively.
\end{thm}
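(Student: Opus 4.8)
My plan is to prove the theorem in two stages: first produce \emph{some} circle packing of $G$ whose carrier is either $\C$ or $\D$, and then show that the carrier is $\D$ precisely when $G$ is VEL hyperbolic. For the existence stage I would exhaust $G$ by an increasing sequence of finite, simply connected triangulations with boundary $G_1\subset G_2\subset\cdots$, which is possible because $G$ is a one-ended simple triangulation and hence topologically a triangulation of an open disc. By the Koebe--Andreev--Thurston theorem each $G_n$ has a circle packing; I would take the \emph{maximal} packing in $\D$, in which every boundary circle is internally tangent to $\partial\D$, and normalize by a M\"obius transformation so that the circle of the root $\rho$ is centred at $0$. As one passes from $G_n$ to $G_{n+1}$ the extra boundary layer only shrinks the interior circles, so the hyperbolic radius $r_h^{(n)}(\rho)$ is nonincreasing and converges to a limit $\ell\in[0,\infty)$. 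Since all centres lie in the compact set $\bar\D$ and the normalization fixes the scale at the root, I can pass to a subsequence along which every centre and radius converges, producing a packing $P$ of the full graph $G$.

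The value of $\ell$ dictates the type. If $\ell>0$ then $P$ is a genuine packing of $G$ in $\D$; if $\ell=0$ then I would instead rescale each finite packing Euclideanly so that the root radius equals $1$, and the subsequential limit is then a packing whose carrier is all of $\C$. In either case one must check that the carrier fills the entire domain ($\D$ or $\C$) rather than a proper subdomain, and Schramm's rigidity (\cref{thm:rigidity}) guarantees that the resulting type is intrinsic to $G$ and does not depend on any of the choices made.

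It remains to match the CP type with the VEL type, and here the radii of $P$ serve as the bridge between the combinatorics of $G$ and the conformal geometry of the carrier $D$. Taking the discrete metric $m(v)=r(v)$, disjointness of the circles gives $\|m\|^2=\sum_v r(v)^2<\infty$ exactly when $D$ has finite area, i.e.\ when $D=\D$. More precisely I would compare $\VEL_G(v,\infty)$ with the continuous extremal distance in $D$ from the circle of $v$ to the ideal boundary $\partial D$: passing from a conformal metric on $D$ to a vertex metric by integrating over each circle, and conversely spreading each weight $m(v)$ uniformly over its circle, shows that these two quantities agree up to universal constants. For $D=\D$ the extremal distance from an interior point to $\partial\D$ is finite, so $G$ is VEL hyperbolic; for $D=\C$ the extremal distance to the point at infinity is infinite (the classical logarithmic divergence coming from the moduli of the annuli $\{2^k\le|z|<2^{k+1}\}$), so $G$ is VEL parabolic. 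Combined with the dichotomy above this yields the stated equivalence.

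The main obstacle, I expect, is twofold and concentrated in the parts I have glossed. First, because $G$ may have unbounded degrees the Ring Lemma is unavailable, so the compactness argument and the claim that the carrier fills all of $\D$ or $\C$ cannot rely on uniform control of the ratios of adjacent radii, and must instead be driven purely by the monotonicity and maximality of the finite packings. Second, the VEL-to-extremal-length comparison in the disc case is genuinely the hard direction: establishing VEL hyperbolicity requires an \emph{upper} bound on $\VEL_G(v,\infty)$, hence a bound holding uniformly over all competing vertex metrics, and it is this uniform comparison between the discrete and continuous moduli that carries the real analytic content of the theorem.
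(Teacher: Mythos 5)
The paper does not prove this statement: it is quoted as a background theorem of He and Schramm, so there is no internal argument to compare against, and your proposal has to be judged as a reconstruction of the original proof. Its broad strategy is indeed the right one (exhaustion by finite triangulations, maximal Koebe packings, monotonicity of hyperbolic radii at the root, a length--area comparison between $\VEL$ and continuous extremal length), but the two steps you defer are not technical verifications --- they are the content of the theorem. On the existence side, normalising the root circle and extracting subsequential limits of centres and radii does not by itself yield a packing of $G$: without the Ring Lemma, circles other than the root may degenerate to points, tangencies may be lost in the limit, and even if every circle survives the carrier of the limit may be a proper subdomain of $\D$ or of $\C$ (in your $\ell=0$ case the rescaled packings live in larger and larger discs, and nothing in the argument forces the limiting carrier to be all of $\C$ rather than, say, a half-plane). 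Monotonicity controls only the hyperbolic radius of the root; ruling out degeneration elsewhere and identifying the carrier requires a separate argument, in which rigidity (\cref{thm:rigidity}) and the topology of the triangulation both enter.

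The second gap is the one you yourself flag as ``the real analytic content,'' and flagging it is not the same as closing it. The parabolic direction (carrier $\C$ implies $\VEL$ parabolic) is essentially the dyadic-annuli computation you describe and is fine. But the hyperbolic direction requires showing that for \emph{every} vertex metric $m$ of finite norm there is a path to infinity of $m$-length $O(\|m\|)$. The standard device is to average the $m$-length over the family of rays from $z(v)$ to $\partial\D$ and convert each ray into a graph path; the obstruction is that consecutive circles met by a ray need not be tangent, so the combinatorial path must be rerouted through the interstices using the triangulation structure, and the averaging must bound the total angular measure of directions meeting each circle with no degree assumption available. Your sentence asserting that the discrete and continuous moduli ``agree up to universal constants'' states the conclusion of this lemma rather than proving it. As written, the proposal is a correct road map with its two hard lemmas left open.
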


\medskip

The final classical fact about circle packing we will need is the following
quantitative version, due to Hansen \cite{Hansen}, of the Ring Lemma of
Rodin and Sullivan~\cite{RS87}, which will allow us to control the radii
along a random walk.

\begin{thm}[The Sharp Ring Lemma \cite{Hansen}]\label{lem:sharp_ring}
  Let $u$ and $v$ be two adjacent vertices in a circle packed triangulation, and
  $r(u),r(v)$ the radii of the corresponding circles.  There exists a universal
  positive constant $C$ such that
  \[
  \frac{r(v)}{r(u)} \leq e^{C\deg(v)}.
  \]
\end{thm}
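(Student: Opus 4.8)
The plan is to reduce the statement to a purely local, planar‑geometric fact about the ``flower'' of circles surrounding $C_v$, and then to extract the exponential bound by combining a tangency (angle) identity with the planarity of the packing. First I would set up the flower. In a triangulation every interior vertex $v$ of degree $d=\deg(v)$ has its neighbours $u=v_1,v_2,\dots,v_d$ in a cyclic order in which consecutive neighbours are adjacent; correspondingly the circles $C_{v_1},\dots,C_{v_d}$ are each tangent to $C_v$ and consecutively tangent to one another, closing into a ring around $C_v$. (In the one‑ended infinite triangulations to which we apply the lemma every vertex is interior, so this holds; in general one restricts to interior vertices.) Since $r(v)/r(u)$ is invariant under the similarities of the plane, and the packing is otherwise rigid by \cref{thm:rigidity}, I may normalise $C_v$ to have radius $1$ and centre at the origin. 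It then suffices to show that no neighbour is more than exponentially smaller than $C_v$, i.e.\ that $r(v_i)\ge e^{-Cd}$ for every $i$, with $C$ universal.

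Next I would record the angular constraint. Writing $\theta_i$ for the angle at the origin subtended by the centres of $C_{v_i}$ and $C_{v_{i+1}}$, the law of cosines in the triangle of centres (side lengths $1+r_i$, $1+r_{i+1}$ and $r_i+r_{i+1}$) gives the exact identity
\[
\sin\tfrac{\theta_i}{2}=\sigma_i\,\sigma_{i+1},\qquad \sigma_i:=\sqrt{\tfrac{r_i}{1+r_i}}\in(0,1),
\]
together with the closing condition $\sum_{i=1}^{d}\theta_i=2\pi$. Since the largest gap satisfies $\theta_j\ge 2\pi/d$, this already forces $\sigma_j\sigma_{j+1}\ge\sin(\pi/d)\gtrsim 1/d$, hence some neighbour with $r_{j}\gtrsim 1/d$, a ``large'' neighbour of size comparable to $r(v)$ up to a polynomial factor. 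On its own, however, the angle identity cannot prevent an individual neighbour from being arbitrarily small, since a small circle subtends a small angle at no cost; a genuinely planar second input is needed.

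That input is the disjointness of the next‑nearest neighbours $C_{v_{i-1}}$ and $C_{v_{i+1}}$, both tangent to $C_{v_i}$ and to $C_v$. Their angular separation at the origin is $\theta_{i-1}+\theta_i$, and two circles tangent to $C_v$ at angular separation $\psi$ meet exactly when $\sin(\psi/2)=\sigma_{i-1}\sigma_{i+1}$; disjointness therefore reads
\[
\theta_{i-1}+\theta_i\ \ge\ 2\arcsin\!\big(\sigma_{i-1}\sigma_{i+1}\big).
\]
In the small‑radius regime $\sigma_i\approx\sqrt{r_i}$ this linearises, upon setting $q_i=1/\sigma_i$, to the cyclic sub‑Fibonacci inequality $q_i\le q_{i-1}+q_{i+1}$, which forbids the radii from collapsing across a single vertex and forces any deep minimum of $r$ (spike of $q$) to be supported by a whole run of comparably small circles. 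Feeding the closing condition back in, such a run contributes almost nothing to the total angle $2\pi$, so it can only be sustained by letting $q$ grow geometrically along it; telescoping the sub‑Fibonacci recursion along the at most $d$ steps of the cycle from the large neighbour of the previous paragraph to the given $u$ then bounds the growth by $\Phi^{d}$ (with $\Phi$ the golden ratio), giving $q_u\lesssim e^{Cd/2}$, i.e.\ $r(u)\ge e^{-Cd}$.

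The main obstacle is precisely this last, sharp step: neither constraint alone suffices (the disjointness recursion permits unbounded spread, as in a configuration like $(1,M,M,1)$, while the closing condition rules such spread out), so one must use them in tandem, and quantifying their interaction with a \emph{universal} rate is delicate. Getting the constant right — Hansen's sharpening of the Rodin--Sullivan Ring Lemma — requires working with the exact $\arcsin$ relations above rather than their small‑radius linearisations, controlling the lower‑order corrections uniformly, and verifying that the extremal Fibonacci‑type arrangement of radii around the flower is genuinely the worst case. The flower reduction and the normalisation of the first two paragraphs are routine by comparison.
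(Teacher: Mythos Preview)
The paper does not prove this statement at all: \cref{lem:sharp_ring} is quoted from Hansen \cite{Hansen} as background in \cref{subsec:CP_background}, with no argument given, and is then used as a black box (e.g.\ in the proofs of \cref{lem:exp_decay} and \cref{thm:exp_decay}). There is therefore no ``paper's own proof'' to compare your proposal against.

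For what it is worth, your outline is a faithful sketch of the standard route to the Ring Lemma and its sharp form: reduce to the flower of $d=\deg(v)$ petals around a unit central circle, use the exact tangency identity $\sin(\theta_i/2)=\sigma_i\sigma_{i+1}$ together with the closing condition $\sum\theta_i=2\pi$, and then bring in the disjointness of next--nearest petals to obtain a Fibonacci--type recursion that bounds the worst ratio by roughly $\Phi^{2d}$. That is indeed the mechanism behind Hansen's (and Aharonov's) sharp constant, and your identification of the extremal configuration is correct. The one place where your write--up is more heuristic than proof is the passage from the two constraints to the sub--Fibonacci inequality $q_i\le q_{i-1}+q_{i+1}$ and the claim that telescoping it against the closing condition yields the universal exponential rate: you linearise by $\sigma_i\approx\sqrt{r_i}$ in the small--radius regime, but the actual argument must handle the exact $\arcsin$ relations uniformly, since the ``large'' petal guaranteed by $\theta_j\ge 2\pi/d$ is not in that regime. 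This is exactly the delicacy you flag in your final paragraph, so you are aware of it; but as written the proposal is a plan rather than a proof.
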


\subsubsection{Measurability of Circle Packing}

At several points throughout the paper, we will want to define mass
transports in terms of circle packings. In order for these to be measurable
functions of the graph, we require measurability of the circle packing. Let
$(G,u,v)$ be a doubly rooted triangulation and let $\cC(G,u,v)$ be the
unique circle packing of $G$ in $\D$ or $\C$ such that the circle
corresponding to $u$ is centred at 0, the circle corresponding to $v$ is
centered on the positive real line and, in the parabolic case, the root circle has radius one.

Let $G_k$ be an exhaustion of $G$ by finite induced subgraphs with no
cut-vertices and such that the complements $G \setminus G_k$ are connected.
Such an exhaustion exists by the assumption that $G$ is a one-ended
triangulation.  Form a finite triangulation $G^*_k$ by adding an extra
vertex $\partial_k$ and an edge from $\partial_k$ to each boundary vertex
of $G_k$.

Consider first the case when $G$ is CP hyperbolic.
By applying a M\"obius transformation to some circle packing of $G^*_k$, we
find a unique circle packing $\cC^*_k$ of $G^*_k$ in $\C^\infty$ such
that the circle corresponding to $u$ is centred at the origin, the circle
corresponding to $v$ is centred on the positive real line and $\partial_k$
corresponds to the unit circle $\partial \D$.  In the course of the proof
of the He-Schramm Theorem, it is shown that this sequence of packings
converges to the unique packing of $G$ in $\D$, normalised so that the
circle corresponding to $u$ is centred at $0$ and the circle corresponding
to $v$ is centred on the positive real line.
%Such a packing is unique by rigidity (\cref{thm:rigidity}), so that there
%was in fact no need to take subsequences.

As a consequence, the centres and radii of the circles of $\cC(G,u,v)$ are
limits as $r\to\infty$ of the centre and radius of a graph determined by
the ball of radius $r$ around $u$.  In particular, they are pointwise
limits of continuous functions (with respect to the local topology on
graphs) and hence are measurable.

The hyperbolic radii are particularly nice to consider here. Since the
circle packing in $\D$ is unique up to isometries of the hyperbolic plane
(M\"obius maps), the hyperbolic radii do not depend on the choice of packing,
and we find that $r_h(v)$ is a function of $(G,v)$.

In the CP parabolic case, the same argument works except that the packing
$\cC^*_k$ of $G_k^*$ must be chosen to map $u$ to the unit circle and
$\partial_k$ to a larger circle also centred at $0$.

%%%%%%%%%%%%%%%%%%%%%%%%%%%%%%%%%%%%%%%%%%%%%%%%%%%%%%%%%%%%%%%%%%%%%%%%%%%%%%
\section{Characterisation of the CP type}\label{sec:type}

\begin{figure}
  \centering
  % the circle appears a bit smaller, but this is an illusion.
  \includegraphics[width=0.45\textwidth]{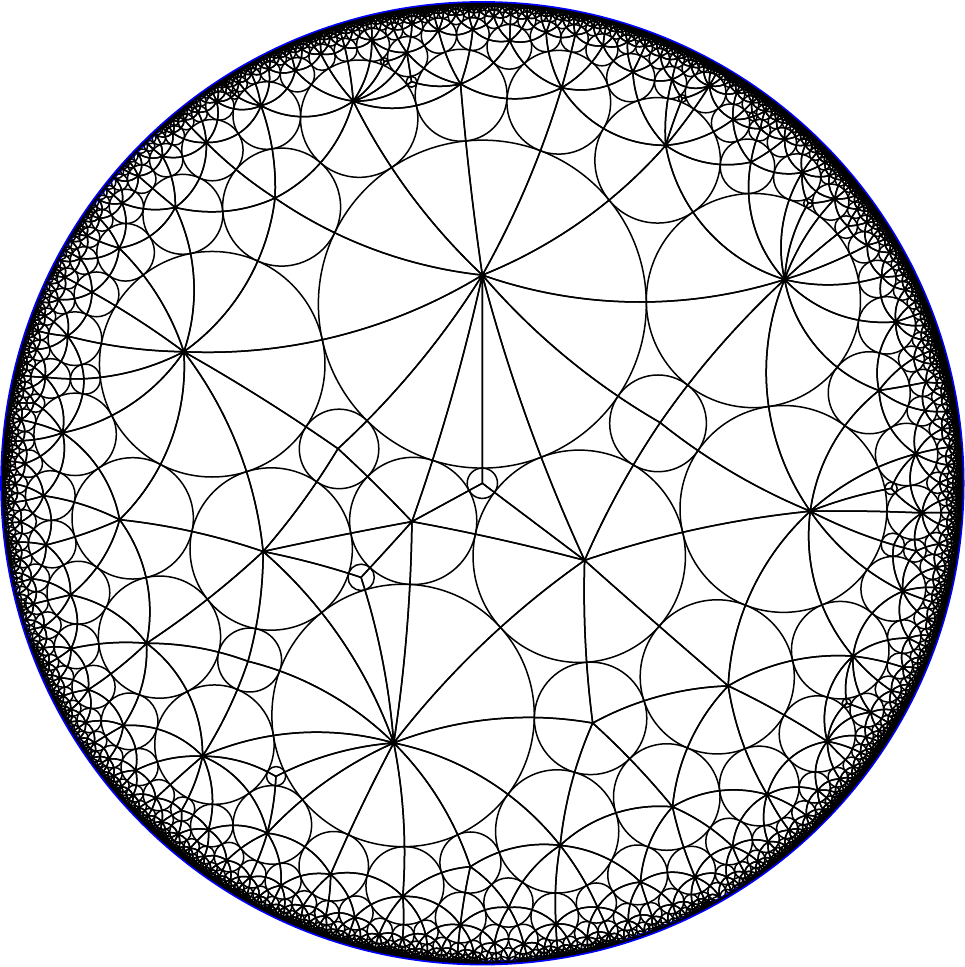}
  \qquad
  \includegraphics[trim = 0in 0.035in 0in 0.035in, clip, width=0.45\textwidth]{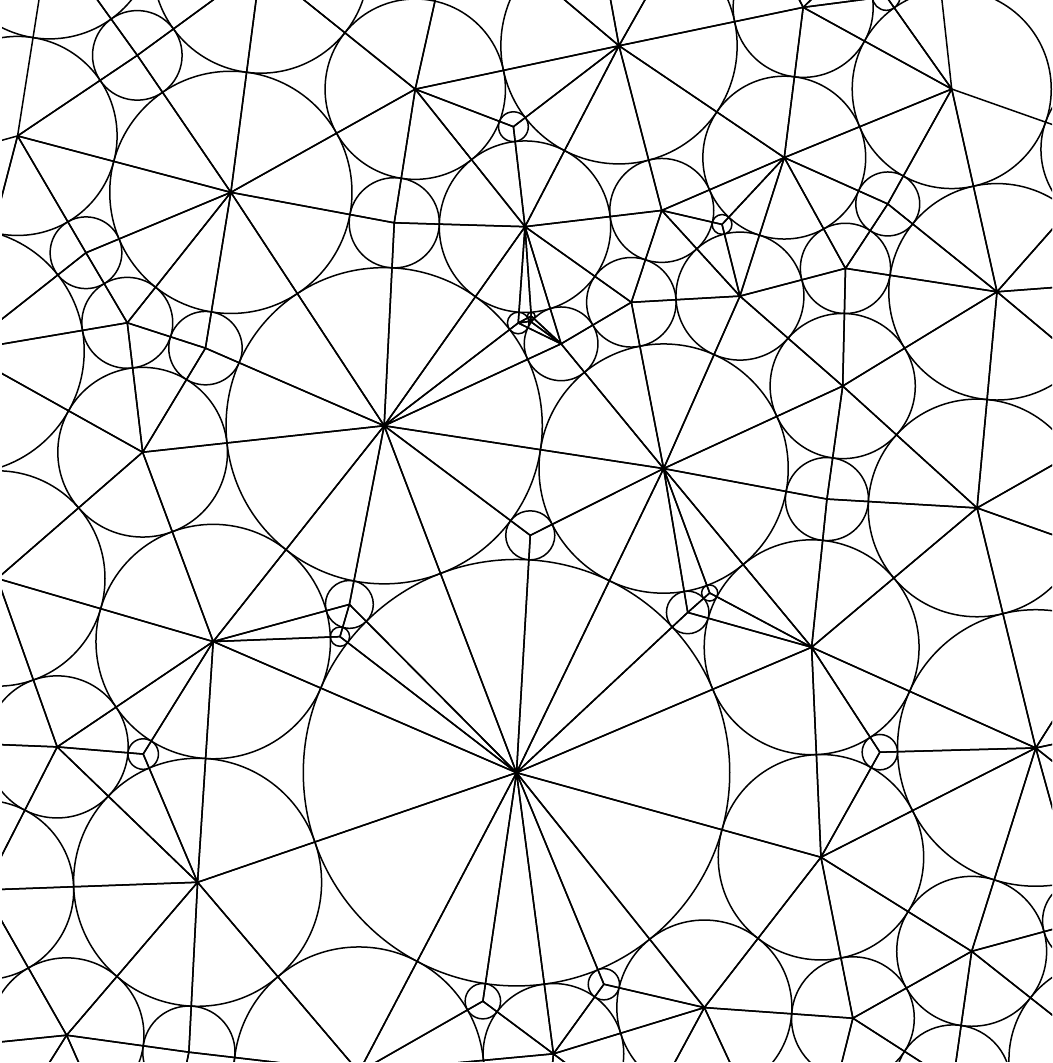}
  \caption{Circle packing induces an embedding of a triangulation with
    either hyperbolic or Euclidean geodesics, depending on CP type.  By
    rigidity (\cref{thm:rigidity}), the angles between pairs of adjacent
    edges do not depend on the choice of packing.}
  \label{fig:embedding}
\end{figure}

\begin{proof}[Proof of \cref{thm:classification}]
  Since $(G,\rho)$ is ergodic, and since the CP type does not depend on the choice of root, the CP type of $G$ is not random.  We first relate
  the circle packing type to the average degree.  Suppose $(G,\rho)$ is CP
  hyperbolic and consider a circle packing of $G$ in the unit disc.  Embed
  $G$ in $\D$ by drawing the hyperbolic geodesics between the hyperbolic
  centres of the circles in its packing, so that each triangle of $G$ is
  represented by a hyperbolic triangle (see \cref{fig:embedding}).  It is
  easy to see that this is a proper embedding of $G$.  By rigidity of the
  circle packing (\cref{thm:rigidity}), this drawing is determined by the
  isomorphism class of $G$, up to isometries of the hyperbolic plane.

  Define a mass transport as follows.  For each face $(u,v,w)$ of the
  triangulation with angle $\beta$ at $u$, transport $\beta$ from $u$ to
  each of $u,v,w$.  If $u$ and $v$ are adjacent, the transport from $u$ to $v$
  has contributions from both faces containing the edge, and the transport
  from $u$ to itself has a term for each face containing $u$.  By rigidity
  (\cref{thm:rigidity}), these angles are independent of the choice of
  circle packing, so that the mass sent from $u$ to $v$ is a measurable
  function of $(G,u,v)$.

  For each face $f$ of $G$, let $\theta(f)$ denote the sum of the internal
  angles in $f$ in the drawing.  The sum of the angles of a hyperbolic
  triangle is $\pi$ minus its area, so $\theta(f)<\pi$ for each face $f$.
  Each vertex $u$ sends each angle 3 times, for a total mass out of exactly
  $6\pi$.  A vertex receives mass
  \[
  \sum_{f :\,  u \in f} \theta(f) < \pi \deg(u).
  \]
  Applying the mass transport principle,
  \[
  6\pi < \pi\E[\deg(\rho)].
  \]
  Thus if $G$ is CP hyperbolic then $\E[\deg(\rho)]>6$.

  In the CP parabolic case, we may embed $G$ in $\C$ by drawing straight
  lines between the centres of the circles in its packing in the plane.  By
  rigidity, this embedding is determined up to translation and scaling, and
  in particular all angles are determined by $G$.  Since the sum of angles
  in a Euclidean triangle is $\pi$, the same transport as above applied in
  the CP parabolic case shows that $\E[\deg(\rho)]=6$.

  \medskip

  We now turn to amenability.  Euler's formula implies that the average
  degree of any finite simple planar graph is at most 6.  It follows that
  \[
  \alpha((G,\rho)) = \sup\left\{\E\left[\frac{\sum_{v \in
          K_\omega(\rho)}\deg_{\omega}(v)}{|K_\omega(\rho)|}\right] :
    \omega \text{ a finitary percolation}\right\} \leq 6.
  \]
  If $G$ is CP hyperbolic then $\E[\deg(\rho)]>6$, so that $\alpha((G,\rho))<\E[\deg(\rho)]$ and $(G,\rho)$ is invariantly non-amenable.
  % and hence $G$ has
  % positive invariant Cheeger constant $\mathbf{i}^{\mathrm{inv}}((G,\rho)) =
  % \E[\deg(\rho)]-\alpha((G,\rho)) > 0$. (This argument is valid even if $\E[\deg(\rho)]=\infty$.)

  Conversely, suppose $G$ is invariantly non-amenable.  By \cref{thm:BLS},
  $G$ almost surely admits a percolation $\omega$ which has positive
  Cheeger constant and bounded degrees.  Such an $\omega$ is transient and
  since it has bounded degree it is also VEL hyperbolic.  By monotonicty of
  the vertex extremal length, $G$ is almost surely VEL hyperbolic as well.
  The He-Schramm Theorem then implies that $G$ is almost surely CP
  hyperbolic.
\end{proof}

\begin{remark}
  In the hyperbolic case, let $\operatorname{Area}(u)$ be the total area of
  the triangles surrounding $u$ in its drawing. Since the angle sum in a
  hyperbolic triangle is $\pi$ minus its area, the mass transport that
  gives average degree greater than 6 in the hyperbolic case also
  gives\[\E[\deg(\rho)] = 6+\frac{1}{\pi}\E[\operatorname{Area}(\rho)],\]
  which relates the expected degree to the density of the circle packing.
\end{remark}

% \begin{remark}
% A related result of Beardon and Stephenson \cite{BeSt91} states that if $G$ is a one-ended triangulation in which the degree of every vertex
% \end{remark}
% We remark again that, since limits of simple planar graphs always have
% expected degree at most 6, we get that such limits are CP parabolic.
% This is the main step in the proof by Benjamini and Schramm that when
% such graphs have bounded degrees they are almost surely recurrent.  In
% \cite{BeSc} this step is done by a computation for the finite graphs,
% (applied for general finite sets in the plane) that gives a finitary,
% quantified version of being CP parabolic.  Our argument works directly in
% the infinite, unimodular setting.

%%%%%%%%%%%%%%%%%%%%%%%%%%%%%%%%%%%%%%%%%%%%%%%%%%%%%%%%%%%%%%%%%%%%%
\subsection{Completing the proof of the Benjamini-Schramm Theorem}
\label{sec:BSproof}

In this section we complete our new proof of the following theorem of
Benjamini and Schramm.

\begin{thm}[\cite{BeSc}]\label{thm:BeSc}
  Let $(G,\rho)$ be a weak local limit of finite planar graphs $G_n$ and
  suppose that $G$ has bounded degrees almost surely.  Then $(G,\rho)$ is
  almost surely recurrent.
\end{thm}

Recall that the number of \textbf{ends of a graph} $G$ is the supremum over
finite sets $K$ of the number of infinite connected components of $G
\setminus K$. As explained in \cite{BeSc}, it suffices to prove
\cref{thm:BeSc} when the graphs $G_n$ are simple triangulations. In this
case \cref{P:one_ended} implies a special case of the Benjamini-Schramm
Theorem: If $(G,\rho)$ is a simple one-ended triangulation that is a
Benjamini-Schramm limit of finite planar triangulations of uniformly
bounded degree, then $(G,\rho)$ is recurrent almost surely.  At the time,
this was the most difficult case.

Thus, to complete the proof of \cref{thm:BeSc} we need to consider the case
in which the limit $(G,\rho)$ has multiple ends.  We describe below two
different methods to handle this case.

\paragraph{Method 1.}

This proof considers separately three cases, depending on the number of
ends of $G$.  First, by combining Proposition 6.10 and Theorem 8.13 of
\cite{AL07}, we have the following.

\begin{prop}[\cite{AL07}]
  Let $(G,\rho)$ be an ergodic unimodular random rooted graph.  Then $G$
  has one, two or infinitely many ends almost surely.  If $(G,\rho)$ has
  infinitely many ends almost surely, it is invariantly non-amenable.
\end{prop}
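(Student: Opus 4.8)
The plan is to treat the two assertions in turn, reducing each to a mass-transport argument after passing to the ergodic case. The number of ends of $G$ is a measurable function of the unrooted graph that is unchanged by moving the root, so by the ergodic decomposition of unimodular laws I may assume $(G,\rho)$ is ergodic; then the number of ends is an almost sure constant $k\in\{1,2,3,\dots\}\cup\{\infty\}$ (taking $G$ infinite). It remains to rule out $3\le k<\infty$ and to show that $k=\infty$ forces invariant non-amenability.

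The engine for the first assertion is a general principle: an infinite unimodular random rooted graph admits no almost surely finite, nonempty, isomorphism-invariant vertex set $C=C(G)$. To see this, transport mass $f(G,u,v)=|C(G)|^{-1}\mathbbm{1}[v\in C(G)]$ from every $u$ to every $v$. Each vertex sends total mass exactly $1$, while a vertex $w$ receives $|V(G)|/|C(G)|=\infty$ if $w\in C$ and $0$ otherwise; the mass-transport principle then forces $1=\infty\cdot\P[\rho\in C]$, which is absurd whether or not $\P[\rho\in C]$ vanishes. To rule out $3\le k<\infty$ I would produce exactly such a forbidden set from the end structure. When $G$ has finitely many ends $k\ge3$ the ends are separated by finite cuts and are organised by a canonical finite structure tree with $k$ ends; this tree has a nonempty finite set of branch vertices (at most $k-2$ of them), and the finitely many finite cuts incident to those branch vertices determine a nonempty finite set $C(G)\subset V(G)$ defined from $G$ alone. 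This contradicts the principle above, leaving $k\in\{1,2,\infty\}$.

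I expect this construction to be the main obstacle of the first half: minimal separating sets are not unique, so genuine care is needed to pin down the branch set invariantly and to guarantee that the relevant cuts may be taken finite. This is precisely where accessibility of finitely-ended graphs enters, and the structure-tree machinery is the clean device for handling it; once a canonical finite nonempty $C(G)$ is in hand, the contradiction is immediate.

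For the implication $k=\infty\Rightarrow$ invariantly non-amenable I would argue the contrapositive by a Burton--Keane trifurcation count. Call a finite connected set a \emph{trifurcation} if its removal leaves at least three infinite components. Infinitely many ends guarantees that trifurcations occur with positive density: were their density zero the branching would be confined to a bounded region and $G$ would have only finitely many ends, contradicting $k=\infty$. The Burton--Keane mass transport — routing unit mass from the three infinite directions cut by each trifurcation back toward it — then converts positive trifurcation density into a linear lower bound on the expected edge boundary of any finite percolation, so that $\mathbf{i}^{inv}(G)>0$. The delicate point in this half is the standard Burton--Keane bookkeeping that turns ``many ends'' into ``much boundary''; once that estimate is in place, invariant non-amenability follows at once.
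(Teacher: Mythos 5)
The paper does not actually prove this proposition: it is quoted verbatim as a combination of Proposition 6.10 and Theorem 8.13 of Aldous--Lyons \cite{AL07}, so there is no in-paper argument to compare against. Your outline follows essentially the same route as the cited source: the ends trichotomy via the principle that an infinite unimodular random rooted graph admits no canonically defined, nonempty, a.s.\ finite vertex set, and non-amenability from infinitely many ends via a Burton--Keane trifurcation count converted into a uniform lower bound on $\E\bigl[|\partial_E K_\omega(\rho)|/|K_\omega(\rho)|\bigr]$. Your forbidden-set mass transport is correct, and you rightly flag that the real work in the first half is producing a \emph{finite} canonical set $C(G)$ when $3\le k<\infty$; the structure-tree machinery is heavier than necessary (one can take the union of all minimum-cardinality sets whose removal leaves at least three infinite components and show it is finite, since sufficiently many pairwise distant such cutsets would force more than $k$ ends), but your route is workable.

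The one step whose justification as written does not hold up is the claim that trifurcations have positive density ``since otherwise the branching would be confined to a bounded region.'' Zero density of an invariantly defined vertex set does not mean the set is bounded, so this is a non sequitur. The correct fix is another mass transport: if $A=A(G)$ is an automorphism-invariantly defined, a.s.\ nonempty vertex set, have every vertex send unit mass to its nearest element of $A$ (splitting ties); expected mass out is $1$, while expected mass in is $0$ unless $\P[\rho\in A]>0$. Applying this to the set of vertices lying in some connected trifurcating set of diameter at most $r$ (choosing $r$ large enough that this set is a.s.\ nonempty, which infinitely many ends guarantees after an ergodic $0$--$1$ argument) yields the positive density you need, after which the Burton--Keane bookkeeping and the transport $f(u,v)=\mathbbm{1}[v\in K_\omega(u)]\,\mathbbm{1}[v \text{ trifurcating}]/|K_\omega(u)|$ give $\mathbf{i}^{inv}(G)>0$ as you describe. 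With that substitution your argument is sound.
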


We rule out the case of infinitely many ends by showing that local limits
of finite planar graphs are invariantly amenable.  Recall the celebrated
Lipton-Tarjan Planar Separator Theorem \cite[Theorem~2]{LT80} (which can
also be proved using circle packing theory \cite{MTTV97}).

\begin{thm}[\cite{LT80}]
  There exists a universal constant $C$ such that for every $m$ and every
  finite planar graph $G$, there exists a set $S\subset V(G)$ of size at
  most $Cm^{-1/2}|G|$ such that every connected component of $G \setminus
  S$ contains at most $m$ vertices.
\end{thm}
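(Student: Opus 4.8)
The plan is to obtain this decomposition by recursion from a single balanced separator theorem, which I would prove by circle packing along the lines of Miller, Teng, Thurston and Vavasis \cite{MTTV97}; this route is natural here since all the packing machinery is already in place. The intermediate goal is the standard statement that every finite planar graph on $n$ vertices has a set $S$ with $|S| \le C\sqrt{n}$ whose removal leaves two vertex sets, each containing at most a fixed fraction $c<1$ of the vertices, with no edges between them. (Alternatively one could simply invoke this form from \cite{LT80} and proceed directly to the recursion.)

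To produce such an $S$, I would first augment $G$ to a planar triangulation and circle pack it, transporting the packing to the round sphere $S^2 \subset \R^3$ by stereographic projection, so that each vertex $v$ becomes a spherical cap of angular radius $\alpha_v$ and the caps have pairwise disjoint interiors. Disjointness forces the cap areas to sum to at most $4\pi$, hence $\sum_v \alpha_v^2 = O(1)$, and Cauchy--Schwarz then gives $\sum_v \alpha_v = O(\sqrt n)$. Now take the intersection of $S^2$ with a uniformly random plane through the origin, let $S$ be the vertices whose caps this plane meets, and let the two parts be the vertices lying strictly on either side. A fixed cap of angular radius $\alpha_v$ is met with probability $O(\alpha_v)$, so $\E|S| = O(\sum_v \alpha_v) = O(\sqrt n)$, and some plane attains the bound.

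For the balance condition I would first apply a M\"obius transformation of the sphere that centres the configuration --- for instance so that the centroid of the cap centres sits at the origin --- after which standard arguments show that a central plane leaves at most a fixed fraction $c<1$ of the vertices on each side. I expect this centring step to be the main obstacle: the existence of a centring M\"obius map is not elementary and has to be extracted from a topological fixed-point argument (tracking how the centroid moves as one conformally dilates toward a boundary point, and applying a degree or Brouwer argument), exactly as in \cite{MTTV97}.

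With the balanced separator in hand, the stated decomposition follows by iterating it, separating every piece that still exceeds $m$ vertices. This builds a recursion tree in which a piece of size $s$ contributes a separator of size $O(\sqrt s)$ and spawns pieces of size at most $cs$. Grouping pieces into size bands of bounded ratio, within each band the pieces are pairwise incomparable in the tree and hence vertex disjoint, so their sizes sum to at most $n$; the separator cost of a band containing pieces of size about $s$ is therefore $O((n/s)\sqrt s) = O(n/\sqrt s)$. Summed over bands this is a geometric series dominated by its finest band, where pieces first drop to size $m$, giving total separator size $O(n/\sqrt m)$ and the claimed constant $C$. This final recursion is routine once the balanced version is established.
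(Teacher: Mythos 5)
The paper does not prove this statement at all: it is quoted directly from Lipton--Tarjan \cite[Theorem~2]{LT80}, with a parenthetical remark that it can also be proved via circle packing \cite{MTTV97}. Your outline is precisely that circle-packing proof of the balanced separator theorem composed with the standard recursion to reach the $O(|G|/\sqrt{m})$ form, so in spirit you are following exactly the route the authors gesture at. The random-great-circle computation is right (cap areas sum to at most $4\pi$, so $\sum_v \alpha_v^2 = O(1)$ and Cauchy--Schwarz gives $\E|S| = O(\sqrt{n})$), and the recursion is standard, though your claim that pieces within one size band are pairwise incomparable in the recursion tree is not quite true --- a parent of size $s$ can have a child of size up to $\tfrac23 s$ in the same band of ratio $2$ --- but each vertex is covered at most a bounded number of times per band, so the $O(n/\sqrt{s})$ per-band bound survives up to a constant.

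The one genuine flaw is the centring step: normalising so that the \emph{centroid} of the cap centres is at the origin does not give the balance condition. For example, place $99\%$ of the points uniformly around a circle of small height $\delta$ above the equator and the remaining $1\%$ at the south pole with $\delta$ chosen so the vertical components cancel; the centroid is the origin, yet the equatorial plane splits the points $99$ to $1$. What \cite{MTTV97} actually arrange, via the fixed-point/degree argument you allude to, is that the origin is a \emph{centerpoint} of the cap centres, i.e.\ every closed half-space containing the origin contains at least $n/4$ of them; then every great circle leaves at most $3n/4$ points strictly on each side, which is the balance you need. So your identification of the centring step as the main obstacle is correct, but the specific normalisation you propose would not work, and the "standard arguments" you invoke after it require the centerpoint property rather than a vanishing first moment. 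With that substitution (or by simply citing the balanced form from \cite{LT80}, as you note), the argument is complete.
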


\begin{corollary}\label{cor:amen}
  Let $(G,\rho)$ be the local limit of a sequence of finite planar maps
  $G_n$ and suppose $\E[\deg(\rho)]<\infty$.  Then $(G,\rho)$ is invariantly
  amenable and hence has at most two ends.
\end{corollary}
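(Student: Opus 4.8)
The plan is to produce, for every $\epsilon>0$, a finite percolation $\omega$ on $(G,\rho)$ whose open vertices carry almost all of the expected degree, i.e. with $\E[\deg(\rho)]-\E[\deg_\omega(\rho)]<\epsilon$. This shows $\alpha(G)=\E[\deg(\rho)]$ and hence $\mathbf{i}^{inv}(G)=\E[\deg(\rho)]-\alpha(G)=0$, so that $(G,\rho)$ is invariantly amenable. The bound on the number of ends is then immediate: the preceding proposition says that infinitely many ends forces invariant non-amenability, so invariant amenability leaves at most two ends.

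To build $\omega$ I would first apply the Planar Separator Theorem to each finite map $G_n$ with a parameter $m$, obtaining a set $S_n\subset V(G_n)$ with $|S_n|\le Cm^{-1/2}|G_n|$ and with every component of $G_n\setminus S_n$ of size at most $m$. Marking each vertex according to whether it lies in $S_n$ turns the uniformly rooted (hence unimodular) map $(G_n,\rho_n)$ into a unimodular marked graph. Since unimodularity is preserved under weak limits and the event ``the root lies in the separator'' depends only on the root's mark, I would pass to a subsequential limit $(G,\rho,\omega^{(m)})$ along which $(G_n,\rho_n)\to(G,\rho)$; here $\omega^{(m)}:=\{v:v\notin S\}$ is a percolation on $(G,\rho)$ whose clusters have size at most $m$ (a closed condition, which therefore survives the limit) and which satisfies $\P(\rho\notin\omega^{(m)})=\lim_n |S_n|/|G_n|\le Cm^{-1/2}$.

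The main obstacle is that the separator theorem controls only the \emph{number} of separator vertices, not their degrees, so a direct bound on the cluster boundaries breaks down when the separator concentrates on vertices of high degree. The key idea is to defer the degree estimate to the limit, where $\E[\deg(\rho)]<\infty$ supplies genuine uniform integrability via dominated convergence. Concretely I would take $\omega=\omega^{(m,D)}:=\omega^{(m)}\cap\{v:\deg_G(v)\le D\}$, still a finite percolation on $(G,\rho)$, now with all open vertices of degree at most $D$. A one-line mass-transport computation (sending unit mass across each edge from an open root to a closed neighbour, and applying the mass transport principle) gives
\[
\E[\deg(\rho)]-\E[\deg_\omega(\rho)]\ \le\ 2\,\E\!\left[\deg(\rho)\,\mathbbm{1}_{\rho\notin\omega}\right].
\]
Since $\{\rho\notin\omega^{(m,D)}\}\subseteq\{\deg(\rho)>D\}\cup\{\rho\notin\omega^{(m)}\}$, the crude estimate $\E[\deg(\rho)\mathbbm{1}_A]\le\E[\deg(\rho)\mathbbm{1}_{\deg(\rho)>D}]+D\,\P(A)$ together with $\P(\rho\notin\omega^{(m)})\le Cm^{-1/2}$ yields
\[
\E\!\left[\deg(\rho)\,\mathbbm{1}_{\rho\notin\omega}\right]\ \le\ 2\,\E\!\left[\deg(\rho)\,\mathbbm{1}_{\deg(\rho)>D}\right]+C D\,m^{-1/2}.
\]
I would then first choose $D$ large, using dominated convergence \emph{in the limit} (where $\E[\deg(\rho)]<\infty$), to make the tail term smaller than $\epsilon$, and only afterwards choose $m$ large enough that $CD\,m^{-1/2}<\epsilon$. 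This makes $\E[\deg(\rho)]-\E[\deg_\omega(\rho)]$ arbitrarily small, giving $\alpha(G)=\E[\deg(\rho)]$ and invariant amenability. A non-ergodic limit is handled by the usual ergodic-decomposition argument, noting that invariant amenability passes to almost every ergodic component, after which the bound on the number of ends follows from the preceding proposition.
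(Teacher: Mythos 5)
Your proof is correct and follows essentially the same route as the paper: apply the Lipton--Tarjan separator theorem to each $G_n$, pass to a subsequential weak limit of the marked graphs to obtain a finite percolation $\omega^m$ with $\P(\rho\in\omega^m)\ge 1-Cm^{-1/2}$, and use integrability of $\deg(\rho)$ to conclude $\alpha(G)=\E[\deg(\rho)]$. The only difference is in the final bookkeeping, where your degree truncation and mass-transport inequality for $\E[\deg(\rho)]-\E[\deg_\omega(\rho)]$ replace the paper's identity $\E[\deg_{\omega^m}(\rho)]=\E[\mathbbm{1}(\rho,X_1\in\omega^m)\deg(\rho)]$ followed by dominated convergence; both are valid, and yours avoids having to control $\P(X_1\in\omega^m)$.
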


\begin{proof}
  Let $\omega_n^m$ be a subset of $V(G_n)$ such that $G_n \setminus
  \omega_n^m$ has size at most $Cm^{-1/2}|G_n|$ and every connected
  component of $\omega_n^m$ has size at most $m$.  The sequence
  $(G_n,\rho_n,\omega_n^m)$ is tight and therefore has a subsequence
  converging to $(G,\rho,\omega^m)$ for some finitary percolation
  $\omega^m$ on $(G,\rho)$.  Since it is a limit of percolations on finite
  graphs with a uniform root, the limit is unimodular.

  We have that
  \[
  \P(\rho \in \omega^m) \geq 1-Cm^{-1/2} \xrightarrow[m\to\infty]{} 1.
  \]
  Similarly, $\P(X_1\in\omega^m)\to 1$.  By integrability of $\deg(\rho)$,
  we have that
  \[
  \E[\deg_{\omega^m}(\rho)] = \E[\mathbbm{1}(\rho, X_1 \in
  \omega^m)\deg(\rho)]\to \E[\deg(\rho)].
  \]
  Thus $\alpha((G,\rho))= \E[\deg(\rho)]$ and hence $(G,\rho)$ is invariantly
  amenable.
\end{proof}

Finally, we deal with the two-ended case.

\begin{prop}\label{prop:twoends}
  Let $(G,\rho)$ be a unimodular random rooted graph with exactly two ends
  almost surely and suppose $\E[\deg(\rho)]<\infty$.  Then $G$ is recurrent
  almost surely.
\end{prop}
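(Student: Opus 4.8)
The plan is to show that a two-ended unimodular random rooted graph with finite expected degree is recurrent by exhibiting it as (essentially) a quasi-isometric perturbation of $\Z$, and then invoking the standard fact that graphs roughly isometric to $\Z$ are recurrent. The key structural input is that two-endedness is a strong constraint: in a two-ended graph there is a sequence of finite cutsets separating the two ends, and these cutsets must have bounded size in the unimodular setting. **First I would** make this precise by a mass-transport argument. For each finite $K$, two-endedness gives exactly two infinite components of $G\setminus K$; I want to argue that one can choose a canonical bi-infinite sequence of separating cutsets whose sizes have finite expectation, so that $G$ looks like a line of finite ``blocks'' glued in sequence.

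**The cleanest route** is via the invariant amenability already available: by the preceding Proposition, a two-ended unimodular graph is not in the infinitely-many-ends case, and in fact two-ended unimodular graphs are invariantly amenable (this follows from the same circle of ideas, since one can build finite percolations with mean degree tending to $\E[\deg(\rho)]$ by cutting at the separating sets). However, invariant amenability alone does not give recurrence — one needs the one-dimensional structure. **So instead I would** directly produce a transport proving the cutset sizes are integrable. Concretely, fix a notion of the ``minimal'' cutset at distance $n$ from $\rho$ separating the ends; transport unit mass from $\rho$ to each vertex lying in the cutset that is ``responsible'' for separating $\rho$ from one of the ends. The mass-transport principle then bounds the expected cutset size by a constant. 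With $\E[\deg(\rho)]<\infty$ and bounded expected cutset size, the effective resistance from $\rho$ to infinity is comparable to that of $\Z$ with bounded conductances, which is infinite, giving recurrence.

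**The main obstacle** I anticipate is making the ``canonical separating cutset'' construction both well-defined and measurable as a function of $(G,\rho)$, and ensuring the transport I define really does send bounded mass out of each vertex so that the mass-transport principle applies. Two-endedness is an asymptotic property, so extracting a genuinely local, root-equivariant sequence of cutsets requires care: one natural device is to define, for each $n$, the cutset as the inner vertex boundary of the ball $B_n(G,\rho)$ intersected with whichever structure actually disconnects the two ends, and then to symmetrize. **I would** control its expected size by transporting mass from $\rho$ across the cutset separating it from a fixed end and checking that each cutset vertex receives mass from a uniformly bounded range of radii. **Once** integrability of cutset sizes is established, the recurrence conclusion is routine: collapse each block to a single vertex to obtain a graph roughly isometric to $\Z$ with summable resistances along the line, apply the Nash-Williams criterion, and conclude recurrence of the original graph by the fact that recurrence is preserved under bounded perturbations and vertex identifications with finite fibers.
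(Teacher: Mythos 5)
Your overall target --- infinitely many disjoint separating cutsets of controlled size, then Nash--Williams --- is the same engine the paper uses, but the step where you actually produce those cutsets is not carried out, and the mass-transport you sketch does not deliver it. If you transport unit mass from each vertex $u$ to every vertex of the cutset ``responsible'' for $u$, the mass-transport principle gives
\[
\E\bigl[\,|\mathrm{cutset}(\rho)|\,\bigr] \;=\; \E\bigl[\,\#\{u : \rho \in \mathrm{cutset}(u)\}\,\bigr],
\]
i.e.\ the quantity you want to bound \emph{is} the expected mass out; to control it you would need to bound the expected mass \emph{in}, namely the number of vertices whose canonical cutset contains $\rho$, and nothing in your sketch bounds that (many vertices in a long ``block'' can share the same cutset). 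Moreover the whole construction presupposes a canonical, root-equivariant, measurable bi-infinite sequence of cutsets, which you correctly flag as the main obstacle but then do not resolve; this is genuinely delicate and is essentially the content you would need to prove. A further soft spot is the closing appeal to rough isometry with $\Z$: recurrence is not a quasi-isometry invariant without bounded degrees, so you cannot outsource the conclusion to ``graphs roughly isometric to $\Z$ are recurrent''; you must run Nash--Williams directly on cutsets whose sizes you control, which loops back to the unproved step.

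The paper sidesteps all of this with a softer argument worth internalizing. Pass to the reversible, ergodic setting and suppose for contradiction that $G$ is transient. Since some finite set disconnects the two ends almost surely, there exist fixed $R$ and $M$ such that with positive probability the ball $B_R(\rho)$ disconnects the ends and has degree sum at most $M$. By stationarity and ergodicity along the bi-infinite walk, $B_R(X_n)$ has this property for infinitely many $n$ almost surely. Transience forces the walk to eventually stay in one infinite component $G_i$ of the complement of a disconnecting set, and that component is one-ended, so these balls yield infinitely many \emph{disjoint} cutsets of size at most $M$ separating $\rho$ from infinity in $G_i$; Nash--Williams then makes $G_i$ recurrent, a contradiction. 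The point is that the ergodic theorem along the walk replaces your canonical equivariant cutset decomposition: one only needs small disconnecting balls to occur with positive probability at the root, not everywhere and not canonically.
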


\begin{proof}
  We prove the equivalent statement for $(G,\rho)$ reversible.  We may also
  assume that $(G,\rho)$ is ergodic.  Say that a finite set $S$
  \textbf{disconnects} $G$ if $G \setminus S$ has two infinite components.
  Since $G$ is two-ended almost surely, such a set $S$ exists and each
  infinite component of $G \setminus S$ is necessarily one-ended. We call
  these two components $G_1$ and $G_2$.  Suppose for contradiction that $G$
  is transient almost surely.  In this case, a simple random walk $X_n$
  eventually stays in one of the $G_i$, and hence the subgraph induced by
  this $G_i$ must be transient.

  Now, since $G$ is two-ended almost surely, there exist $R$ and $M$ such
  that, with positive probability, the ball $B_R(X_n)$ disconnects $G$ and
  $|B_R(X_n)| \leq M$.  By the Ergodic Theorem this
  occurs for infinitely many $n$ almost surely.  On the event that $X_n$
  eventually stays in $G_i$, since $G_i$ is one-ended, this yields an
  infinite collection of disjoint cutsets of size at most $M$ separating
  $\rho$ from infinity in $G_i$.  Thus, $G_i$ is recurrent by the
  Nash-Williams criterion \cite{LP:book}, a contradiction.
\end{proof}

\cref{thm:BeSc} now follows by combining
\cref{thm:classification,cor:amen,prop:twoends}.

\paragraph{Method 2.}

This proof reduces \cref{thm:BeSc} to \cref{thm:classification} by taking
universal covers.  Given a (not necessarily planar) map $M$, a
\textbf{cover} of $M$ is a map $\tilde M$ together with a surjective graph
homomorphism $\pi:\tilde M \to M$, such that for each vertex $v$, the
homomorphism $\pi$ maps the edges adjacent to $v$ bijectively to the edges
adjacent to $\pi(v)$ and preserves their cyclic ordering, and such that for
each face $f$, $\pi$ maps the edges adjacent to $f$ bijectively to the
edges adjacent to $\pi(f)$.  The \textbf{universal cover} of $M$ is a
cover $\pi : \tilde M \to M$ such that $\tilde M$ is
simply connected.  If $M$ is drawn on a surface $S$, the universal cover
$\tilde M$ of $M$ may be constructed by taking every lift of every edge of
$M$ in $S$ to the universal cover $\tilde S$ of $S$ (see e.g.\ \cite{Hatch}
for the topological notions of universal cover and path lifting).
Alternatively, the universal cover $\tilde M$ may be constructed directly
as in \cite{St05}.  The universal cover is unique in the sense that if
$\pi' : \tilde M' \to M$ is also a universal cover of $M$ then there exists
an isomorphism of maps $f:\tilde M' \to \tilde M$ such that $\pi' = \pi
\circ f$.  Note that if a cover $\pi:\tilde M\to M$ is a cover of a map $M$
and $\tilde M$ is recurrent, the projection
$X_n=\pi(\tilde X_n)$ of a simple random walk $\tilde X_n$ on $\tilde M$ is
a simple random walk on $M$, and it follows that $M$ is also recurrent.

Let $(M,\rho)$ be a unimodular random rooted map with universal cover $\pi
\! : \tilde M \to M$. Let $\tilde \rho$ be chosen arbitrarily from the
preimage $\pi^{-1}(\rho)$; The isomorphism class of the rooted map $(\tilde
M, \tilde \rho)$ does not depend on this choice.  We claim that the random
rooted map $(\tilde M,\tilde \rho)$ is unimodular.
To see this, recall that a random rooted graph is unimodular if and only if it is \emph{involution invariant} \cite[Proposition 2.2]{AL07}, meaning that
\[\E\sum_{v\in V} f(G,\rho,v) = \E\sum_{v\in V}f(G,v,\rho)\]
% the mass transport principle holds for all mass-transports
 % that it suffices to assume the mass
% transport principle for functions
whenever
$f$ is a mass-transport such that $f(G,u,v)$ is zero unless
$u$ and $v$ are adjacent in $G$. The equivalence of unimodularity and involution invariance extends immediately to random rooted maps.
Given such an $f:\mathcal{M}_{\bullet\bullet}\to[0,\infty]$, let $g:\mathcal{M}_{\bullet\bullet}\to[0,\infty]$ be defined to be
\[g(M,u,v) = \sum_{e:\, e^-=u,\,e^+=v}f(\tilde M, \tilde e^-, \tilde e^+),\]
where $\pi:\tilde M \to M$ is the universal cover of $M$ and $\tilde e$ is an arbitrary element of $\pi^{-1}(e)$ for each oriented edge $e$ of $M$ (by uniqueness of the universal cover, the value of $g$ does not depend on this choice).
 % Let $\Gamma$ be the group of deck transformations of $\tilde M$, that is, map homomorphisms $\gamma$ from $\tilde M$ to itself such that
Then $g$ is a mass transport and, letting $\tilde V$ denote the vertex set of $\tilde M$, we have
\[ \sum_{v \in V}g(M,\rho,v) = \sum_{\tilde v\in \tilde V}f(\tilde M,\tilde \rho, \tilde v)
% and
 \quad \text{ and } \quad \sum_{v \in V}g(M,v,\rho) = \sum_{\tilde v\in \tilde V}f(\tilde M, \tilde v,\tilde \rho),\]
% \begin{align*}
% \sum_{\tilde v\in \tilde V}f(\tilde M, \tilde v,\tilde \rho) = \sum_{\tilde v \in \pi^{-1}(v)}\sum_{v\in V}f(\tilde M, \tilde v,\tilde \rho)\\
% \sum_{v \in V}g(M,v,\rho),\end{align*}
so that we deduce involution invariance of $(\tilde M,\tilde \rho)$ from involution invariance of $(M,\rho)$.
  % Such a function on $\tilde M$ projects to a
% function on $M$, and involution invariance of $(\tilde M,\tilde \rho)$ follows from involution invariance of $(M,\rho)$.
% This is easiest to see in the case that $\E[\deg(\rho)]<\infty$, where
% the reversibility of the degree-biasing of $(\tilde M, \tilde \rho)$
% follows immediately from the reversibility of the degree-biasing of $(M,
% \rho)$ and the uniqueness of the universal cover.  Without this
% assumption, it suffices to check \emph{involution invariance} of $(\tilde
% M,\tilde \rho)$, see \cite[Proposition 2.2]{AL07}.
Furthermore, if $(M,\rho)$ is ergodic then $(\tilde M,\tilde \rho)$ is also
ergodic.  Indeed, for every invariance event $A \subseteq
\mathcal{M}_\bullet$, the event $\{(G,\rho) \in \mathcal{M}_\bullet :
(\tilde G, \tilde \rho) \in \mathcal{M}_\bullet\}$ is also invariant to
changing the root, and it follows that if $(M,\rho)$ is ergodic then
$(\tilde M,\tilde \rho)$ is also ergodic.

\begin{proof}[Alternative proof of \cref{thm:BeSc}]
  Let $(G,\rho)$ be a simple, bounded degree, ergodic unimodular random
  rooted triangulation with $\E[\deg(\rho)]=6$.  The universal cover
  $(\tilde G, \tilde \rho)$ of $(G,\rho)$ has all these properties and is
  also one-ended, so that $\tilde G$ is CP parabolic almost surely by
  \cref{thm:classification}.  By the He-Schramm Theorem \cite{HeSc},
  $\tilde G$ is recurrent almost surely, and so $G$ is also recurrent
  almost surely, completing the proof of the Benjamini-Schramm Theorem.
\end{proof}

%%%%%%%%%%%%%%%%%%%%%%%%%%%%%%%%%%%%%%%%%%%%%%%%%%%%%%%%%%%%%%%%%%
\section{Boundary Theory}\label{sec:boundarytheory}

Recall that given a $G$ and a vertex $v$ we write $P^G_v$ and $E^G_v$
to denote the probability and expectation with respect to random walk
$(X_n)_{n \geq 0}$ on $G$ started from $v$.

\subsection{Convergence to the boundary}

Let $(G,\rho)$ be a one-ended, simple, CP hyperbolic reversible random
triangulation. Recall that for a CP hyperbolic $G$ with circle packing
$\cC$ in $\D$, we write $r(v)$ and $z(v)$ for the Euclidean radius and
centre of the circle corresponding to the vertex $v$ in $\cC$ and
$z_h(v),r_h(v)$ for the hyperbolic centre and radius.

Our first goal is to show that the Euclidean radii $r(X_n)$ decay
exponentially along a random walk $(X_n)$.  We initially prove only a
bound, and will prove the existence of the limit rate of decay stated in
\cref{thm:exp_decay} only after we have proven the exit measure is
non-atomic.

\begin{lemma}
  \label{lem:exp_decay}
  Let $(G,\rho)$ be a CP hyperbolic reversible random rooted triangulation
  with $\E[\deg(\rho)]<\infty$ and let $\mathcal{C}$ be a circle packing of
  $G$ in the unit disc.  Let $(X_n)_{n \ge 0}$ be a simple random walk on
  $G$ started from $\rho$.  Then almost surely
  \[
  \limsup_{n \to \infty} \frac{\log r(X_n)}{n} < 0.
  \]
\end{lemma}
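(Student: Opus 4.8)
The plan is to extract exponential decay from two ingredients: the circles live in $\D$, so the packing has finite total area and $\sum_{v}r(v)^2\le 1$, and CP hyperbolicity forces enough expansion that the walk escapes at a definite exponential rate. Since the conclusion is almost sure and the ergodic decomposition preserves almost sure properties, I would first reduce to the case where $(G,\rho)$ is ergodic. As $G$ is CP hyperbolic, it is invariantly non-amenable (this is exactly the content of \cref{thm:classification}), so \cref{thm:BLS} supplies an ergodic percolation $\omega$ with $\mathbf{i}_E(\omega)>0$ all of whose vertices have degree at most some $M$ in $G$.

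The heart of the argument is to first prove exponential decay of $r$ along the trace of the walk on $\omega$. Let $\tau_0<\tau_1<\cdots$ enumerate the visits of the walk to $\omega$ and set $Y_k=X_{\tau_k}$; this trace is the simple random walk on the induced network $\bar\omega$, which is non-amenable (precisely the property for which \cref{thm:BLS} is designed, as noted in the remark following it). By Kesten's theorem its spectral radius is some $\lambda<1$, so by \eqref{eq:CS} the trace transition probabilities obey $p_k^{\bar\omega}(\rho,v)\le\sqrt{M}\,\lambda^k$. The area bound gives $\#\{v\in\omega:r(v)>e^{-ck}\}\le e^{2ck}$, so a union bound yields $P^G_\rho\big(r(Y_k)>e^{-ck}\big)\le\sqrt{M}\,(e^{2c}\lambda)^k$. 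Choosing $c<\tfrac12\log(1/\lambda)$ makes this summable, and Borel--Cantelli gives $r(Y_k)\le e^{-ck}$ for all large $k$, almost surely.

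It then remains to promote this to a bound for $r(X_n)$ at every time $n$. Writing $k(n)$ for the number of $\omega$-visits up to time $n$, the ergodic theorem for the stationary walk gives $k(n)/n\to\P(\rho\in\omega)=:p>0$ (in particular the walk visits $\omega$ infinitely often), so $k(n)\ge(p-o(1))n$. For $n$ in the excursion $[\tau_k,\tau_{k+1})$ I would control $r(X_n)$ by $r(Y_k)$ using the Sharp Ring Lemma (\cref{lem:sharp_ring}), which bounds $|\log r(X_{i+1})-\log r(X_i)|$ by $C\max(\deg(X_i),\deg(X_{i+1}))$; summing along the excursion gives $-\log r(X_n)\ge ck-2C D_k$, where $D_k$ is the degree sum over the $k$-th excursion. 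The sequence $(D_k)$ is stationary with finite mean $\E[\deg(\rho)]/p$, so $D_k/k\to0$ almost surely, and combining the three estimates yields $\limsup_{n}\log r(X_n)/n\le -cp<0$.

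The main obstacle is the non-amenability of the trace network $\bar\omega$: invariant non-amenability of $G$ by itself does not give the walk on $G$ a spectral radius below one, which is exactly why \cref{thm:BLS} (extracting a genuinely non-amenable, bounded-degree percolation) is needed in place of mere transience, and where the area bound is converted into a summable tail. A secondary point is the excursion bookkeeping in the final step; I note that because the Ring Lemma enters only through $\sum\deg$ rather than $e^{C\deg}$, the whole argument needs only the first moment $\E[\deg(\rho)]<\infty$, matching the hypothesis of the lemma.
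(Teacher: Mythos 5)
Your proposal is correct and follows essentially the same route as the paper: reduce to the ergodic case, invoke \cref{thm:classification} and \cref{thm:BLS} to get a bounded-degree non-amenable percolation $\omega$, use the spectral radius bound \eqref{eq:CS} for the induced walk on $\bar\omega$ together with the total-area bound and Borel--Cantelli to get exponential decay along visits to $\omega$, and then interpolate between visits via the Sharp Ring Lemma and the Ergodic Theorem. The only cosmetic differences are that the paper verifies $\mathbf{i}_E(\bar\omega)\geq \mathbf{i}_E(\omega)/M$ explicitly rather than quoting the remark, and handles the excursion degree sums by differencing the ergodic averages $\frac{1}{n}\sum_{i\le n}\deg(X_i)$ rather than by appealing to stationarity of the per-excursion sums $D_k$; both are equivalent.
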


\begin{proof}
  We may assume that $(G,\rho)$ is ergodic, else we may take an ergodic
  decomposition.  By \cref{thm:classification} $(G,\rho)$ is invariantly
  non-amenable.  By \cref{thm:BLS}, there is an ergodic percolation
  $\omega$ on $G$ such that $\deg(v)$ is bounded by some $M$
  for all $v\in \omega$ and $\mathbf{i}_E(\omega)>0$ almost surely.

  Recall the notion of an \textbf{induced random walk} on $\omega$: let
  $N_m$ be the $m$th time $X$ is in $\omega$ (that is, $N_0 = \inf\{n \geq
  0 : X_n \in \omega\}$ and inductively $N_{m+1}=\inf \{n > N_m : X_n \in
  \omega\}$).  The \textbf{induced network} $\bar\omega$ is defined to be the
  weighted graph on the vertices of $\omega$ with edge weights given by
  \[
  \bar{w}(u,v) = \deg(u) P^G_u(X_{N_1}=v)
  \]
  so that $X_{N_m}$ is the random walk on the weighted graph $\bar \omega$.
  Note that $\bar\omega$ may have non-zero weights between vertices which
  are not adjacent in $G$, so that $\bar\omega$ is no longer a percolation
  on $G$, and may not even be planar.

  We first claim that $\bar\omega$ with the weights $\bar{w}$ of the
  induced random walk also has positive Cheeger constant.  Indeed, the
  weight of a vertex $v \in \bar\omega$ is just its degree in $G$
  and so is between $1$ and $M$ for any vertex.  The edge boundary
  in $\bar\omega$ of a set $K$ is at least the number of edges connecting
  $K$ to $V\setminus K$ in $\omega$.  Thus $\mathbf{i}_E(\bar \omega) \geq
  \mathbf{i}_E(\omega)/M > 0$.  It follows that the induced
  random walk on $\omega$ has spectral radius less than one
  \cite{LP:book}.

  Now, as in \eqref{eq:CS}, Cauchy-Schwarz gives that, for some $c>0$,
  \[
    P^G_\rho(X_{N_m} = v) \leq  M^{1/2}\exp(-cm)
  \]
  for every vertex $v$ almost surely.  Since the total area of all circles
  in the packing is at most $\pi$,  with $c$ as above, there
  exists at most $e^{cm/2}$ circles of radius greater than
  $e^{-cm/4}$ for each $m$. Hence
  \begin{align*}
    P^G_\rho\left( r(X_{N_m}) \geq e^{-cm/4} \right)
    &= \sum_{v : r(v) \geq e^{-cm/4}} P^G_\rho\left(X_{N_m} = v\right) \\
    &\le \left|\left\{ v : r(v) \geq e^{-cm/4} \right\}\right| \cdot
    M^{1/2} e^{-cm} \\
    &\le M^{1/2} e^{-cm/2}.
  \end{align*}
  These probabilities are summable, and so Borel-Cantelli implies that
  almost surely for large enough $m$,
  \[
    r(X_{N_m}) \leq e^{-cm/4}.
  \]
  That is, we have exponential decay of the radii for the induced walk:
  \begin{equation}
    \limsup_{m \to \infty} \frac{\log r(X_{N_m})}{m} \leq -\frac{c}{4}.
  \end{equation}

  It remains to prove that the exponential decay is maintained between
  visits to $\omega$.  By stationarity and ergodicity of $(G,\rho,\omega)$,
  the density of visits to $\omega$ is $\P(\rho\in\omega) \neq 0$.  That is,
  \[
  \lim_{m \to \infty} \frac{N_m}{m} = \P(\rho\in\omega)^{-1}
  \]
  almost surely.  In particular,
  \begin{equation}
    \label{eq:exp_decay1}
    \limsup_{m \to \infty} \frac{\log r(X_{N_m})}{N_m} < 0.
  \end{equation}

  Given $n$, let $m$ be the number of visits to $\omega$ up to time $n$, so
  that $N_m\leq n < N_{m+1}$.  Since $m/N_m$ converges, $n/N_m\to 1$.
  By the Sharp Ring Lemma (\cref{lem:sharp_ring}),
  \[
  \frac{r(X_n)}{r(X_{N_m})} \le \exp \left(C\sum_{i=N_m}^n{\deg(X_i)}\right)
  \]
  so that
  \begin{equation}
    \frac{\log r(X_n)}{n} \leq \frac{\log r(X_{N_m})}{n}
    + \frac{C}{n} \sum_{i=N_m}^{n}\deg(X_i). \label{eq:exp_decay2}
  \end{equation}
  Again, the Ergodic Theorem gives us the almost sure limit
  \[
  \lim \frac{1}{n} \sum^n_{i=0}\deg(X_i) = \E[\deg(\rho)].
  \]
  Thus almost surely
  \begin{align*}
    \lim \frac{1}{n} \sum_{i=N_m}^{n}\deg(X_i) &=
    \lim\frac{1}{n}\sum_{i=0}^{n}\deg(X_i)- \lim
    \frac{N_m}{n}\lim\frac{1}{N_m}\sum_{i=0}^{N_m}\deg(X_i) \\
    &=\E[\deg(\rho)]-\E[\deg(\rho)]=0.
  \end{align*}
  Combined with \eqref{eq:exp_decay1} and \eqref{eq:exp_decay2} we get
  \begin{align*}
  \limsup_{{n\to \infty}} \frac{\log r(X_n)}{n}
  &= \limsup_{{n\to \infty}} \frac{\log r(X_{N_m})}{n} \\
  &= \lim_{{n\to \infty}} \frac{N_m}{n}
  \limsup_{{n\to \infty}} \frac{\log r(X_{N_m})}{N_m} < 0.  \qedhere
  \end{align*}
\end{proof}

\begin{proof}[Proof of \cref{thm:boundary}, \cref{thm:convergence}]
  We prove the equivalent statement for $(G,\rho)$ reversible with
  $\E[\deg(\rho)]<\infty$, putting us in the setting of
  \cref{lem:exp_decay}.  The path formed by drawing straight lines between
  the Euclidean centres of the circles along the random walk path has
  length $r(\rho)+2\sum_{i\geq1} r(X_i)$, which is almost surely finite by
  \cref{lem:exp_decay}. It follows that the sequence of Euclidean centres
  is Cauchy almost surely and hence converges to some point, necessarily in
  the boundary.  Because the radii of the circles $r(X_n)$ converge to zero
  almost surely, the hyperbolic centres must also converge to the same
  point.
\end{proof}

%%%%%%%%%%%%%%%%%%%%%%%%%%%%%%%%%%%%%%%%%%%%%%%%%%%%%%%%%%%%%%%%%
\subsection{Full support and non-atomicity of the exit measure}
\label{sec:exitmeasure}

We now prove item $2$ of \cref{thm:boundary}, which states that the exit
measure on the unit circle has full support and no atoms almost surely.  We
start with a general observation regarding atoms in boundaries of
stationary graphs.

We say that two metrics $d_1$ and $d_2$ on the vertex set $V$ of a graph
$G$ are \textbf{compatible} if the identity map from $V$ to itself extends
to an isomorphism between the completions of the metric spaces $(V,d_1)$
and $(V,d_2)$ (or, equivalently, if the same sequences are Cauchy for $d_1$
and $d_2$).  For example, the Euclidean distances between centres of
circles corresponding to vertices in different circle packings of a
one-ended planar triangulation in either the full plane or the unit disc
are compatible by \cref{thm:rigidity}.
% Let $A\subseteq \mathcal{G}_\bullet$ be rerooting-invariant and suppose that $d=d^G_\rho(u,v)$ is a positive measurable function defined on the space of triply rooted graphs $(G,\rho,u,v)$ (i.e., graphs with an ordered triple of distinguished vertices with an appropriate variant of the local topology), and suppose that
% for every graph $G$ in $A$ (i.e., every locally finite connected graph $G$ such that $(G,u)\in A$ for every vertex $u$ of $G$), we have
We define a \textbf{compatible family of metrics} to be a Borel function
$d=d^G_u(v,w)$ from the space of triply rooted graphs (i.e., the set of
isomoprhism classes of graphs with an ordered triple of distinguished
vertices, equipped with an appropriate variant of the local topology) to
the positive reals such that for every locally finite, connected graph
$G=(V,E)$,
\begin{enumerate}
\item $d^G_u(\cdot,\cdot)$ is a metric on $V$ for every vertex $u$ of $G$,
  and
\item the metrics $d^G_u$ and $d^G_v$ are compatible for each two vertices
  $u$ and $v$ of $G$.
\end{enumerate}

Given a compatible family of metrics $d$ and a rooted graph $(G,\rho)$, the
completion $\bar{V}$ of $V$ with respect to $d^G_\rho$ has a topology that
does not depend on the choice of root vertex $\rho$.  Such a completion is
called an \textbf{invariant completion}.  Compatible families
of metrics for maps are defined similarly.  In some cases of interest, a
compatible family of metrics $d$ might only be defined for graphs or maps
in some rerooting-invariant class (e.g.\ circle packings are defined for
the class of one-ended simple planar triangulations).  In this case, we may
extend $d$ arbitrarily to all graphs or maps by setting it to be the
discrete metric where it is not defined.

\begin{lemma}\label{lem:atom}
  Let $d$ be a compatible family of metrics, let $(G,\rho)$ be a stationary
  random rooted graph or map, and let $\bar V$ be the completion of $V$
  with respect to $d^G_\rho$.  Suppose that the random walk on $G$ converges
  almost surely to a point in the boundary $\partial V = \bar{V}\setminus
  V$.  Then the exit measure on $\partial V$ is either trivial (concentrated
  on a single point) or non-atomic almost surely.
\end{lemma}

For each CP hyperbolic one-ended simple planar triangulation $G$, take a
circle packing of $G$ in $\D$, normalized so that the circle corresponding
to $u$ is centred at the origin, and let $d=d^G_u(v,w)$ be the Euclidean
distance between the Euclidean centres of the circles corresponding to $v$
and $w$.  By circle packing rigidity (\cref{thm:rigidity}), this circle
packing is unique up to rotations, so that the metric $d^G_u$ is well
defined.  The metrics $d^G_u$ and $d^G_v$ are also compatible for every
pair of vertices $u$ and $v$ in $G$.  Thus, after an arbitrary extension to
other maps, $d$ is an compatible family of metrics.

Another natural example, defined for all graphs, is the Martin
compactification.  As a consequence of this lemma, for any stationary
random graph, the exit measure on the Martin boundary (which can be defined
as the completion of $V$ with respect to a compatible family of metrics,
see e.g.\ \cite{Woess}) is almost surely either non-atomic or trivial.  In
particular, this gives an alternative proof of a recent result of
Benjamini, Paquette and Pfeffer \cite{BPP14}, which states that for every
stationary random graph, the space of bounded harmonic functions on the
graph is either one dimensional or infinite dimensional almost surely.  A
straightforward extension of this lemma applies to random families of
metrics.

\begin{proof}
  Condition on $(G,\rho)$. For each atom $\xi$ of the exit measure, define the
  harmonic function $h_{\xi} (v) = P^G_v( \lim X_n = \xi)$. By L\'{e}vy's
  0-1 law,
  \[
  h_{\xi}(X_n)\xrightarrow[]{a.s.}
    \mathbbm{1}( \lim X_n = \xi)
  \]
  for each atom $\xi$ and
  \[
  P^G_{X_n}(\lim X_n \text{ is an atom}) = \sum_\xi h_\xi(X_n)
  \xrightarrow[]{a.s.} \mathbbm{1}(\lim X_n \text{ is an atom}).
  \]
  Define $M(G,v) = \max_{\xi} h_{\xi}(v)$ to be the maximal atom size. Since the topology of $\bar V$ does not depend on the choice of root, the sequence $M(G,X_n)$ is stationary.
  Combining the two above limits, we find the almost sure limit
  \[
  M(G,X_n) \xrightarrow[]{a.s.} \mathbbm{1}(\lim X_n \text{ is an atom}).
  \]
 Since $M(G,X_n)$ is a stationary sequence with limit in $\{0,1\}$, it follows that
  $M(G,\rho) \in \{0,1\}$ almost surely.  That is,
  either there are no atoms in the exit measure or there is a single atom
  with weight 1 almost surely.
\end{proof}

\begin{proof}[Proof of \cref{thm:boundary}, item 2.]
  We may assume that $(G,\rho)$ is ergodic.  Applying \cref{lem:atom} to
  the $\deg(\rho)$-biasing of $(G,\rho)$, we deduce that the exit measure
  has at most a single atom.
  Next, we rule out having a single atom.
   Suppose for contradiction
  that there is a single atom $\xi = \xi(\cC)$ almost surely for some (and
  hence every) circle packing $\cC$ of $G$ in $\D$.  Applying the M\"obius
  transformation
  \[
  \Phi(z) = -i\frac{z+\xi}{z-\xi},
  \]
  which maps $\D$ to the upper half-plane $\H=\{\Im(z)>0\}$ and $\xi$ to
  $\infty$, gives a circle packing of $G$ in $\H$ such that the random walk
  tends to $\infty$ almost surely.  Since circle packings in $\H$ are
  unique up to M\"obius transformations and the boundary point $\infty$ is
  determined by the graph $G$, such a circle packing in $\H$ is unique up
  to M\"obius transformations of the upper half-plane that fix $\infty$,
  namely $az+b$ with real $a\geq 0$ and $b$ (translations and dilations).

  Inverting around the atom has therefore given us a way of canonically
  endowing $G$ with Euclidean geometry: if we draw $G$ in $\H$ using straight
  lines between the Euclidean centres of the circles in the half-plane
  packing, the angles at the corners around each vertex $u$ are independent
  of the original choice of packing $\mathcal{C}$.
  Transporting each angle from $u$ to each of the three vertices forming
  the corresponding face $f$ as in the proof of \cref{thm:classification}
  implies that $\E[\deg(\rho)]=6$.  This contradicts \cref{thm:classification}
  and the assumption that $(G,\rho)$ is CP hyperbolic almost surely.  This
  completes the proof that the exit measure is non-atomic.

  \medskip

  To finish, we show that the exit measure has support $\partial\D$.
  Suppose not.  We will define a mass transport on $G$ in which each vertex
  sends a mass of at most one but some vertices receive infinite mass,
  contradicting the mass transport principle.

  Consider the complement of the support of the exit measure, which is a
  union of disjoint open intervals $\bigcup_{i \in I}(\theta_i,\psi_i)$ in
  $\partial \D$.  Since the exit measure is non-atomic, $\theta_i \neq
  \psi_i$ mod $2\pi$ for all $i$.

  \begin{figure}
    \centering
    \begin{subfigure}[t]{0.45\textwidth}
      \includegraphics[width=\textwidth]{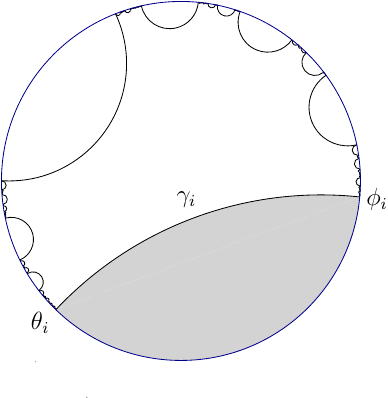}
      \caption{A geodesic $\gamma_i$ is drawn over each component of the
        complement of the support. Circles contained in the shaded area are
        in $A_i$.}
    \end{subfigure}
    \quad
    \begin{subfigure}[t]{0.45\textwidth}
      \includegraphics[width=\textwidth]{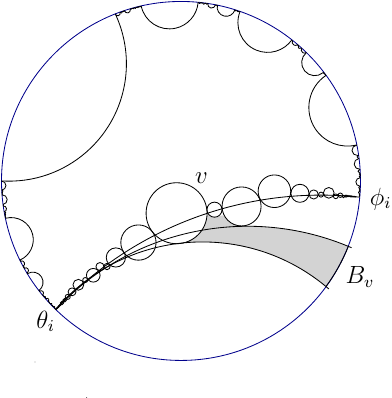}
      \caption{The vertex $v$ receives mass from circles with
        hyperbolic centres in the shaded area.}
    \end{subfigure}
    \caption{An illustration of the mass transport used to show the exit
      measure has full support.}
    \label{fig:support}
  \end{figure}

  For each such interval $(\theta_i,\psi_i)$, let $\gamma_i$ be the
  hyperbolic geodesic from $e^{i\theta_i}$ to $e^{i\psi_i}$.  That is,
  $\gamma_i$ is the intersection with $\D$ of the circle passing through
  both $e^{i\theta_i}$ and $e^{i\psi_i}$ that intersects $\partial \D$ at
  right angles.  Let $A_i$ be the set of vertices such that the circle
  corresponding to $v$ is contained in the region to the right of
  $\gamma_i$, i.e.\ bounded between $\gamma_i$ and the boundary interval
  $(\theta_i,\psi_i)$ (see \cref{fig:support}(a)).

  Each vertex is contained in at most one such $A_i$.  For each vertex $u$
  in $A_i$, consider the hyperbolic geodesic ray $\gamma_u$ from the
  hyperbolic centre $z_h(u)$ to $e^{i\theta_i}$.  Define a mass transport
  by sending mass one from $u\in A_i$ to the vertex $v$ corresponding to
  the first circle intersected by both $\gamma_u$ and $\gamma_i$.  There
  may be no such circle, in which case no mass is sent from $u$.  Since the
  transport is defined in terms of the hyperbolic geometry and the support
  of the exit measure, it is a function of the isomorphism class of
  $(G,u,v)$ by \cref{thm:rigidity}.

  Let $\phi \in (\theta_i, \psi_i)$ and consider the set of vertices whose
  corresponding circles intersect both $\gamma_i$ and the geodesic
  $\gamma_\phi$ from $e^{i\phi}$ to $e^{i\theta_i}$. As $\phi$ increases
  from $\theta_i$ to $\psi_i$, this set is increasing.  It follows that for
  each fixed $v$ for which the circle corresponding to $v$ intersects
  $\gamma_i$, the set $B_v$ of $\phi \in (\theta_i, \psi_i)$ for which the
  circle corresponding to $v$ is the first circle intersected by
  $\gamma_\phi$ that also intersects $\gamma_i$ is an interval (see
  \cref{fig:support}(b)).

  Since there are only countably many vertices, $B_v$ must have positive
  length for some $v$.  Thus there is an open neighbourhood of the boundary
  in which all the circles send mass to this vertex.  This vertex therefore
  receives infinite mass, contradicting the mass transport principle.
\end{proof}

%%%%%%%%%%%%%%%%%%%%%%%%%%%%%%%%%%%%%%%%%%%%%%%%%%%%%%%%%%%%%%%%%%%%%%%%%%%%%%%
\subsection{The unit circle is the Poisson boundary}
\label{subsec:poisson}

The Poisson-Furstenberg boundary \cite{Furst} (or simply the Poisson
boundary) of a graph (or more generally, of a Markov chain) is a formal way
to encode the asymptotic behaviour of random walks on $G$.  We refer the
reader to \cite{Kai,Pete,LP:book} for more detailed introductions.

Recall that a function $h: V(G) \to \R$ is said to be \textbf{harmonic} if
\[
h(v)=\frac{1}{\deg(v)} \sum_{u \sim v} h(u)
\]
for all $v \in V(G)$ --- or, equivalently, if $h(X_n)$ is a martingale.
Let $\bar{G} = G \cup \partial G$ be a compactification of $G$ so that the
random walk $X_n$ converges almost surely. For each $v\in V(G)$ we let
$P^G_v$ denote the law of the limit of the random walk started at $v$. Every bounded Borel function $g$ on $\partial G$ extends to a harmonic
function \[h(v) := E^G_v \big[g(\lim X_n)\big]\] on $G$.
Such a compactification is called a \textbf{realisation of the Poisson
  boundary} of $G$ if \emph{every} bounded harmonic funtion $h$ on $G$ may be
represented as an extension of a  boundary function in this way.

Harmonic functions can be used to encode asymptotic behaviour of the random
walk as follows.
Let $G^\mathbb{N}$ be the space of sequences in $G$.
  The shift operator on $G^\mathbb{N}$ is defined by
$\theta(x_0,x_1,\dots) = (x_1,x_2,\dots)$, and  we write $\cI$ for the
$\sigma$-algebra of shift-invariant events $A = \theta A$.
Be careful to note the distinction between invariant events for the random
walk on $G$, just defined, and invariant events for the sequence
$(G,(X_{n+k})_{n \in \Z})_{k \in \Z}$ as defined in
\cref{subsec:reversibility}.

There is an isomorphism between the space of bounded harmonic functions on
$G$ and $L^\infty(G^\N,\cI)$ given by
\begin{align*}
  h \mapsto g(x_1,x_2,\dots) &= \lim_{n \to \infty} h(x_n),  &
  g \mapsto h(v) &= E^{G}_v\big[g(v,X_1,X_2,\dots)\big].
\end{align*}
The limit here exists $P^G_v$-almost surely by the bounded Martingale
Convergence Theorem, while the fact that these two mappings are inverses of
one another is a consequence of L\'evy's 0-1 Law: If $h(v) = E^G_v[g(X)]$
is the harmonic extension of some invariant function $g$, then
\begin{equation}
  \label{eq:levy}
  h(X_n) \xrightarrow{a.s.} g(\rho,X_1,X_2,\dots).
\end{equation}

As a consequence of this isomorphism, and since the span of simple
functions is dense in $L^\infty$, the topological boundary $\partial G$ is a realisation of the
Poisson boundary of $G$ if and only if for every invariant event $A$ there
exists a Borel set $B\subset\partial G$ such that the symmetric difference
$A\Delta\{\lim X_n\in B\}$ is $P^G_v$-null.

For example, the Poisson boundary of a tree may be realised as its space of
ends, and the one-point compactification of a transient graph G gives rise
to a realisation of the Poisson boundary if and only if $G$ is Liouville
(i.e.\ the only bounded harmonic functions on $G$ are constant). The
Poisson boundary of any graph may be realised as the graph's
Martin boundary \cite{Woess}, but this is not always the most natural
construction.

\medskip

Our main tools for controlling harmonic functions will be L\'evy's 0-1 Law
and the following consequence of the Optional Stopping Theorem.  For a set
$W\subset V$ of vertices, let $T_W$ be the first time the random walk
visits $W$.  If $h$ is a positive, bounded harmonic function, the Optional
Stopping Theorem implies
\begin{equation}\label{eq:ostcor}
  h(v) \geq E^G_v[h(X_{T_W})\mathbbm{1}_{T_W<\infty}]
  \geq P^G_v(\text{Hit } W) \inf\{h(u) : u \in W\}.
\end{equation}

\begin{lem}\label{lem:Poissonlem}
  Let $(G,\rho)$ be a CP hyperbolic reversible random rooted triangulation
  with $\E[\deg(\rho)] < \infty$, and let $(X_n)_{n\in\Z}$ be the reversible
  bi-infinite random walk.  Then almost surely
  \[
  P^G_{X_n}\left(\text{hit } \{X_{-1},X_{-2},\dots\}\right) \to 0.
  \]
\end{lem}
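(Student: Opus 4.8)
The plan is to work in the reversible setting given in the statement (passing to ergodic components if convenient, as the conclusion is an almost sure statement) and to combine a soft supermartingale argument with a stationarity computation. Recall that, given $(G,\rho)$, the forward walk $(X_n)_{n\ge 0}$ and the backward walk $(X_{-n})_{n\ge 0}$ are independent simple random walks from $\rho$. I would first condition on $G$ together with the entire past, set $W=\{X_{-1},X_{-2},\dots\}$, and define $h(v)=P^G_v(\text{hit }W)$. Then $h$ is superharmonic (harmonic off $W$ and equal to $1$ on $W$), so along the forward walk the quantity $F(n):=P^G_{X_n}(\text{hit }W)=h(X_n)$ is a bounded supermartingale and converges almost surely to some limit $L\in[0,1]$. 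It therefore suffices to prove that $\E[F(n)]\to 0$, since then $\E[L]=0$ by bounded convergence and hence $L=0$ almost surely, which is the claim.

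To control $\E[F(n)]$ I would replace the fresh walk from $X_n$ by the continuation of the bi-infinite walk: conditionally on $\cF_n=\sigma\big(G,(X_k)_{k\le n}\big)$ the sequence $(X_{n+t})_{t\ge 0}$ is a simple random walk from $X_n$ independent of $\cF_n$, and $W$ is $\cF_n$-measurable, so
\[
\E[F(n)]=\P\big(\{X_j:j\ge n\}\cap\{X_m:m\le -1\}\ne\emptyset\big)
=\P\big(\{X_j:j\ge 0\}\cap\{X_m:m\le -n-1\}\ne\emptyset\big)=:\P(A_n),
\]
where the second equality is stationarity of $(G,(X_k)_{k\in\Z})$ under the shift by $n$. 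The events $A_n$ are decreasing in $n$, so $\P(A_n)\to\P\big(\bigcap_n A_n\big)$.

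It remains to identify $\bigcap_n A_n$ and show it is null. This event says that the forward trajectory meets the past at times arbitrarily far back; since the backward walk is transient and so visits every vertex finitely often, on $\bigcap_n A_n$ the forward and backward trajectories must share infinitely many vertices. I would rule this out geometrically: by \cref{thm:convergence} the forward walk converges to a boundary point $\Xi\in\partial\D$ and the backward walk to a point $\Xi'\in\partial\D$, and since these are limits of two conditionally independent walks from $\rho$ whose exit measure is non-atomic by \cref{thm:exitmeasure}, we have $\Xi\ne\Xi'$ almost surely. Choosing disjoint neighbourhoods $U\ni\Xi$ and $U'\ni\Xi'$ in $\bar\D$, all but finitely many of the hyperbolic centres $z_h(X_j)$, $j\ge 0$, lie in $U$ and all but finitely many of the $z_h(X_m)$, $m\le -1$, lie in $U'$; a shared vertex would have its hyperbolic centre in both $U$ and $U'$, so only finitely many vertices can be shared. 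Hence $\P\big(\bigcap_n A_n\big)=0$, giving $\E[F(n)]\to 0$ and completing the proof.

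The main obstacle is the middle step: rewriting the hitting probability $F(n)$, which a priori involves a fresh walk, as a statement about the genuine bi-infinite trajectory so that stationarity applies, and then correctly recognising the decreasing limit $\bigcap_n A_n$ as an intersection event of the forward and backward paths. Once this is in place, transience of the backward walk together with the separation $\Xi\ne\Xi'$ of the two boundary limits makes the limiting event null with no quantitative escape or Green-function estimates; this is what allows the argument to run for the full graph $G$, whose spectral radius may equal one even though $(G,\rho)$ is invariantly non-amenable by \cref{thm:classification}.
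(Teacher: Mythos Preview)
Your argument is correct and genuinely different from the paper's. The paper produces an explicit bounded harmonic function separating the two ends: it takes $h(v)=P^G_v(\Xi\in U)$ for a basic open set $U\subset\partial\D$ containing $\Xi_-$ but not $\Xi_+$, uses L\'evy's 0--1 law to get $h(X_n)\to 0$ while $a:=\inf_{m\ge 1}h(X_{-m})>0$, and then the optional stopping bound $h(X_n)\ge a\cdot P^G_{X_n}(\text{hit }\{X_{-1},X_{-2},\dots\})$ finishes. You instead work directly with the superharmonic function $h(v)=P^G_v(\text{hit }W)$, observe that $h(X_n)$ is a bounded supermartingale, and reduce to showing $\E[h(X_n)]\to 0$; the key step is the use of stationarity of the bi-infinite walk to rewrite $\E[h(X_n)]=\P(A_n)$ for the decreasing events $A_n=\{\{X_j:j\ge 0\}\cap\{X_m:m\le -n-1\}\ne\emptyset\}$, and then transience together with $\Xi_+\ne\Xi_-$ forces $\bigcap_n A_n$ to be null. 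Both proofs ultimately rest on the same input, non-atomicity of the exit measure (so that $\Xi_+\ne\Xi_-$), but the paper channels it through a specific harmonic function and optional stopping, while you channel it through stationarity and a purely geometric finiteness of the intersection of the forward and backward trajectories. Your route avoids constructing any auxiliary harmonic function and is perhaps more transparently ``soft''; the paper's route is more self-contained in that it does not need to invoke the Markov property to identify $F(n)$ with a statement about the actual trajectory.
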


\begin{proof}
  Let $\mathcal{C}$ be a circle packing of $G$ in $\D$.  Recall from
  \cref{thm:boundary}, \cref{thm:convergence} that for a random walk $X_n$, almost surely $\Xi
  := \lim z(X_n)$ exists, and its law is non-atomic and of full support on
  $\partial\D$.  Since the exit measure is non-atomic, the limit points
  $\Xi_+ := \lim z(X_n)$ and $\Xi_- := \lim z(X_{-n})$ are almost surely
  distinct.

  Let $\{U_i\}_{i \in I}$ be a countable basis for the topology of
  $\partial\D$ (say, intervals with rational endpoints) and for each $i$
  let $h_i$ be the harmonic function
  \[
  h_i(v) = P^G_v\left(\Xi \in U_i\right).
  \]
  By L\'evy's 0-1 law, $h_i(X_n) \to \mathbbm{1}\!\left(\Xi_+\in
    U_i\right)$ for every $i$ almost surely.  Thus there
  exists some $i_0$ with $\Xi_- \in U_{i_0}$ and $\Xi_+
  \notin U_{i_0}$.  In particular there is almost surely some bounded
  harmonic function $h=h_{i_0}\geq 0$ with $h(X_n)
  \xrightarrow[n\to\infty]{} 0$ and
  \[
  a := \inf\{h(X_{-m}) : m>0\} > 0.
  \]
  By \eqref{eq:ostcor}
  \[
    h(X_n) \geq a \cdot P^G_{X_n}\left(\text{hit }
      \{X_{-1},X_{-2},\dots\}\right).
  \]
  Since $h(X_n)\to 0$, we almost surely have
  \[
    P^G_{X_n}\left(\text{Hit } \{X_{-1},X_{-2},\dots\}\right) \to 0.
    \qedhere
  \]
\end{proof}

\begin{proof}[Proof of \cref{thm:boundary}, \cref{thm:Poisson}]
  We prove the equivalent statement for $(G,\rho)$ reversible with
  $\E[\deg(\rho)]<\infty$, and may assume that $(G,\rho)$ is ergodic.

  We need to prove that for every invariant event $A$ for the simple
  random walk on $G$ with $P^G_\rho(A)>0$, there is a Borel set $B
  \subset \partial \D$ such that
  \[
%  P^G_\rho\left(A\, \Delta \left\{\Xi_+ = \lim_{n \to \infty} z(X_n) \in B
%    \right\} \right) = 0.
  P^G_\rho\left(A\, \Delta \left\{\Xi_+ \in B
    \right\} \right) = 0,
  \]
  where $\Xi_+=\lim z(X_n)$.
  Let $h$ be the harmonic function $h(v) = P^{G}_v(A)$, and let $B$ be the
  set of $\xi \in \partial \D$ such that there exists a path
  $(\rho,v_1,v_2,\dots)$ in $G$ such that for some $c>0$,
  \[
  h(v_i) \to 1,
  \quad
  z(v_i) \to \xi,
  \quad \text{ and }\quad
  |\xi -z(v_i)| < 2e^{-ci},
  \]
  where $|\cdot|$ denotes Euclidean length.  The condition
  on exponential decay of $|\xi - z(v_i)|$ can be omitted by invoking the
  theory of universally measurable sets. We are spared from
  this by \cref{lem:exp_decay}.
  With an explicit rate of convergence, it is straightforward to see that
  $B$ is Borel: Let $B_{c,m,\eps,n}$ be the open set of $\xi\in\partial\D$
  such that there exists a path $\rho,v_1,\dots,v_n$ in $G$ such that $h(v_i)>1-\eps$
  for every $i\geq m$, and with $|\xi-z(v_i)| < 2e^{-ci}$. Then $B =
  \bigcup_c \bigcap_\eps \bigcup_m \bigcap_n B_{c,m,\eps,n}$, where $m,n$
  are integers and $c,\eps$ are positive rationals, and it follows that $B$ is Borel.

  If the random walk has $(\rho,X_1,\dots) \in A$ then, by L\'{e}vy's 0-1
  law and \cref{lem:exp_decay}, the limit point $\Xi_+$ is in $B$
  almost surely.  In particular, if $P^G_\rho(A)>0$, then the exit measure
  of $B$ is positive.  It remains to show that $(\rho,X_1,\dots) \in A$
  almost surely on the event that $\Xi_+ \in B$.

  Consider the two intervals $L$ and $R$ separating the almost surely
  distinct limit points $\Xi_+$ and $\Xi_-$.  Let $p^n_L$
  and $p^n_R$ be the probabilities that a \emph{new, independent} random
  walk started from $X_n$ hits the boundary in the interval $L$ or $R$
  respectively.  Since the exit measure is non-atomic almost surely, the
  event
  \[
  E_n=\{\min(p_L^n,p_R^n) > 1/3 \}
  \]
  has positive probability (in fact, it is not hard to see that each of the
  random variables $p^n_L$ is uniformly distributed on $[0,1]$ so that
  $E_n$ has probability $1/3$).  Moreover, the value of $p^n_L$ does not
  depend on the choice of circle packing and is therefore a function of
  $(G,(X_{n+k})_{k \in \Z})$.  By the stationarity and ergodicity
  of $(G,(X_n)_{n \in \Z})$, the events $E_n$ happen infinitely often
  almost surely (see \cref{fig:poisson}).

  \begin{figure}
    \begin{center}
    \includegraphics[width=0.8\textwidth]{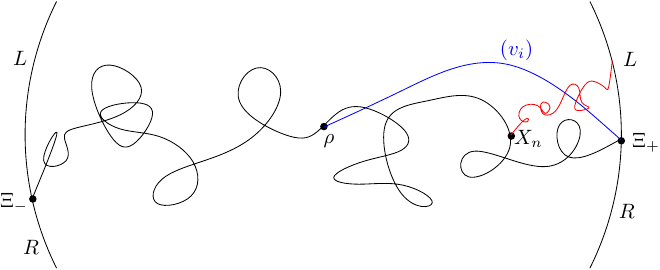}
    \end{center}
    \caption{For infinitely many $n$, a new random walk (red) started from
      $X_n$ has probability at
      least $1/3$ of hitting each of $L$ and $R$, and probability at least
      $1/4$ of hitting the path $(v_i)$ (blue).}
    \label{fig:poisson}
  \end{figure}

  Now condition on $\Xi_+ \in B$.  Since the exit measure of $B$ is
  positive, the events $E_n$ still happen infinitely often almost surely
  after conditioning.  Let $(v_i)_{i \geq 0}$ be a path from $\rho$ in $G$
  such that $z(v_i) \to \Xi_+ \in B$ and $h(v_i) \to 1$.  In
  particular,
  \[
  \inf\{h(v_i) : i\geq1\} > 0.
  \]
  The path $(\dots,X_{-2},X_{-1},\rho,v_1,v_2,\dots)$ disconnects $X_n$
  from at least one of the intervals $L$ or $R$ and so
  \begin{equation}
    P^G_{X_n}(\text{hit } \{\dots,X_{-1},\rho,v_1,\dots\})
    \geq \min(p^n_L,p^n_R)
  \end{equation}
  which is greater than $1/3$ infinitely often almost surely.  We stress
  that the expression refers to the probability that an independent random
  walk started from $X_n$ hits the path $(\dots,X_{-1},\rho,v_1,\dots)$,
  and that this bound holds trivially if $X_n$ is on the path $(v_m)$.  By
  \cref{lem:Poissonlem},
  \begin{equation}
    P^G_{X_n}(\text{hit } \{\dots,X_{-1},\rho\}) \xrightarrow{a.s.} 0,
  \end{equation}
  and hence $P^G_{X_n}(\text{hit } \{v_1,v_2,\dots\}) > 1/4$ infinitely
  often almost surely (see \cref{fig:poisson}).  Note that, since the
  choice of $v_i$ could depend on the whole trajectory of $X$, we have
  \emph{not} shown that $X$ hits the path $(v_i)$ infinitely often.
  % If we liked, we could choose a path $(v_i)$ to ensure this does not
  % happen, for example the path corresponding to the right hand outer
  % vertex boundary of $X$.
  Nevertheless, by \eqref{eq:ostcor}, almost surely infinitely often
  \begin{equation}
    h(X_n) > \frac{1}{4} \inf\{h(v_i) : i\geq1\} > 0.
  \end{equation}
  By L\'{e}vy's 0-1 law, $\lim_{n \to \infty} h(X_n) = 1$ almost surely as
  desired.
\end{proof}

%%%%%%%%%%%%%%%%%%%%%%%%%%%%%%%%%%%%%%%%%%%%%%%%%%%%%%%%%%%%%%%%%%%%%%%%%
\section{Hyperbolic speed and decay of radii}
\label{sec:exp_decay}

We now use the fact that the exit measure is almost surely non-atomic to
strengthen \cref{lem:exp_decay} and deduce that the limit rate of decay of
the Euclidean radii along the random walk exists.  The key idea is to use a
circle packing in the upper half-plane normalised by the limits of two
independent random walks.

Fix some circle packing $\cC$ in $\D$, so that, by \cref{thm:boundary}, the
limit points $\Xi_\pm=\lim_{n \to \pm\infty}z(X_n)$ exist and are
distinct almost surely.  Let $\Phi_X$ be a M\"obius transformation that
maps $\D$ to the upper half-plane $\H$ and sends $\Xi_+$ to 0 and
$\Xi_-$ to $\infty$.  We consider the upper half-plane packing
$\widehat\cC = \Phi_X(\cC)$.

Similarly to the proof of non-atomicity in \cref{sec:exitmeasure}, we now
have two boundary points $0$ and $\infty$ fixed by the graph $G$ and the path
$(X_n)$, so that the resulting circle packing is unique up to scaling.
Now, however, the packing depends on both $G$ and the random walk, so that
this new situation is not paradoxical (as it was in \cref{sec:exitmeasure}
where we ruled out the possibility that the exit measure has a single
atom).

\begin{proof}[Proof of \cref{thm:exp_decay}]
  We prove the equivalent statement for $(G,\rho)$ reversible with
  $\E[\deg(\rho)]<\infty$, and may assume that $(G,\rho)$ is ergodic.  We
  fix a circle packing $\widehat\cC=\Phi_X(\cC)$ in $\H$ as above, with the
  doubly infinite random walk from $\infty$ to $0$.  Let $\hat{r}(v)$ be
  the Euclidean radius of the circle corresponding to $v$ in $\widehat\cC$.
  The ratio of radii $\hat{r}(X_n)/\hat{r}(X_{n-1})$ does not depend on the
  choice of $\widehat\cC$, so these ratios form a stationary ergodic
  sequence.  By the Sharp Ring Lemma, $\E \big[
  |\log(\hat{r}(X_1)/\hat{r}(\rho))| \big] \leq C \E\big[\deg(\rho)\big] <
  \infty$, so that the Ergodic Theorem implies that
  \begin{equation}
    -\frac{1}{n}\log \frac{\hat{r}(X_n)}{\hat{r}(\rho)} =
    -\frac{1}{n}\sum_1^n \log \frac{\hat{r}(X_i)}{\hat{r}(X_{i-1})}
    \xrightarrow[n\to\infty]{a.s.}
    - \E\left[\log \frac{\hat{r}(X_1)}{\hat{r}(\rho)}\right].
  \end{equation}
%  It follows that for any choice of $\hat\cC$ we have $\lim \frac{-\log
%    \hat{r}(X_n)}{n} = -\E \left[\log \frac{\hat{r}(X_1)}{\hat{r}(\rho)}
%  \right]$.

  Now, since $\widehat\cC$ is the image of $\cC$ through the M\"obius map
  $\Phi_X$, and since $\Phi_X$ is conformal at $\Xi_+$,
  \begin{equation}
    \frac{\hat{r}(X_n)}{r(X_n)} \to
    \big|\Phi_X'\left(\Xi_+\right)\big| > 0.
  \end{equation}
  Therefore
  \begin{equation}
    \lim \frac{-\log r(X_n)}{n} = \E\left[- \log
      \frac{\hat{r}(X_1)}{\hat{r}(\rho)} \right]
  \end{equation}
  and by \cref{lem:exp_decay} this limit must be positive.  This
  establishes the rate of decay of the radii.

  \medskip

  Next, we relate this to the distance of $z(X_n)$ from $\partial\D$.  By
  the triangle inequality, $1-|z(X_n)|$ is at most the length of the path
  formed by drawing straight lines between the Euclidean centres of the
  circles along the random walk path starting at $X_n$:
  \[
  1 - |z_h(X_n)| \leq 1-|z(X_n)| \leq \sum_{i\geq n} 2r(X_i).
  \]
  Since the radii decay exponentially, taking the limits of the logarithms,
  \begin{equation}
    \label{eq:speed1}
    \liminf \frac{-\log\big(1-|z_h(X_n)|\big)}{n}
    \geq \lim \frac{-\log r(X_n)}{n}.
  \end{equation}

  To get a corresponding upper bound, note that, since every circle
  neighbouring $X_n$ is contained in the open unit disc, $1-|z_h(X_n)|$ is at
  least the radius of the smallest neighbour of $X_n$.  Applying the Sharp
  Ring Lemma, we have
  \[
  1-|z_h(X_n)| \geq r(X_n) \exp(-C\deg(X_n)).
  \]
  Taking logarithms and passing to the limit,
  \begin{align}
    \limsup \frac{-\log\big(1-|z_h(X_n)|\big)}{n}
    &\leq \lim\frac{-\log r(X_n)}{n} + \lim\frac{C\deg(X_n)}{n} \nonumber\\
    &= \lim \frac{-\log r(X_n)}{n}, \label{eq:speed2}
  \end{align}
  where the almost sure limit $\deg(X_n)/n \to 0$ follows from
  $\E[\deg(\rho)]<\infty$ and Borel-Cantelli.  Combining \eqref{eq:speed1}
  and \eqref{eq:speed2} gives the  almost sure limit
  \[
  \lim \frac{-\log\big(1-|z_h(X_n)|\big)}{n}= \lim \frac{-\log r(X_n)}{n}.
  \]
  Finally, to relate this to the speed in the hyperbolic metric, recall that
  distances from the origin in the hyperbolic metric are given by
  \[ d_{hyp}(0,z) = 2 \tanh^{-1}|z|\]
  and hence
  \begin{align*}
    \lim \frac{1}{n} d_{hyp}(z_h(\rho),z_h(X_n))
    &= \lim \frac{1}{n} d_{hyp}(0,z_h(X_n)) \\
    & = \lim\frac{2}{n} \tanh^{-1}|z_h(X_n)| \\
    & = \lim -\frac{1}{n}\log (1-|z_h(X_n)|).  \qedhere
  \end{align*}
\end{proof}

%\note{if the expected edge length is finite, the sub-additive ergodic
%  theorem gives a.s. existence of the hyperbolic limit speed. This
%  eliminates the extra moment assumption on $\deg(\rho)$ in this part of
%  the proof.}

%%%%%%%%%%%%%%%%%%%%%%%%%%%%%%%%%%%%%%%%%%%%%%%%%%%%%%%%%%%%%%%%%
\section{Extensions}
\label{sec:extensions}

We now discuss two basic extensions of our main results beyond simple
triangulations.  These are to weighted and to non-simple triangulations.
The latter are of particular interest since the PSHT is not simple.
Some of our results hold for much more general planar maps, which are
treated in \cite{unimodular2}.

\paragraph{Weighted networks.}
%One natural extension of our results concerns weighted triangulations.
Suppose $(G,\rho,w)$ is a unimodular random rooted weighted
triangulation.  As in the unweighted case, if $\E[w(\rho)]$ is finite then
biasing by $w(\rho)$ gives an equivalent random rooted weighted
triangulation which is reversible for the weighted simple random walk
\cite[Theorem 4.1]{AL07}.  Our arguments generalise with no change to
recover all our main results in the weighted setting provided the following
conditions are satisfied.
\begin{enumerate}
\item $\E[w(\rho)]<\infty$.  This allows us to bias to get a reversible
  random rooted weighted triangulation.
\item $\E[w(\rho)\deg(\rho)]<\infty$.  After biasing by $w(\rho)$, the
  expected degree is finite, allowing us to apply the Ring Lemma together
  with the Ergodic Theorem as in the proofs of \cref{lem:exp_decay} and
  \cref{thm:exp_decay}.
\item A version of \cref{thm:BLS} holds. That is, there exists a
  percolation $\omega$ such that the induced network $\bar \omega$ has
  positive Cheeger constant almost surely. Two natural situations in which
  this occurs are
\begin{enumerate}
\item when all the weights are non-zero almost surely.
  In this situation, we may adapt the proof of \cref{thm:BLS} by first
  deleting all edges of weight less than $1/M$ and all vertices of total
  weight greater than $M$ before continuing the construction as before.
\item when the subgraph formed by the edges of non-zero weight is connected
  and is itself invariantly non-amenable.  This occurs when we circle pack
  planar maps that are not triangulations by adding edges of weight $0$ in
  non-triangular faces to triangulate them.
\end{enumerate}
\end{enumerate}

\paragraph{Non-simple triangulations.}
Suppose $G$ is a one-ended planar map.  The endpoints of any double edge or
loop in $G$ disconnect $G$ into connected components exactly one of which
is infinite.  The \textbf{simple core} of $G$, denoted $\core(G)$, is
defined by deleting the finite component contained within each double edge
or loop of $G$ before gluing the double edges together or deleting the loop
as appropriate.  See \cref{fig:core} for an example, and \cite{UIPT1} for a
more detailed description.  When $G$ is a triangulation, so is its core.
The core can be seen as a subgraph of $G$, with some vertices removed, and
multiple edges replaced by a single edge.  The induced random walk on the
core, is therefore a random walk on a weighted simple triangulation.

\begin{figure}
  \centering
  \includegraphics[width=.8\textwidth]{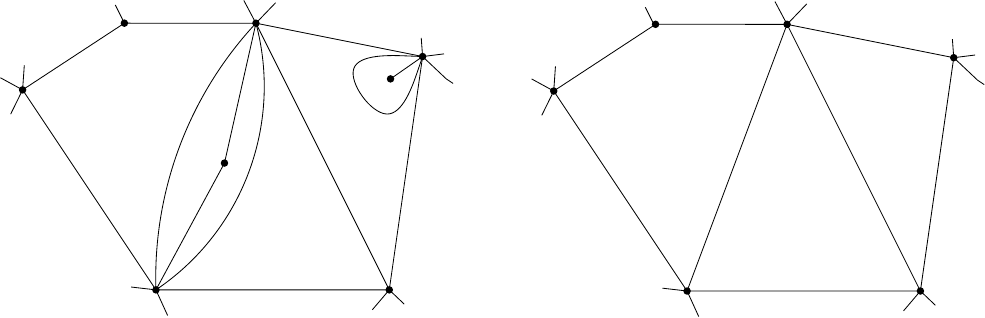}
  \caption{Extracting the core of a non-simple map. Left: part of a
    map. Right: corresponding part of its core.}
  \label{fig:core}
\end{figure}

In general, it is possible that all of $G$ is deleted by this procedure,
but in this case there are infinitely many disjoint vertex cut-sets of size
2 separating each vertex from infinity, implying that $G$ is VEL parabolic
and hence invariantly amenable.  When $G$ is invariantly non-amenable, the
conclusions of \cref{thm:boundary} hold with the necessary modifications.

\begin{thm}\label{thm:coreboundary}
  Let $(G,\rho)$ be an invariantly non-amenable, one-ended, unimodular
  random rooted planar triangulation with $\E[\deg^2(\rho)]<\infty$.  Then
  $\core(G)$ is CP hyperbolic.  Let
  $\cC$ be a circle packing of $\core(G)$ in $\D$, and let $(Y_n)_{n\in\N}$ be
  the induced random walk on $\core(G)$.  The following hold conditional on
  $(G,\rho)$ almost surely:
  \begin{enumerate}\itemsep0em
  \item $z(Y_n)$ and $z_h(Y_n)$ both converge to a (random) point denoted $\Xi
    \in\partial \D$,
  \item The law of $\Xi$ has full support and no atoms.
  \item $\partial \D$ is a realisation of the Poisson boundary of $G$. That
    is, for every bounded harmonic function $h$ on $G$ there exists a
    bounded measurable function $g: \partial \D \to \R$ such that
    \[
    h(v) = E^G_v \big[ g(\Xi)\big].
    \]
  \end{enumerate}
\end{thm}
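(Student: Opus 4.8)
The plan is to reduce everything to the \emph{simple core} $\core(G)$, which by the discussion preceding the statement is an infinite one-ended simple triangulation whenever $G$ is invariantly non-amenable (the degenerate case in which the core is empty forces infinitely many disjoint $2$-cuts and hence invariant amenability). Viewing the induced walk $(Y_n)$ as a random walk on $\core(G)$ equipped with the conductances $w(u,v) = \deg_G(u)\,P^G_u(\text{first core vertex visited after time }0\text{ is }v)$, the core becomes a unimodular random rooted weighted simple triangulation --- an equivariant factor of $(G,\rho)$, up to the standard change of root measure accounting for the possibility that $\rho$ lies in a deleted bubble. Since $(Y_n)$ is the $G$-walk observed along its visits to the core, these weights are symmetric by reversibility and the total weight at a vertex is $w(u) = \deg_G(u)$. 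I would then verify the three hypotheses of the weighted extension of \cref{thm:boundary} recorded in \cref{sec:extensions} and read off the conclusions.

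The first two hypotheses are immediate from the moment assumption. Condition (1) asks $\E[w(\rho)] < \infty$; since $w(\rho) = \deg_G(\rho)$ this is just $\E[\deg(\rho)] < \infty$. Condition (2) asks $\E[w(\rho)\deg_{\core}(\rho)] < \infty$, and because every core vertex satisfies $\deg_{\core}(\rho) \le \deg_G(\rho)$ we have $w(\rho)\deg_{\core}(\rho) \le \deg_G(\rho)^2$, so this follows from $\E[\deg^2(\rho)] < \infty$. This is precisely where the second moment hypothesis enters.

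The heart of the argument, and the step I expect to be the main obstacle, is the third hypothesis: that $\core(G)$ is CP hyperbolic and admits the percolation of \cref{thm:BLS}. Here I would invoke \cref{thm:classification} directly for the core, which is a unimodular, one-ended, simple triangulation with $\E[\deg_{\core}(\rho)] < \infty$: it is either invariantly amenable and CP parabolic, or invariantly non-amenable and CP hyperbolic. To rule out the first alternative I would argue by contradiction. If $\core(G)$ were invariantly amenable, take finite percolations $\omega_k$ on the core with $\E[\deg_{\omega_k}(\rho)] \to \E[\deg_{\core}(\rho)]$, and extend each to a finite percolation $\omega_k'$ on $G$ by restoring, for every included core vertex, all of the finite bubbles hanging from it; the bubbles are finite, so the clusters of $\omega_k'$ remain finite. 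As $k \to \infty$ the root and its core-edges are retained with probability tending to one, so restoring the bubbles forces $\E[\deg_{\omega_k'}(\rho)] \to \E[\deg_G(\rho)]$, whence $\alpha(G) = \E[\deg_G(\rho)]$ and $G$ is invariantly amenable --- contradicting the hypothesis. Hence $\core(G)$ is invariantly non-amenable and CP hyperbolic. Because every core edge carries positive conductance (a single $G$-step across it already contributes) and the core is connected, the nonzero-weight subgraph is the whole invariantly non-amenable core, so situation (b) of \cref{sec:extensions} supplies the required analogue of \cref{thm:BLS}. The delicate point throughout is that the bubbles, although finite, need not be of bounded size, so the comparison of percolations across the core surgery must be carried out at the level of expected degrees (via the mass transport principle) rather than by any uniform geometric estimate.

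With the three hypotheses in place, the weighted versions of \cref{thm:convergence} and \cref{thm:exitmeasure} apply verbatim to the packing $\cC$ of $\core(G)$ and the induced walk $(Y_n)$, giving parts (1) and (2). For part (3) I would transfer the Poisson boundary from the core back to $G$ using the standard fact that a time change leaves the Poisson boundary unchanged: restriction sends a bounded $G$-harmonic function to a bounded harmonic function for the induced chain on $\core(G)$, and the harmonic extension $h(v) = E^G_v[\tilde h(Y_0)]$ inverts it, so the two spaces of bounded harmonic functions are canonically isomorphic. The weighted \cref{thm:Poisson} identifies the bounded harmonic functions of the induced chain with $L^\infty(\partial\D)$ via $\Xi = \lim z(Y_n)$, and composing the two isomorphisms yields $h(v) = E^G_v[g(\Xi)]$ for every bounded harmonic $h$ on $G$. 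This final transfer is routine; the genuine work is the inheritance of invariant non-amenability by the core described above.
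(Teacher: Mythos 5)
Your proposal is correct and follows the same overall architecture as the paper's sketch: pass to the weighted simple core with the induced-walk conductances, check the three conditions of the weighted extension in \cref{sec:extensions}, apply the weighted form of \cref{thm:boundary} to get parts (1) and (2), and transfer the Poisson boundary back to $G$ via the restriction/extension correspondence given by optional stopping. The one place you diverge is in establishing that $\core(G)$ is CP hyperbolic. The paper's primary route is a one-line monotonicity observation: $G$ is VEL hyperbolic because it is invariantly non-amenable, and since every path to infinity in $G$ must pass through the core, the vertex extremal length from a core vertex to $\infty$ is the same in $G$ and in $\core(G)$, so the core is VEL hyperbolic and hence CP hyperbolic by He--Schramm. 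You instead prove that invariant non-amenability is inherited by the core, by extending finite percolations on the core to finite percolations on $G$ through restoring the bubbles and comparing $\alpha(\core(G))$ with $\alpha(G)$ via the mass transport principle; the paper explicitly flags this as a valid alternative but does not carry it out. Your route costs a somewhat longer argument (and requires care with the root-measure change between $G$ and the core, which you rightly note), but it buys something the paper's VEL argument does not directly deliver: invariant non-amenability of the core itself, which is exactly what is needed to invoke situation (b) of \cref{sec:extensions} and hence the analogue of \cref{thm:BLS} for the weighted core. In that sense your write-up is actually more self-contained on the hypotheses of the weighted \cref{thm:boundary} than the paper's sketch, which passes over this point silently.
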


Since the additional components needed to prove this are straightforward,
we omit some of the details.

\begin{proof}[Sketch of proof]
  First, $(\core(G),\rho)$ is unimodular when sampled conditional on
  $\rho \in \core(G)$: essentially, a mass transport on $\core(G)$ gives a
  mass transport on $G$ which is $0$ for all deleted vertices. The mass
  transport principle for $G$ implies the principle for $\core(G)$.

  Second, $\core(G)$ is CP hyperbolic. Since $(G,\rho)$ is invariantly
  non-amenable, it is VEL hyperbolic (see the proof of
  \cref{thm:classification}).  Because the infimum over paths in the
  definition of the vertex extremal length is the same as the infimum over
  paths in the core, the vertex extremal length from $v\in \core(G)$ to
  $\infty$ is the same in $G$ and $\core(G)$. (Alternatively, one could
  deduce non-amenability of $\core(G)$ from non-amenability of $G$, and
  apply \cref{thm:classification}.)

  Now, since $\core(G)$ is a weighted CP hyperbolic unimodular simple
  triangulation (and the second moment of the degree of the root is
  finite), by \cref{thm:boundary} the random walk on $\core(G)$ converges
  to a point in the boundary, the exit measure has full support and no
  atoms, and $\partial\D$ is a realisation of the Poisson boundary of
  $\core(G)$.

  Finally, by the Optional Stopping Theorem, the bounded harmonic functions
  on $\core(G)$ are in one-to-one correspondence with the
  bounded harmonic functions on $G$ by restriction and extension:
  \begin{equation*}
    h_G \mapsto h_{\core(G)} = h_G|_{\core(G)}, \quad
    h_{\core(G)} \mapsto h_G(v) = E_v^G[h_{\core(G)}(X_{N_0})].
  \end{equation*}
  Thus, the realisation of $\partial \D$ as the Poisson boundary of
  $\core(G)$ extends to $G$.
\end{proof}

%%%%%%%%%%%%%%%%%%%%%%%%%%%%%%%%%%%%%%%%%%%%%%%%%%%%%%%%%%%%%%%%%%%%%%%%%%%
\section{Open Problems}
\label{sec:problems}

\begin{problem}
  Can the identification of the Poisson and geometric boundaries be
  strengthened to an identification of the Martin boundary? This was
  done in \cite{ABGN14} for CP hyperbolic triangulations with bounded
  degrees.
  Specifically, we believe the following.
\end{problem}

\begin{conjecture}
  Let $(G,\rho)$ be an infinite simple, one-ended, CP hyperbolic unimodular
  random rooted planar triangulation with $\E[\deg(\rho)]<\infty$, and let
  $\cC$ be a circle packing of $G$ in the unit disc. Then almost surely for
  every point $\xi\in \partial \D$ there exists a unique positive harmonic
  function $h_\xi$ on $G$ such that $h_\xi(\rho)=1$ and $h_\xi$ is bounded
  on $\{v : |z(v)-\xi|\geq \eps\}$ for every $\eps>0$. Moreover, the
  function $\xi\mapsto h_\xi$ almost surely extends to a homeomorphism from
  $z(V)\cup\partial \D$ to the Martin compactification of $G$.
  % Then the Martin
  % boundary of $G$ is almost surely homeomorphic to either
  % \begin{enumerate} \itemsep0em
  % \item A point if $(G,\rho)$ is invariantly amenable, or
  % \item The circle $\partial \D$ if $M$ is invariantly non-amenable and
    % one-ended.
  % \item The space of ends of $M$ if $M$ is invariantly non-amenable and
    % infinitely-ended.
  % \end{enumerate}
  % In case (2), the homeomorphism should arise from circle packing a
  % suitable triangulation associated to $M$.
\end{conjecture}

% \begin{problem}[Fatou-type Theorem]
% Does a Fatou-type Theorem hold under the assumptions of \cref{thm:boundary}? That is, whenever $f:\partial\D\to\R$ is a bounded function with harmonic extension $h=E_v^G[f(\Xi)]$, does it hold almost surely that for almost every $\xi\in\partial \D$ (with respect to the exit measure), and every sequennce of vertices of $G$ such that $z(v_n)$ converges non-tangentially to $\xi$, we have
% \[\lim_{n\to\infty}h(v_n)=f(\xi)?\]
% \end{problem}

\begin{problem}[H\"older continuity of the exit measures]
  In the setting of \cref{thm:boundary}, do there
  exist positive constants $c$ and $C$ such that
  \[
  P_\rho^G( \Xi \in I) \leq C |I|^c
  \]
  for every interval $I \subset \partial
  \D$?
\end{problem}

\begin{problem}[Dirichlet energy of $z$]
  In the bounded degree case, by applying the main theorem of \cite{ALP99}, convergence to the boundary may be shown by
  observing that the Dirichlet energy of the centres function $z$ is finite:
  \[
  \mathcal{E}(z) = \sum_{u \sim v} (z(u)-z(v))^2
  \leq \sum 2\deg(v)r(v)^2 \leq 2\max\{\deg(v)\}.
  \]
  Is the Dirichlet energy of $z$ almost surely finite for a unimodular
  random rooted CP hyperbolic triangulation? This may provide a route to
  weakening the moment assumption in our results.
\end{problem}

\begin{problem}[Other embeddings]
  How does the canonical embedding of the Poisson-Delaunay triangulation
  differ from the embedding given by the circle packing? Is there a circle
  packing so that $d_{hyp}(v, z_h(v))$ is stationary?

  The \textbf{conformal embedding} of a triangulation is defined by
  forming a Riemann surface by gluing equilateral triangles according to
  the combinatorics of the triangulation before mapping the resulting surface
  conformally to $\D$ or $\C$.  Is it possible to control the large scale
  distortion between the conformal embedding and the circle packing?  In
  general the answer is no, but in the unimodular case there is hope.

  Regardless of the answer to this question, our methods should extend
  without too much difficulty to establish analogues of \cref{thm:boundary}
  for these other embeddings, the main obstacle being to show almost sure
  convergence of the random walk to a point in the boundary $\partial\D$.
\end{problem}

\begin{problem}
  Reduce the moment assumption on $\deg(\rho)$ in
  \cref{thm:boundary,thm:exp_decay}.  Finite expectation is needed to
  switch to a reversible distribution on rooted maps, but perhaps the
  second moment is not needed.
\end{problem}
%
%\begin{problem}\label{Q:unimod}
%  Is every unimodular hyperbolic planar triangulation a Benjamini-Schramm
%  limit of finite (random) maps? If so, the finite maps will necessarily
%  have genus linear in their size.  This is a special case of particular
%  interest of the more general question of whether every unimodular graph
%  is a Benjamini-Schramm limit of finite graphs (see \cite{AL07}).
%\end{problem}
%
%\note{comment on a positive answer in the dichotomy paper?}
%% Answer: no, we already did so in the introduction.

%%%%%%%%%%%%%%%%%%%%%%%%%%%%%%%%%%%%%%%%%%%%%%%%%%%%%%%%%%%%%%%%%%
\subsection*{Acknowledgments}
OA is supported in part by NSERC.  AN is supported by the Israel Science
Foundation grant 1207/15 as well as NSERC and NSF grants.  GR is supported
in part by the Engineering and Physical Sciences Research Council under
grant EP/103372X/1.

All circle packings above were generated using Ken Stephenson's CirclePack
software \cite{CP}.  We thank Ken for his assistance using this software and
for useful conversations.  We also thank the referee for their comments and
suggestions.

{\small
\bibliographystyle{abbrv}
\bibliography{unimodular}
}

\end{document}